\documentclass[11pt]{article}
\usepackage{amssymb,latexsym,amsmath,amsbsy,amsthm,amsxtra,amsgen,graphicx,dsfont,color,authblk}
\oddsidemargin=0in
\evensidemargin=0in
\topmargin=-.4in
\textheight=134ex
\textwidth=42em
\baselineskip=16pt

\newfont{\msbm}{msbm10 at 11pt}

\theoremstyle{plain}
\newtheorem{theorem}{Theorem}[section]                                          
\newtheorem{proposition}[theorem]{Proposition}                          
\newtheorem{lemma}[theorem]{Lemma}
\newtheorem{corollary}[theorem]{Corollary}

\theoremstyle{definition}

\theoremstyle{remark}
\newtheorem{remark}[theorem]{Remark}

\def\XbT{X_{1,I_1}}

\def\barYbT{\bar{Y}_{1,I_1}}
\def\YbT{Y_{1,I_1}}
\def\tilYbT{\tilde{Y}_{1,I_1}}

\def\tilMYsur{\tilde{M}_{1,s}}

\def\taukmut{\tau_{k}}
\def\XbTk{X_{k,I_k}}
\def\YbTk{Y_{k,I_k}}
\def\barYbTk{\bar{Y}_{k,I_k}}
\def\tilYbTk{\tilde{Y}_{k,I_k}}

\def\XtoaN{X_1^{(1)}}

\begin{document}

\title{The Accumulation of Beneficial Mutations and Convergence to a Poisson Process}
\author[1]{Nantawat Udomchatpitak\thanks{This work (Grant No. RGNS 64-155) was supported by Office of the Permanent Secretary, Ministry of Higher Education, Science, Research and Innovation  (OPS MHESI), Thailand Science Research and Innovation (TSRI) and Mahidol University.}}
\author[2]{Jason Schweinsberg}
\affil[1]{\fontsize{9}{10.8}\itshape{Department of Mathematics, Mahidol University}}
\affil[2]{\fontsize{9}{10.8}\itshape{Department of Mathematics, University of California San Diego}}
\date{}
\maketitle

\begin{abstract}
We consider a model of a population with fixed size $N$, which is subjected to an unlimited supply of beneficial mutations at a constant rate $\mu_N$.  Individuals with $k$ beneficial mutations have the fitness $(1+s_N)^k$. Each individual dies at rate 1 and is replaced by a random individual chosen with probability proportional to its fitness.  We show that when $\mu_N \ll 1/(N \log N)$ and $N^{-\eta} \ll s_N \ll 1$ for some $\eta < 1$, the fixation times of beneficial mutations, after a time scaling, converge to the times of a Poisson process, even though for some choices of $s_N$ and $\mu_N$ satisfying these conditions, there will sometimes be multiple beneficial mutations with distinct origins in the population, competing against each other.

\end{abstract}

{\small MSC: Primary 92D15; Secondary 60J27, 60J80, 92D25

Keywords: Population model, mutation, selection, Poisson process}

\section{Introduction}

One of the most important questions in evolutionary biology is to understand how beneficial mutations accumulate in a population.  We consider here a simple model of a population which repeatedly acquires beneficial mutations.  We assume the population has fixed size $N$.  We assume that, at time zero, no individuals have mutations, but then each individual in the population independently acquires mutations at times of a homogeneous Poisson process with rate $\mu_N$.  All mutations are assumed to be beneficial and to increase the individual's fitness by a factor of $1 + s_N$, so that an individual with $k$ mutations, which we call a type $k$ individual, has fitness $(1 + s_N)^k$.  We assume that each individual independently lives for an exponentially distributed time with rate $1$.  When an individual dies, it gets replaced by a new individual whose parent is chosen at random from the $N$ individuals in the population, with probability proportional to the individual's fitness.  The new individual inherits all of its parent's mutations.

It is instructive to consider what happens after one individual acquires a beneficial mutation, if we assume that no further mutations can occur.  As we will explain in more detail below, the number of individuals with the mutation then evolves like a birth and death chain in which the ratio of the birth rate to the death rate is $1 + s_N$.  Classical results on asymmetric random walks imply that the probability that this chain reaches $N$ before $0$ is 
\begin{equation*}
	\frac{s_N}{(1 + s_N)(1 - (1 + s_N)^{-N})},
\end{equation*} 
which is approximately $s_N/(1+ s_N)$ as long as $(1 + s_N)^N \rightarrow \infty$ as $N \rightarrow \infty$.  Therefore, the beneficial mutation may quickly disappear, but with probability approximately $s_N/(1 + s_N)$, the beneficial mutation will spread to the entire population, an event known as a selective sweep.  One can also show that the duration of a selective sweep, that is, the time required for a beneficial mutation to spread to the entire population, is approximately $(2/s_N) \log N$.  This question was first investigated by Kimura and Ohta \cite{ko69}, and a rigorous analysis for a population model very similar to the one presented here is given in section 6.1 of \cite{durr08}.

Returning now to original population model, because there are $N$ individuals acquiring mutations at rate $\mu_N$, the total mutation rate for the population is $N \mu_N$.  Therefore, the rate of mutations which trigger a selective sweep is approximately $N \mu_N s_N/(1 + s_N)$.  It follows that the expected time between such mutations is approximately $(1 + s_N)/(N \mu_N s_N)$.  Therefore, the time between selective sweeps is much longer than the duration of a selective sweep provided that $\mu_N \ll 1/(N \log N)$.  As a result, when $\mu_N \ll 1/(N \log N)$, we expect to have approximately exponentially distributed waiting times between selective sweeps, so that after a suitable rescaling of time, the times of selective sweeps converge as $N \rightarrow \infty$ to the times of a homogeneous Poisson process.

When $s_N$ is bounded away from zero as $N \rightarrow \infty$, which is the case of strong selection, this result is straightforward to prove because with high probability, there will only be one beneficial mutation in the population at any given time that has not already spread to the entire population.  As a result, the selective sweeps can be analyzed individually.  However, when $s_N \rightarrow 0$ as $N \rightarrow \infty$, the analysis can become more complicated.  To understand why, suppose that at some time, all individuals have type zero, and then a beneficial mutation occurs which triggers a selective sweep of duration approximately $(2/s_N) \log N$.  Then the expected number of additional mutations which arise during the sweep will be approximately $(2/s_N) N \mu_N \log N$, which will tend to infinity if $s_N/(N \log N) \ll \mu_N \ll 1/(N \log N)$.  Some of these new mutations will cause type 0 individuals to become type 1, making them indistinguishable in the model from descendants of the original mutation.  Other mutations will cause type 1 individuals to become type 2, and then depending on the choice of parameters, these type 2 individuals could pick up an additional mutation and become type 3, and so on.  The presence in the population of multiple beneficial mutations with distinct origins leads to what is known as clonal interference and complicates the analysis significantly.  Nevertheless, because these additional mutations ultimately die out before spreading to a large number of individuals, one can prove that selective sweeps occur approximately at times of a Poisson process for a range of values of $s_N$ which includes the case of moderate selection, where $s_N = N^{-b}$ for some $b \in (0,1)$.

Given two sequences of positive numbers $(a_N)_{N=1}^{\infty}$ and $(b_N)_{N=1}^{\infty}$, we write $a_N \ll b_N$ if $\lim_{N \rightarrow \infty} a_N/b_N = 0$ and $a_N \sim b_N$ if $\lim_{N \rightarrow \infty} a_N/b_N = 1$.  Throughout the paper, we will assume that
\begin{equation}\label{muN}
	0<\mu_N \ll \frac{1}{N \log N}
\end{equation}
and that
\begin{equation}\label{sN}
	N^{-\eta} \ll s_N \ll 1 \quad\textup{for some }\eta < 1.
\end{equation}
Let $X_{k,N}(t)$ be the number of individuals at time $t$ with exactly $k$ mutations, that is, the number of type $k$ individuals.  Let $T_{0,N} = 0$, and let 
\begin{equation*}
	T_{k,N} = \inf\bigg\{t \geq 0: X_{k,N}(t) > \frac{\log N}{s_N} \bigg\}.
\end{equation*}  
Also, let 
\begin{equation*}
	\Delta= \left\lfloor \frac{1}{1 - \eta} \right\rfloor+1.
\end{equation*}
The following theorem is the main result of this paper.

\begin{theorem}\label{mainth}
	Assume that \eqref{muN} and \eqref{sN} hold.  Let $(\xi_k)_{k=1}^{\infty}$ be a sequence of independent random variables having the exponential distribution with mean one.  Then for each fixed positive integer $K$, as $N \rightarrow \infty$ we have the convergence in distribution
	\begin{equation}\label{PPPconvergence}
		\big(N \mu_N s_N(T_{k,N} - T_{k-1,N}) \big)_{k=1}^K \Rightarrow (\xi_k)_{k=1}^K.
	\end{equation}
	Furthermore, there exist positive constants $C_1$ and $C_2$, depending on $\eta$, such that for all nonnegative integers $k$, we have
	\begin{align}\label{fastsweep}
\lim_{N \rightarrow \infty} P\bigg(X_{k,N}(t) \geq N - \frac{C_2 \log N}{s_N} \textup{ for all $t$ such that } T_{k,N} + \frac{C_1 \log N}{s_N} \leq t < T_{k+1, N} \bigg) = 1
	\end{align}
	and
	\begin{align}\label{width}
\lim_{N \rightarrow \infty} P \bigg( \sum_{j=k}^{k+\Delta} X_{j,N}(t) = N \textup{ for all $t$ such that } T_{k,N} + \frac{C_1 \log N}{s_N} \leq t < T_{k+1, N} \bigg) = 1.
	\end{align} 
\end{theorem}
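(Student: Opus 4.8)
The three assertions are intertwined, so the plan is to prove them together by induction on the \emph{epoch}, the $k$-th epoch being (roughly) the time interval $[T_{k-1,N},T_{k,N})$, with inductive hypothesis that at the start of the $k$-th epoch the population consists, apart from a cloud of total size $o(N)$, entirely of type-$(k-1)$ individuals (for $k=1$ this holds exactly at time $0$). The key tool in the induction step is a coupling of the size of any given type-$j$ clone, while it remains small, between two continuous-time branching processes whose per-capita birth rates straddle $1+s_N$ (death rate $1$); this is legitimate because, with type $k-1$ at frequency $1-o(1)$, the mean population fitness equals $(1+s_N)^{k-1}\big(1+O(\log N/(Ns_N))\big)=(1+s_N)^{k-1}(1+o(s_N))$ --- using $\log N\ll N^{1-\eta}$, which is \eqref{sN} --- so a type-$k$ individual has relative growth rate $s_N(1+o(1))$. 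Combining this with the birth-and-death computation recalled in the Introduction (and with $(1+s_N)^{s_N^{-1}\log N}\to\infty$ and $(1+s_N)^N\to\infty$, again from \eqref{sN}) yields: (i) a type-$k$ clone founded on such a background reaches size $s_N^{-1}\log N$ --- and from there, quasi-deterministically, fixation --- with probability $\tfrac{s_N}{1+s_N}(1+o(1))$, and otherwise dies out having never much exceeded size $s_N^{-1}$; (ii) the time for a successful clone to climb from $1$ to $s_N^{-1}\log N$ is $\Theta(s_N^{-1}\log N)$; (iii) the total size of all non-dominant clones stays $O(s_N^{-1}\log N)=o(N)$ throughout the epoch, which in particular closes the inductive control of the cloud.

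For \eqref{PPPconvergence}: during the epoch, type-$k$ clones are founded at total rate $\mu_N X_{k-1,N}(t)=N\mu_N(1+o(1))$, of which a fraction $\approx s_N$ is successful, so the successful foundings form, asymptotically, a Poisson process of intensity $N\mu_N s_N$ --- a thinning that is valid because distinct small clones do not interact before fixation is essentially complete. Hence $T_{k,N}$ equals $T_{k-1,N}$ plus an $\mathrm{Exp}(N\mu_N s_N)$ waiting time plus the $\Theta(s_N^{-1}\log N)$ climb time of the winning clone, and since $N\mu_N s_N\cdot s_N^{-1}\log N=N\mu_N\log N\to0$ by \eqref{muN}, that climb time (and any comparable offset used to say where the epoch ``really'' begins) is negligible after rescaling by $N\mu_N s_N$. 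One also checks that no successful type-$k$ clone is founded before type $k-1$ becomes dominant: the exposure $\int X_{k-1,N}(t)\,dt$ accumulated up to then is $O(s_N^{-2}\log N)$, so the expected number of such foundings is $O(\mu_N s_N\cdot s_N^{-2}\log N)=O(N^{\eta-1})\to0$. Therefore $N\mu_N s_N(T_{k,N}-T_{k-1,N})\Rightarrow\mathrm{Exp}(1)$, and the joint statement \eqref{PPPconvergence} with independent coordinates follows by applying the strong Markov property at the start of each epoch, where (with high probability) the state is the deterministic configuration ``everybody is type $k-1$'' up to an $o(N)$ cloud.

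For \eqref{fastsweep} and \eqref{width}: once type $k$ reaches $s_N^{-1}\log N$ at $T_{k,N}$ it is far above the drift scale $s_N^{-1}$, hence grows essentially deterministically at rate $s_N$ and reaches $N-s_N^{-1}\log N$ within time $C_1 s_N^{-1}\log N$ for a suitable $C_1$; thereafter the only possible challenger is a successful type-$(k+1)$ clone, which does not reach $s_N^{-1}\log N$ until $T_{k+1,N}$ and even then leaves type $k$ with at least $N-C_2 s_N^{-1}\log N$ individuals --- this is \eqref{fastsweep}. For \eqref{width} I must further show, on the stated interval, that every type below $k$ has disappeared --- a type-$j$ clone with $j<k$ and size $\le N$ is out-competed by the dominant type $k$ and dies within $O(s_N^{-1}\log N)$, absorbed by a large enough $C_1$ --- and that no type above $k+\Delta$ is present. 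For the latter, each type has size at most $s_N^{-1}\log N$ before it becomes dominant, and the epoch has length $O((N\mu_N s_N)^{-1})$; writing $n_j$ for the expected number of type-$(k+j)$ clones founded during the epoch, one gets $n_1=O(s_N^{-1})$ and, since the expected area (time-integrated size) contributed by type-$(k+j)$ individuals is $O(n_j s_N^{-1}\log N)$ and each produces type-$(k+j+1)$ mutations at rate $\mu_N$ times its size, a recursion $n_{j+1}=O(\mu_N s_N^{-1}\log N)\,n_j$, hence $n_j=N^{\,1-j(1-\eta)+o(1)}$. This tends to $0$ precisely when $j>1/(1-\eta)$, i.e.\ when $j\ge\Delta$, so $\sum_{j\ge\Delta}n_j\to0$; a union bound over this sum and over $k\le K$ then yields \eqref{width}.

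The main obstacle is making the branching-process couplings legitimate across a whole epoch, which can be very long --- of order $(N\mu_N s_N)^{-1}$, far larger than $s_N^{-1}\log N$. One must show the mean population fitness stays within a factor $1+o(s_N)$ of the dominant type's fitness for the entire epoch, which reduces to bounding the total size of all non-dominant clones uniformly in time; the useful observation is that, although the epoch is long, only $O(s_N^{-1})$ clones ever appear in it, so the error bookkeeping is over polynomially many clones and does not degrade however small $\mu_N$ is. The remaining ingredients --- that unsuccessful clones stay $O(s_N^{-1})$, that the finitely many simultaneously growing successful clones do not collide before fixation completes, and that all the $o(1)$ perturbations remain negligible once integrated over the epoch --- are where the quantitative estimates lie, after which the Poisson-process limit and the independence follow comparatively quickly.
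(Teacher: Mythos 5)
Your proposal follows essentially the same strategy as the paper: induction across epochs using the strong Markov property, sandwiching the focal type between branching processes whose per-capita birth/death ratios straddle $1+s_N$, a thinning argument for the Poisson limit, and a geometric recursion over higher types to identify the width $\Delta$.  The paper implements this by applying the strong Markov property at $T_k'$ (the first time type $k-1$ vanishes), which gives a cleanly characterized state -- $X_{k+j}(T_k')\le\beta_j$ for $1\le j\le\Delta-1$ and $X_{k+j}(T_k')=0$ otherwise, with $\beta_j=\theta^{1/2}\mu^{j-1}(\log N)^{2j-1}/s^j$ -- rather than at the vaguer ``start of the $k$-th epoch with an $o(N)$ cloud'' you use.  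This matters: to close the induction you must quantitatively control the leftover cloud (both which types it contains and their sizes), and the $\beta_j$/$\tau_j$ machinery, together with Lemma~\ref{totprog} for the time-integrated size of a clone conditioned on extinction, is what makes the bookkeeping uniform over the random start state. Your proposal acknowledges this as the ``main obstacle'' but leaves it as the hardest unfilled step.

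One concrete error: you claim that with type $k-1$ at frequency $1-o(1)$ the mean fitness is $(1+s_N)^{k-1}\bigl(1+O(\log N/(Ns_N))\bigr)=(1+s_N)^{k-1}(1+o(s_N))$ using only $\log N\ll N^{1-\eta}$.  But $\log N/(Ns_N)\ll s_N$ requires $\log N\ll Ns_N^{2}$, which fails when $N^{-\eta}\ll s_N\ll N^{-\eta/2}$ with $\eta\ge 1/2$, a regime the theorem covers.  The correct observation is that non-dominant types differ in fitness from the dominant type by only a factor $1+O(\Delta s_N)$, so their total contribution perturbs the mean fitness by a \emph{relative} amount $O(\Delta s_N\cdot \Delta\log N/(Ns_N))=O(\Delta^{2}\log N/N)$, which is $\ll s_N$ for every $\eta<1$.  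This is exactly how Lemma~\ref{gamma} in the paper extracts the lower bound $b_1(t)/d_1(t)\ge 1+\gamma s$ for any fixed $\gamma<1$: the error comes in as $O(\log N/N)$, not $O(\log N/(Ns))$.  Without this correction the whole branching-process straddle (and hence the $s_N/(1+s_N)$ survival probability, the exponential waiting time, and everything downstream) is not established for half the parameter range.

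Two smaller inaccuracies worth noting.  First, your epoch indexing is internally inconsistent (at $T_{k-1,N}$ the dominant type is $k-2$, not $k-1$; the paper's choice of $T_k'$ avoids this ambiguity).  Second, the claim that the exposure $\int X_{k-1,N}(t)\,dt$ accumulated before type $k-1$ becomes dominant is $O(s_N^{-2}\log N)$ is too small once the climb from $(\log N)/s_N$ to $\Theta(N)$ is included -- that phase contributes $\Theta(Ns_N^{-1}\log N)$ -- although the desired conclusion (no successful type-$k$ founding during the transient) still follows because $\mu_N s_N\cdot Ns_N^{-1}\log N=\mu_N N\log N\to 0$.  Similarly, the recursion $n_{j+1}=O(\mu_N s_N^{-1}\log N)\,n_j$ hides that the time-integrated size of a clone depends on whether any descendant sweeps, and is why the paper keeps the extra safety factors $\theta^{1/2}(\log N)^{1/2}$ in the $\beta_j$ and $\tau_j$ thresholds so that Markov-inequality losses at each level can be absorbed.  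All of this is fixable, but as written the proposal has a genuine gap in the core fitness estimate, plus loose bookkeeping where the paper's $T_k'$, $\beta_j$, $\tau_j$ structure supplies the missing precision.
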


We can think of $T_{k,N}$ as being approximately the time when type $k$ becomes established in the population.  The result \eqref{PPPconvergence} demonstrates that the times $T_{k,N}$, when scaled by $N \mu_N s_N$ which is approximately the rate at which selective sweeps take place, converge as $N \rightarrow \infty$ to the times of a homogeneous rate one Poisson process.  The result \eqref{fastsweep} shows that shortly after time $T_{k,N}$, most of the population consists of type $k$ individuals.  Furthermore, the result \eqref{width} shows that all individuals of types $k-1$ and lower disappear from the population shortly after time $T_{k,N}$, and then all individuals have types between $k$ and $k + \Delta$ until at least time $T_{k+1,N}$.

From this result, we obtain the following corollary regarding how the average number of mutations in the population evolves over time.

\begin{corollary}\label{maincor}
	Assume that \eqref{muN} and \eqref{sN} hold.  For all $t \geq 0$, let 
	\begin{equation*}
		\overline{X}_N(t) = \frac{1}{N} \sum_{k=0}^{\infty} k X_{k,N}(t)
	\end{equation*} 
	denote the average number of mutations carried by the $N$ individuals in the population at time $t$.  Then, as $N \rightarrow \infty$ the finite-dimensional distributions of the processes $(\overline{X}_N(t/(N \mu_N s_N)), t \geq 0)$ converge  to the finite-dimensional distributions of a homogeneous rate one Poisson process.
\end{corollary}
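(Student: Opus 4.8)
The plan is to show that the rescaled average $\overline{X}_N(t/(N\mu_N s_N))$ is, with probability tending to one, essentially equal to the number of rescaled establishment times not exceeding $t$. For $t \geq 0$ set
$$L_N(t) = \#\{k \geq 1 : N\mu_N s_N T_{k,N} \leq t\}, \qquad N(t) = \#\{k \geq 1 : \xi_1 + \cdots + \xi_k \leq t\},$$
so that $(N(t))_{t \geq 0}$ is a homogeneous rate one Poisson process. The first step is to deduce from \eqref{PPPconvergence} that the finite-dimensional distributions of $L_N$ converge to those of $N$. Fix $0 \leq t_1 < \cdots < t_m$. Summing increments, \eqref{PPPconvergence} gives $(N\mu_N s_N T_{k,N})_{k=1}^K \Rightarrow (\xi_1 + \cdots + \xi_k)_{k=1}^K$ for every fixed $K$; the map $(x_1, \ldots, x_K) \mapsto (\#\{k \leq K : x_k \leq t_i\})_{i=1}^m$ is continuous at every point with no coordinate equal to any $t_i$, which is an event of probability one under the limit law; and indices $k > K$ are irrelevant in the limit because $P(L_N(t_m) \geq K+1) = P(N\mu_N s_N T_{K+1,N} \leq t_m) \to P(\xi_1 + \cdots + \xi_{K+1} \leq t_m)$, and the latter tends to $0$ as $K \to \infty$. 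Hence $(L_N(t_i))_{i=1}^m \Rightarrow (N(t_i))_{i=1}^m$.

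The second step is to show that, with probability tending to one, $\max_{1 \leq i \leq m} \big| \overline{X}_N(t_i/(N\mu_N s_N)) - L_N(t_i)\big| \leq \Delta C_2 (\log N)/(N s_N)$. Fix $t = t_i > 0$ (for $t = 0$ both $\overline{X}_N$ and $N$ vanish). Since $L_N(t) \Rightarrow N(t)$ the family $\{L_N(t)\}_N$ is tight, so we may pick $M$ with $P(L_N(t) > M)$ as small as desired for all large $N$ and work on $\{L_N(t) \leq M\}$ intersected with the events in \eqref{fastsweep} and \eqref{width} for $k = 0, 1, \ldots, M$, each of which has probability tending to one. We also need the rescaled observation time to lie in $\big[T_{L_N(t),N} + C_1(\log N)/s_N, \; T_{L_N(t)+1,N}\big)$: the upper bound follows from the definition of $L_N(t)$, while the lower bound is equivalent to $t - N\mu_N s_N T_{L_N(t),N} \geq C_1 N\mu_N \log N$, which holds with probability tending to one because the right-hand side tends to $0$ by \eqref{muN}, while the left-hand side converges in distribution (by the first step) to the distance from $t$ back to the most recent point of $N$ at or before $t$, a limit that is almost surely strictly positive. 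On this event, writing $k = L_N(t)$, \eqref{width} forces every individual to have type in $\{k, \ldots, k+\Delta\}$ and \eqref{fastsweep} gives $\sum_{j=k+1}^{k+\Delta} X_{j,N}(t/(N\mu_N s_N)) = N - X_{k,N}(t/(N\mu_N s_N)) \leq C_2(\log N)/s_N$, so
$$\overline{X}_N\!\Big(\frac{t}{N\mu_N s_N}\Big) = k + \frac{1}{N}\sum_{j=k+1}^{k+\Delta}(j-k)\,X_{j,N}\!\Big(\frac{t}{N\mu_N s_N}\Big) \in \Big[k, \; k + \frac{\Delta C_2 \log N}{N s_N}\Big],$$
giving the claimed bound at $t = t_i$; intersecting over the finitely many $i$ preserves high probability.

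Finally, \eqref{sN} gives $N s_N \gg N^{1-\eta} \gg \log N$, so $\Delta C_2 (\log N)/(N s_N) \to 0$; hence $\overline{X}_N(t_i/(N\mu_N s_N)) - L_N(t_i) \to 0$ in probability jointly over $i$, and combining this with the first step and Slutsky's theorem yields $(\overline{X}_N(t_i/(N\mu_N s_N)))_{i=1}^m \Rightarrow (N(t_i))_{i=1}^m$, which is the corollary. I expect the main obstacle to be the bookkeeping in the second step, namely ensuring that the rescaled observation time falls strictly inside the post-settling window of the correct, random sweep index $L_N(t)$; this needs the tightness of $L_N$, the reduction of \eqref{fastsweep}--\eqref{width} to finitely many indices, and the fact that $N\mu_N \log N \to 0$, so that the settling time $C_1(\log N)/s_N$ becomes negligible after rescaling.
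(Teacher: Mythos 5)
Your proof is correct and follows essentially the same approach as the paper: compare $\overline{X}_N$ to the counting process $L_N$ of rescaled establishment times (the paper's $V_N$), derive fdd convergence of $L_N$ from \eqref{PPPconvergence}, and show that $\overline{X}_N$ and $L_N$ differ by $O((\log N)/(Ns_N))$ with high probability by locating each rescaled observation time in the settled window after the most recent sweep. The only cosmetic difference is that you argue directly from the stated conclusions \eqref{fastsweep} and \eqref{width} of Theorem \ref{mainth}, whereas the paper draws on the intermediate Lemma \ref{indlem} to bound $\overline{X}_N(u)$ on $[T_k', T_{k+1})$ and to rule out landing in $[T_k, T_k')$.
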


To give one indication of why these results are significant, we refer the reader to the award-winning papers \cite{gkwy16, bgpw19}, which provide a mathematical analysis of the results of the famous Lenski experiments on bacterial evolution.  In \cite{gkwy16}, the authors consider a model very similar to the one in the present paper and assume that $s_N \sim N^{-b}$ and $\mu_N \sim N^{-(1 + a)}$, where $0 < b < 1/2$ and $a > 3b$.  (In \cite{gkwy16}, the mutation rate is written as $\mu_N \sim N^{-a}$, but this is because $\mu_N$ in \cite{gkwy16} refers to the mutation rate for the entire population and therefore corresponds to $N \mu_N$ in the present paper.)  These restrictions on the parameters are chosen to eliminate all clonal interference on the time scale of interest with high probability.  Theorem~\ref{mainth} suggests that the same results may still hold if the condition $a > 3b$ is replaced by the weaker condition $a > 0$, which is sufficient to eliminate clonal interference among beneficial mutations that do not quickly die out.

Many papers have been devoted to analyzing this population model (or very similar models, perhaps with slightly different selection mechanisms) for different ranges of values for the parameters $\mu_N$ and $s_N$.  Much of this work has been carried out by statistical physicists and appears in the biology or physics literature; see, for example, \cite{brw08, df07, f13, grbhd, gwnd, mgfd21, nh13, rbw08}.  There is also a growing body of mathematically rigorous work on the subject.  The case when $\mu_N \sim C/(N \log N)$, where one begins to see overlaps between selective sweeps, was considered by Gerrish and Lenski in \cite{gl98} and has recently been studied rigorously in \cite{ghstw}.  Durrett and Mayberry \cite{dm11} studied the case in which $s_N$ is a constant and $\mu_N \sim N^{-a}$ for some $a \in (0, 1)$.  Schweinsberg \cite{sch17a, sch17b} studied slightly faster mutation rates, so that $\mu_N$ tends to zero more slowly than any power of $1/N$.  This work made rigorous the analysis in \cite{df07, dwf13}.  Rigorous results for the case in which both $s_N$ and $\mu_N$ are constants were established in \cite{yec10, k13}.  One can also consider the case in which the mutation rate is very fast, but the selective benefit resulting from each mutation is very small.  In this case, the fitness of a lineage over time is well approximated by Brownian motion.  This parameter regime was studied by Neher and Hallatschek \cite{nh13}.  A branching Brownian motion model that should serve as a good approximation to this population model was studied rigorously in \cite{rs20, ls21}.  Finally, we note that the case when both $s_N$ and $\mu_N$ are on the scale of $1/N$ can be studied using a diffusion approximation, as discussed, for example, in section 8.1 of \cite{durr08}.

The rest of this paper is devoted to proving Theorem \ref{mainth} and Corollary \ref{maincor}.  An important component of the proof will be a coupling between the population process and a branching process with immigration, which will allow us to bound the number of individuals with a given number of mutations from above and below by branching processes.  

\section{Transition rates for the population process}

For the rest of the paper, to lighten notation, we shall omit the subscript $N$ and simply write $\mu$, $s$, $X_k(t)$, and $T_k$ in place of $\mu_N$, $s_N$, $X_{k,N}(t)$, and $T_{k,N}$.  Nevertheless, it is important to keep in mind that these quantities do depend on $N$.

In this section, we work out the transition rates for the population process.  Let
\begin{equation}\label{S}
	S(t)=\sum_{k=0}^{\infty}(1+s)^kX_k(t),
\end{equation}
which is the total fitness of the population at time $t$.
Note that $S(t)\geq \sum_{k=0}^{\infty}X_k(t)=N$ for all $t\geq 0$.  
We need to consider two types of transitions in the population process:

\begin{enumerate}
	\item For every pair of non-negative integers $(i,j)$ with $j \notin \{i, i+1\}$, $X_i$ decreases by $1$ while $X_j$ increases by 1 when a type $i$ individual is replaced by a type $j$ individual.
	Hence, the rate at which $X_i$ decreases by $1$ while $X_j$ increases by 1 at time $t$ is
	\begin{equation*}
	X_i(t)\cdot\frac{(1+s)^j X_j(t)}{S(t)}
	\end{equation*}
	because type $i$ individuals die at rate $X_i(t)$, and the probability that the new individual born is type $j$ is $(1 + s)^jX_j(t)/S(t)$.
	\item For every non-negative integer $i$, $X_i$ decreases by $1$ while $X_{i+1}$ increases by 1 when a type $i$ individual is replaced by a type $i+1$ individual, or a type $i$ individual gains a new mutation and becomes a type $i+1$ individual. 
	Hence, the rate at which $X_i$ decreases by $1$ while $X_{i+1}$ increases by 1 at time $t$ is
	\begin{equation*}
	X_i(t)\cdot\frac{(1+s)^{i+1} X_{i+1}(t)}{S(t)}+X_i(t)\mu.
	\end{equation*}
\end{enumerate}
There are also events in which a type $i$ individual is replaced by another type $i$ individual, but we may ignore these events because they do not change the composition of the population.

From these transition rates, we can see that for $k\geq 0$, the process $(X_k(t),t\geq 0)$ can be viewed as a birth-death process with immigration having the following transition rates:
\begin{enumerate}
	\item An immigration event occurs when a type $k-1$ individual becomes a type $k$ individual by acquiring a new mutation, which occurs at rate
	\begin{equation}\label{m}
		m_k(t):=X_{k-1}(t)\mu.
	\end{equation}
	Note that immigration only occurs for $k\geq 1$.  We will call a type $k$ individual a type $k$ immigrant if it arises from a type $k-1$ individual who gains a new mutation. 
	
	\item A given type $k$ individual gives birth when an individual that is not of type $k$ is replaced by a new individual who chooses this type $k$ individual as its parent.  This event occurs at rate
	\begin{equation}\label{b}
		b_k(t):=(N-X_k(t))\cdot\frac{(1+s)^k}{S(t)}.
	\end{equation}
	
	\item A given type $k$ individual dies when it is replaced by an individual that is not of type $k$, or it gains a new beneficial mutation, which occurs at rate    
	\begin{equation}\label{d}
		d_k(t):=\left(1-\frac{(1+s)^kX_k(t)}{S(t)}\right)+\mu.
	\end{equation}
\end{enumerate} 
Note that when discussing births and deaths of type $k$ individuals, we are disregarding events in which a type $k$ individual is replaced in the population by another type $k$ individual.  Ignoring these birth and death events does not affect the distribution of types in the population but does alter the genealogy of the population.  For the rest of the paper, we will work with this modified genealogy to simplify the proof.  This affects what is meant when we consider, for example, the set of individuals that are descended from a particular type $k$ immigrant.  However, because all type $k$ individuals are indistinguishable in the population, this change does not affect the distribution of types and therefore does not affect whether or not Theorem \ref{mainth} holds.

\section{Structure of the induction argument}\label{Setup}

Define $T_0'=0$, and for each positive integer $k$, let 
\begin{equation*}
T_k' = \inf\left\{t\geq T_{k-1}' : X_{k-1}(t)=0\right\}.
\end{equation*}
Note that at time $T_k'$, all individuals of types $k-1$ or lower have disappeared from the population.			
Let
\begin{equation*}
\theta=N\mu \vee \frac{1}{Ns}.
\end{equation*}
It follows from \eqref{muN} and \eqref{sN} that
\begin{equation}\label{0log}
\lim_{N \rightarrow \infty} \theta\log N = \lim_{N \rightarrow \infty} (\mu N\log N) \vee \frac{\log N}{Ns} = 0.
	\end{equation}
For positive integers $k$, let 
\begin{equation*}
\beta_k = \frac{\theta^{1/2} \mu^{k-1} (\log N)^{2k-\frac{3}{2}}}{s^k}.
\end{equation*}

\begin{lemma}\label{lim1}
We have $\beta_1\ll 1/s$, and $\beta_{k}\ll \beta_{k-1}/(\log N)^{1/2}$ for all $k\geq 2$.           
\end{lemma}

\begin{proof}
	Note that $s \beta_1 = (\theta \log N)^{1/2} \rightarrow 0$ as $N \rightarrow \infty$ by \eqref{0log}.
	Also, for positive integers $k\geq 2$, it follows from \eqref{muN} and \eqref{sN} that
	\begin{equation*}
		\frac{\beta_{k-1}}{\beta_{k}(\log N)^{1/2}}=\frac{s}{\mu (\log N)^{5/2}}\gg \frac{N^{1-\eta}}{(\log N)^{3/2}}\gg 1.
	\end{equation*}
	Thus, $\beta_{k}\ll \beta_{k-1}/(\log N)^{1/2}$ for all $k\geq 2$.           
\end{proof}

The following lemma is the key to the proof of our main results.  Note that, although for the model described in the introduction, no individuals have mutations at time zero, we present the result here under a slightly more general initial condition, so that the lemma can be applied inductively.

\begin{lemma}\label{mainlem}
	Suppose $X_k(0) \leq \beta_k$ for $1 \leq k \leq \Delta - 1$ and $X_k(0) = 0$ for $k \geq \Delta$.  Then the following hold:
	\begin{enumerate}
		\item For all $c > 0$, we have 
		\begin{equation*}
		\lim_{N \rightarrow \infty} P(N \mu s T_1 > c) = e^{-c}.
		\end{equation*}
		
		\item There exists a positive constant $C$ such that 
		\begin{equation*}
		\lim_{N \rightarrow \infty} P\left(0 \leq T_1' - T_1 < \frac{C \log N}{s} \right) = 1.
		\end{equation*}
		
		\item We have 
		\begin{equation*}
		\lim_{N \rightarrow \infty} P(T_1' < T_k \mbox{ for all }k \geq 2) = 1.
		\end{equation*}
		
		\item We have 
		\begin{equation*}
		\lim_{N \rightarrow \infty} P(X_{\Delta + 1}(t) = 0 \mbox{ for all }t \in [0, T_1']) = 1.
		\end{equation*}
		
		\item For all positive integers $k$ such that $2 \leq k \leq \Delta$, we have 
		\begin{equation*}
		\lim_{N \rightarrow \infty} P(X_k(T_1') \leq \beta_{k-1}) = 1.
		\end{equation*}
	\end{enumerate}
\end{lemma}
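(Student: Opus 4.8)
The plan is to analyze the growth of type $1$ from its initial population $X_1(0) \leq \beta_1$ together with the immigrants arriving at rate $m_1(t) = X_0(t)\mu$, and to couple $(X_1(t))$ with branching processes with immigration from above and below. Before type $1$ becomes established, we have $X_0(t)$ close to $N$ (by part (4), types above $\Delta$ are absent, and the low-index types are small), so $S(t) \approx N$ and the per-capita birth rate $b_1(t) \approx (1+s)/S(t)\cdot(N - X_1(t))$ while the death rate is $d_1(t) \approx 1 + \mu - (1+s)X_1(t)/S(t)$; thus while $X_1(t) \ll N/s$ the ratio of birth to death rate is essentially $1+s$, and the immigration rate is essentially $N\mu$. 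So I would first establish a deterministic-type control showing that as long as $T_1$ has not occurred and $T_1'$ has not occurred, the environment seen by type $1$ stays in a good set with high probability; this uses that $\sum_{j\geq 2} X_j$ remains small, which is part (5) fed back in, and is handled by a bootstrap/stopping-time argument.

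Next, for a supercritical branching process with immigration where offspring mean is $1+s$ and immigration rate is $N\mu$, the process started from $O(\beta_1)$ individuals reaches level $(\log N)/s$ at a time $\tau$ with $N\mu s \tau$ converging to an exponential: each immigrant (or initial individual) founds a family that survives with probability $\sim s$, and a surviving family grows like $e^{st}$ times a unit-mean random variable (Kesten--Stigum / the standard $W$-limit), so it takes time $\sim (1/s)\log(\,\cdot / s\,)$ to reach size $(\log N)/s$. The first immigrant whose family survives arrives at rate $\sim N\mu s$, and once it survives, the additional $(1/s)\log N$ time to reach the threshold is negligible compared with the $1/(N\mu s)$ scale because $N\mu \ll 1/\log N$. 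Combining the upper and lower couplings and checking they agree to leading order gives part (1). For part (2), once $X_1$ hits $(\log N)/s$ it behaves like a supercritical chain that, conditioned to have reached that level, sweeps to fixation (i.e., $X_0$ hits $0$) in time $O((\log N)/s)$ with high probability — this is the standard selective-sweep duration estimate, run for type $1$ growing and type $0$ (and the small leftover low types) dying; a symmetric argument on $X_0$, which is now the disadvantaged type, gives the $O((\log N)/s)$ bound.

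Part (4) requires that no type $\Delta+1$ individual appears before $T_1'$: type $\Delta+1$ immigrants arrive from type $\Delta$ at rate $X_\Delta(t)\mu$, and type $\Delta$ itself only grows from type $\Delta-1$, etc.; iterating, over the time interval $[0,T_1']$ of length $O((\log N)/s) + O(1/(N\mu s))$ the expected number of type $\Delta+1$ individuals ever produced is of order $\beta_{\Delta+1}$-ish, which by the definition of $\Delta = \lfloor 1/(1-\eta)\rfloor + 1$ and $\beta_k = \theta^{1/2}\mu^{k-1}(\log N)^{2k-1}/s^k$ tends to $0$ — this is exactly why $\Delta$ is chosen as it is, and I would verify $\beta_{\Delta+1} \to 0$ from \eqref{muN} and \eqref{sN}. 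Part (5) is the analogous moment computation for types $2 \leq k \leq \Delta$: type $k$ at time $T_1'$ has been fed by immigration from type $k-1$ over a short time, so $E[X_k(T_1')]$ is of order $\mu \cdot (\text{time}) \cdot (\text{typical size of type } k-1) \lesssim \beta_{k-1}$, and a Markov inequality finishes it. The main obstacle will be step one: rigorously confining the ``environment'' $(X_0(t), S(t), \sum_{j\geq 2}X_j(t))$ to a good region throughout $[0,T_1']$ while simultaneously running the branching-process couplings, since the couplings are only valid on that good region and the good region is partly controlled by conclusions (4)–(5) — so the argument must be organized as a single induction on a carefully chosen stopping time, showing the process leaves the good region only with vanishing probability before $T_1'$.
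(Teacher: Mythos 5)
Your outline matches the paper's architecture well: bounding $X_1$ above and below by branching processes with immigration, extracting the exponential timing from the first immigrant whose family survives, a two-phase sweep analysis for $T_1'-T_1$ (growth to a macroscopic fraction $\alpha N$, then extinction of type $0$ via a subcritical coupling), and iterated bounds on the higher types inside a stopping-time ``good region.'' You also correctly identify the circularity problem and the need to organize everything around stopping times; the paper does exactly this via $T^{(1)} = T_2 \wedge \cdots \wedge T_\Delta \wedge \tau_{\Delta+1}$ and the chain $\tau_2 < \tau_3 < \cdots < \tau_{\Delta+1}$.

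However, the moment computation you propose for parts (4) and (5) has a genuine gap. For $k \geq 2$, the type-$k$ dynamics are \emph{supercritical} relative to the bulk, with per-individual growth rate approximately $(1+s)^k - 1 \approx ks$. Over the relevant time scale $T_1 \sim 1/(N\mu s)$, a single immigrant family can multiply by a factor on the order of $\exp\bigl(ks \cdot \tfrac{1}{N\mu s}\bigr) = \exp(k/(N\mu))$, which diverges. Consequently, unconditional moments such as $E\bigl[\int_0^{T_1'} X_k(t)\,dt\bigr]$ or $E[X_k(T_1')]$ are not of order $\beta_{k-1}$; your heuristic ``$\mu \times \text{time} \times \text{typical size of type }k-1$'' counts immigrants but ignores their subsequent branching, and unconditionally this blows up. The paper's resolution is essential here: first it shows (Lemma~\ref{PAk}) that because so few type-$k$ immigrants arrive before $\tau_k$, with high probability \emph{every} type-$k$ family ($k\geq 2$) goes extinct; then it uses the Gadag--Rajarshi decomposition into finite and infinite lines of descent (Remark~\ref{bprem}), under which, \emph{conditioned on extinction}, each family is effectively subcritical and $E\bigl[\int_0^\infty Y(t)\,dt \mid A\bigr] \leq 1/(ks)$ per immigrant. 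The moment estimates in Lemmas~\ref{PTk>taukm} and~\ref{PtaukmINC} are carried out with respect to this conditional law, which is what closes the iteration. Without the extinction conditioning, the Markov-inequality argument you sketch for parts (4) and (5) does not go through.
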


Part 1 of Lemma \ref{mainlem} shows that the number of type $1$ individuals reaches $(\log N)/s$ after a time which is approximately exponentially distributed with rate $N \mu s$.  Then part 2 of the lemma shows that the type 0 individuals completely disappear a short time later.  Parts 3 and 4 show that type 0 individuals disappear before the number of type $k$ individuals reaches $(\log N)/s$ for any $k \geq 2$, and before any individual acquires $\Delta + 1$ mutations.  Finally, part 5 of the lemma shows that at the time the type $0$ individuals disappear, there are at most $\beta_{k-1}$ individuals of type $k$ for $2 \leq k \leq \Delta$.

Sections \ref{Beginproof}, \ref{proofsec2}, and \ref{Endproof} are devoted to the proof of Lemma \ref{mainlem}.  In the rest of this section, we will show how to apply Lemma \ref{mainlem} inductively to obtain Lemma \ref{indlem} below, and then use Lemma~\ref{indlem} to obtain Theorem \ref{mainth} and Corollary \ref{maincor}.  Let $\mathcal{F}_t$ be the $\sigma$-field generated by the random variables $X_k(s)$ for nonnegative integers $k$ and $s \in [0, t]$, so that $(\mathcal{F}_t, t \geq 0)$ is the natural filtration associated with the population process.  Note that this filtration implicitly depends on $N$.

\begin{lemma}\label{indlem}
	For all nonnegative integers $m$, define $G_m$ to be the event that $X_{m+k}(T_m') \leq \beta_k$ for $1 \leq k \leq \Delta - 1$ and  $X_k(T_m') = 0$ for $k \notin \{m, m+1, \dots, m+\Delta-1\}$.  Then for all positive integers $m$, the following hold:
	\begin{enumerate}
		\item For all $c > 0$, we have 
		\begin{equation*}
		\lim_{N \rightarrow \infty} \big| P\big(N \mu s (T_m - T_{m-1}') > c \,|\, \mathcal{F}_{T_{m-1}'}\big) - e^{-c} \big| \mathds{1}_{G_{m-1}} = 0 \hspace{10pt} a.s.
		\end{equation*}
		
		\item There exists a positive constant $C$ such that 
		\begin{equation*}
		\lim_{N \rightarrow \infty} P\left(0 \leq T_m' - T_m < \frac{C \log N}{s} \right) = 0.
		\end{equation*}
		
		\item We have 
		\begin{equation*}
		\lim_{N \rightarrow \infty} P(T_m' < T_{m+k} \mbox{ for all }k \geq 1) = 1.
		\end{equation*}
		
		\item We have 
		\begin{equation*}
		\lim_{N \rightarrow \infty} P(X_{m+\Delta}(t) = 0 \mbox{ for all }t \in [T_{m-1}', T_m']) = 1.
		\end{equation*}
		
		\item We have 
		\begin{equation*}
		\lim_{N \rightarrow \infty} P(G_m) = 1.
		\end{equation*}
	\end{enumerate}
\end{lemma}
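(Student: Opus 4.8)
The plan is to prove Lemma~\ref{indlem} by induction on $m$, deducing each stage from Lemma~\ref{mainlem} via the strong Markov property at the stopping time $T_{m-1}'$. The mechanism is that on the event $G_{m-1}$, the configuration of the population at time $T_{m-1}'$, after relabelling each type $k$ as type $k-(m-1)$, satisfies exactly the hypotheses of Lemma~\ref{mainlem}: types below $m-1$ are absent, the numbers of types $m,\dots,m+\Delta-2$ are bounded by $\beta_1,\dots,\beta_{\Delta-1}$, and types $m+\Delta-1$ and higher are absent. The key structural observation is that the law of the type-composition process $(X_k(t))_{k\geq0}$ is invariant under a uniform downward shift of all type labels: the birth, death and immigration rates \eqref{m}--\eqref{d} depend on the configuration only through the quantities $(1+s)^kX_k(t)/S(t)$ and through differences of type indices, and shifting every label by a constant $c$ multiplies $S(t)$ by $(1+s)^{-c}$ and leaves all of these invariant. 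Moreover, a configuration supported on types $\geq a$ stays so forever, since a type below $a$ can be produced neither by immigration (which requires a parent one type lower) nor by birth (which requires an individual of that type). Hence, setting $\tilde X_k(t):=X_{k+m-1}(T_{m-1}'+t)$, on $G_{m-1}$ the process $(\tilde X_k(t))_{k\geq0,\,t\geq0}$ is, conditionally on $\mathcal F_{T_{m-1}'}$, a copy of the population process started from an $\mathcal F_{T_{m-1}'}$-measurable configuration $\tilde x$ satisfying the hypotheses of Lemma~\ref{mainlem}, and the stopping times transform according to $T_m=T_{m-1}'+\tilde T_1$, $T_m'=T_{m-1}'+\tilde T_1'$, and $T_{m+j}=T_{m-1}'+\tilde T_{1+j}$ for $j\geq1$, where $\tilde T_1,\tilde T_1',\tilde T_{1+j}$ are the corresponding quantities for $\tilde X$.

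For the base case $m=1$ we have $T_0'=0$ and the trivial $\sigma$-field $\mathcal F_0$, and $G_0$ holds deterministically for the initial condition of the model, so parts~1--4 of Lemma~\ref{indlem} with $m=1$ are precisely parts~1--4 of Lemma~\ref{mainlem} (using $[T_0',T_1']=[0,T_1']$ and the fact that $X_k\equiv0$ on $[0,T_1']$ for every $k\geq\Delta+1$, since $X_{\Delta+1}\equiv0$ prevents any type $\geq\Delta+2$ from ever appearing), and part~5 with $m=1$ follows from parts~4 and~5 of Lemma~\ref{mainlem} together with $X_0(T_1')=0$. For the inductive step, assume Lemma~\ref{indlem} holds for $m-1$; then $P(G_{m-1})\to1$ and $P(T_{m-1}'<\infty)\to1$ by the inductive hypothesis. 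For each of the five assertions for $m$, split according to whether $G_{m-1}$ occurs --- the contribution of $G_{m-1}^c$ being $o(1)$ --- and on $G_{m-1}$ condition on $\mathcal F_{T_{m-1}'}$ and apply the matching part of Lemma~\ref{mainlem} to $\tilde X$. For part~1, if $P_{\tilde x}$ denotes the law of the population process started from $\tilde x$, then on $G_{m-1}$ the conditional probability equals $P_{\tilde x}(N\mu s\,\tilde T_1>c)$, so $\bigl|P\bigl(N\mu s(T_m-T_{m-1}')>c \mid \mathcal F_{T_{m-1}'}\bigr)-e^{-c}\bigr|\,\1_{G_{m-1}}\le\sup_{\tilde x}\bigl|P_{\tilde x}(N\mu s\,\tilde T_1>c)-e^{-c}\bigr|$, the supremum being over all configurations $\tilde x$ satisfying the hypotheses of Lemma~\ref{mainlem}. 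Parts~2,~3, and~4 follow in the same manner from parts~2,~3, and~4 of Lemma~\ref{mainlem}. For part~5, on $G_{m-1}$ the event $G_m$ is exactly the event that $G_1$ holds for the shifted process $\tilde X$, which has conditional probability $1-o(1)$ given $\mathcal F_{T_{m-1}'}$ on $G_{m-1}$ by parts~4 and~5 of Lemma~\ref{mainlem} (part~4 ruling out types $\geq\Delta+1$ for $\tilde X$, equivalently types $\geq m+\Delta$ for $X$); combined with $P(G_{m-1})\to1$ this gives $P(G_m)\to1$.

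The one real obstacle is the supremum over $\tilde x$ above: Lemma~\ref{mainlem} is stated for a single initial configuration, whereas the induction requires the convergence in all five parts to hold \emph{uniformly} over configurations satisfying $X_k(0)\leq\beta_k$ for $1\leq k\leq\Delta-1$ and $X_k(0)=0$ for $k\geq\Delta$, because the configuration at time $T_{m-1}'$ is random. This should come from the proof rather than the statement of Lemma~\ref{mainlem}: the couplings of the $X_k$ with branching processes with immigration in Sections~\ref{Beginproof}--\ref{Endproof} use the initial data only through the bounds $X_k(0)\leq\beta_k$, so the error terms depend on $N$ but not on the particular configuration. I would accordingly record Lemma~\ref{mainlem} in the corresponding uniform form and invoke that version here. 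The remaining ingredients are routine: $T_{m-1}'$ is a stopping time for $(\mathcal F_t)_{t\geq0}$ and $\tilde x$ is $\mathcal F_{T_{m-1}'}$-measurable, so the strong Markov property applies, and on $\{T_{m-1}'=\infty\}$, which has probability $o(1)$ by the inductive hypothesis, all the displayed events may be discarded.
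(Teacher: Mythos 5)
Your proof is essentially the same as the paper's: induction on $m$, the observation that the transition rates \eqref{m}--\eqref{d} are invariant under a uniform downward shift of the type labels, and an application of the strong Markov property at $T_{m-1}'$ on the event $G_{m-1}$ to transfer Lemma~\ref{mainlem} to the shifted process. The one thing you add that the paper leaves implicit is the remark that the conclusions of Lemma~\ref{mainlem} must hold uniformly over all initial configurations satisfying its hypotheses, since the configuration at time $T_{m-1}'$ is random and $\mathcal F_{T_{m-1}'}$-measurable; your observation that the estimates in Sections~\ref{Beginproof}--\ref{Endproof} depend on the initial data only through the bounds $X_k(0)\le\beta_k$, and hence already deliver the uniform version, is exactly the right way to justify this, and it is a legitimate gap in the paper's stated proof rather than in your own.
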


\begin{proof}
	The result when $m = 1$ is equivalent to Lemma \ref{mainlem}.  Suppose $m$ is a positive integer, and the result holds up to $m$.  Then $\lim_{N \rightarrow \infty} P(G_m) = 1$, so we can work on the event $G_m$.  On the event $G_m$, every individual in the population must have at least $m$ mutations from time $T_m'$ onward.  
	For $k\geq m$ and $t\geq T_m'$, on the event $G_m$,
	\begin{align*}
	b_k(t) 
	=\frac{(1+s)^k(N-X_k(t))}{\sum_{j=m}^{\infty}(1+s)^jX_j(t)} =\frac{(1+s)^{k-m}(N-X_k(t))}{\sum_{i=0}^{\infty}(1+s)^iX_{i+m}(t)}
	\end{align*}
	and 
	\begin{equation*}
	d_k(t)=1-\frac{(1+s)^kX_k(t)}{S(t)}+\mu=1-\frac{(1+s)^{k-m}X_k(t)}{\sum_{i=0}^{\infty}(1+s)^iX_{i+m}(t)}+\mu.
	\end{equation*}
	We can see from these formulas that the rates would be unchanged if $m$ were subtracted from the type of each individual, which is a consequence of the fact that subtracting $m$ from the type of each individual multiplies the fitness of each individual by $(1 + s)^{-m}$, without changing the relative fitnesses of the individuals.  Therefore, we will shift the type of each individual down by $m$, so that type $k$ individuals are relabeled as type $k-m$.
	
	After this relabeling of the types, on the event $G_m$, the distribution of types at time $T_m'$ satisfies the same conditions as the distribution of types at time zero in Lemma \ref{mainlem}.  Therefore, we can apply the strong Markov property at time $T_m'$, and after accounting for the relabeling of types, the five conclusions in Lemma \ref{mainlem} are equivalent to the five conclusions in Lemma \ref{indlem} with $m+1$ in place of $m$.  Thus, the result holds for $m+1$, and the lemma follows by induction.
\end{proof}

\begin{proof}[Proof of Theorem \ref{mainth}]
	Fix a positive integer $K$.  It follows from part 5 of Lemma \ref{indlem} that 
	\begin{equation*}
	\lim_{N \rightarrow \infty} P(G_1\cap \dots \cap G_{K-1}) = 1.
	\end{equation*}
	Therefore, by part 1 of Lemma \ref{indlem}, if $c_1, \dots, c_K > 0$, then 
	\begin{equation*}
	\lim_{N \rightarrow \infty} P\big(N \mu s(T_k - T_{k-1}') > c_k \mbox{ for }k = 1, \dots, K\big) = \prod_{k=1}^K e^{-c_k}.
	\end{equation*}
	That is, we have $(N \mu s (T_k - T_{k-1}'))_{k=1}^K \Rightarrow (\xi_k)_{k=1}^K$.  Because part 2 of Lemma \ref{indlem} and \eqref{muN} imply that $N \mu s (T_{k-1}' - T_{k-1}) \rightarrow_p 0$ as $N \rightarrow \infty$ for $k = 1, \dots K$, the result \eqref{PPPconvergence} follows.
	
	Next, note that part 5 of Lemma \ref{indlem} implies that at time $T_k'$, with probability tending to one, all individuals have type at least $k$ and at most $k+ \Delta - 1$.  Parts 2 and 3 of Lemma \ref{indlem} imply that with probability tending to one, we have $T_k' < T_{k+1} \leq T_{k+1}' < T_{k+2} \leq \dots < T_{k + \Delta}$, so in particular before time $T_{k+1}$, the number of individuals of type $j$ is less than $(\log N)/s$ for $j \in \{k+1, \dots, k+\Delta\}$.  Part 4 of Lemma \ref{indlem} implies that with probability tending to one as $N \rightarrow \infty$, no individual of type $k + \Delta + 1$ appears before time $T_{k+1}'$.  Putting together these observations, we conclude that 
	\begin{equation*}
	\lim_{N \rightarrow \infty} P\bigg( X_k(t) \geq N - \frac{\Delta \log N}{s} \mbox{ for all }t\mbox{ such that }T_k' \leq t < T_{k+1} \bigg) = 1.
	\end{equation*}
	In view of part 2 of Lemma \ref{indlem}, the result \eqref{fastsweep} follows with $C_1 = \Delta$ and $C_2 = C$.  The result \eqref{width} also follows from this same reasoning.
\end{proof}

\begin{proof}[Proof of Corollary \ref{maincor}]
	For all $u \geq 0$, let $V_N(u) = \sup\{k: T_{k,N} \leq u\}$.  It follows from \eqref{PPPconvergence} that the finite-dimensional distributions of the processes $(V_N(t/(N \mu s)), t \geq 0)$ converge as $N \rightarrow \infty$ to the finite-dimensional distributions of a homogeneous rate one Poisson process.  Therefore, it suffices to show that for each fixed $t > 0$, we have
	\begin{equation}\label{VX}
		\left|V_N\left(\frac{t}{N \mu s} \right) - \overline{X}_N\left(\frac{t}{N \mu s} \right)\right| \rightarrow_p 0 \qquad\mbox{as }N \rightarrow \infty.
	\end{equation}
	
	By \eqref{PPPconvergence}, for any fixed $t > 0$ and any $\delta > 0$, 
	\begin{equation*}
	\limsup_{N \rightarrow \infty} P \left( \frac{t - \delta}{N \mu s} \leq T_k \leq \frac{t}{N \mu s} \: \mbox{ for some }k \right) \leq \delta.
	\end{equation*} 
	Since $(\log N)/s \ll 1/(N \mu s)$ by \eqref{muN}, it follows from part 2 of Lemma \ref{indlem} that for each fixed $t > 0$, we have
	\begin{equation}\label{TkTk'}
		\lim_{N \rightarrow \infty} P\left(T_k \leq \frac{t}{N \mu s} < T_k' \: \mbox{ for some }k\right) = 0.
	\end{equation}
	However, as long as, for all $u \in [T_k', T_{k+1})$, we have $X_{j}(u) < (\log N)/s$ for $j \in \{k+1, \dots, k+\Delta\}$ and $X_{j}(u) = 0$ for all $j < k$ and $j > k + \Delta$, an event which has probability tending to one as $N \rightarrow \infty$ by Lemma \ref{indlem}, we have
	\begin{equation}\label{meanbound}
		k \leq \overline{X}_N(u) \leq k + \frac{1}{N} \sum_{j=1}^{\Delta} j X_{k+j}(u) \leq k + \frac{\Delta(\Delta+1)}{2} \cdot \frac{\log N}{Ns}
	\end{equation}
	for all $u \in [T_k', T_{k+1})$. Because $(\log N)/(Ns) \rightarrow 0$ as $N \rightarrow \infty$ by \eqref{sN}, the result \eqref{VX} follows from \eqref{TkTk'} and \eqref{meanbound}.
\end{proof}

\section{Following the process until time $T_1$}\label{Beginproof}

We now begin to work towards the proof of Lemma \ref{mainlem}.  We will therefore assume that the initial condition satisfies $X_k(0) \leq \beta_k$ for $1 \leq k \leq \Delta - 1$ and $X_k(0) = 0$ for $k \geq \Delta$.  
In this section, we study the process between time zero and the time $T_1$ when the number of type~1 individuals reaches $(\log N)/s$.  By bounding the process $X_1$ from above and below by branching processes with immigration, we will show that $T_1$ is asymptotically exponentially distributed.  We will also bound the processes $X_k$ from above to show that the number of individuals of type $2$ or higher stays small until after time $T_1$.

\subsection{Bounding the process $X_k$ from above by a branching process}

For an interval $I \subseteq [0,\infty)$, we define $X_{k,I}(t)$ to be the number of type $k$ individuals alive at time $t$ who are type $k$ immigrants that appeared during the time interval $I$, or who descended from these type $k$ immigrants.  When $0 \in I$, descendants of type $k$ individuals that are in the population at time zero are included; recall that this matters because we are aiming to prove Lemma \ref{mainlem} under slightly more general initial conditions to facilitate the induction argument.  Recall also that when determining which individuals are descended from a particular immigrant, we are ignoring events in which a type $k$ individual is replaced by another type $k$ individual.  Recall the definitions of $m_k(t), b_k(t),$ and $d_k(t)$ in (\ref{m}), (\ref{b}), and (\ref{d}), respectively. For $t \geq 0$, define
\begin{equation}\label{m1^}
	\hat{m}_1(t)=\left(\frac{N\mu}{1-N^{\eta - 1}\log N}\right)d_1(t),
\end{equation}
and for each positive integer $k$, define  
\begin{equation}\label{bk^}
	\hat{b}_k(t)=(1+s)^kd_k(t).
\end{equation} 

\begin{lemma} \label{b^>b}
	The following statements hold.
	\begin{enumerate}
		\item For all positive integers $N$ and $k$, we have $\hat{b}_k(t)\geq b_k(t)$ for all $t\geq 0$.
		\item For sufficiently large $N$, we have $\hat{m}_1(t)\geq m_1(t)$ for all $t\in (0,T_1)$.
	\end{enumerate}
\end{lemma}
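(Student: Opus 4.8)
The plan is to reduce both inequalities to elementary pathwise facts about the total fitness $S(t)$ from \eqref{S}, using only $S(t) \geq N$ and, for part~2, the definition of $T_1$.

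For part~1, fix $N$, $k$, and $t \geq 0$. Since $(1+s)^k > 0$ and $S(t) > 0$, the inequality $\hat{b}_k(t) \geq b_k(t)$ is, after dividing \eqref{bk^} and \eqref{b} by $(1+s)^k$ and clearing the denominator $S(t)$, equivalent to $S(t) d_k(t) \geq N - X_k(t)$; substituting \eqref{d} and rearranging, this is
$$S(t) - N + \mu S(t) \geq \big((1+s)^k - 1\big) X_k(t).$$
The key observation is that from \eqref{S} we have $S(t) - N = \sum_{j \geq 0} \big((1+s)^j - 1\big) X_j(t)$, a sum of nonnegative terms, so the single term $j = k$ already yields $S(t) - N \geq ((1+s)^k - 1) X_k(t)$; since $\mu S(t) \geq 0$ the displayed inequality follows. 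This holds for every $N$, as required.

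For part~2, fix a large $N$ and $t \in (0, T_1)$. By the definition of $T_1$ we have $X_1(t) \leq (\log N)/s$, and since $S(t) \geq N$, \eqref{d} gives
$$d_1(t) \geq 1 - \frac{(1+s) X_1(t)}{S(t)} \geq 1 - \frac{(1+s) X_1(t)}{N} \geq 1 - \frac{(1+s)\log N}{Ns}.$$
By \eqref{sN} we have $N^{\eta} s \to \infty$, so $(1+s)\log N/(Ns) = \big((1+s)/(N^{\eta} s)\big) N^{\eta-1}\log N \leq N^{\eta-1}\log N$ once $N$ is large; also $N^{\eta-1}\log N \to 0$, so $1 - N^{\eta-1}\log N > 0$ for large $N$. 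Hence $0 < 1 - N^{\eta-1}\log N \leq d_1(t)$ for all $t \in (0, T_1)$, and using $X_0(t) \leq N$ together with \eqref{m1^},
$$\hat{m}_1(t) = \frac{N\mu}{1 - N^{\eta-1}\log N}\, d_1(t) \geq N\mu \geq X_0(t)\mu = m_1(t),$$
which is part~2.

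There is no serious obstacle: both statements are deterministic pathwise inequalities, and the inputs are just $S(t) \geq N$, the decomposition $S(t) - N = \sum_j((1+s)^j - 1) X_j(t)$, the bound $X_1(t) \leq (\log N)/s$ valid for $t < T_1$, and the asymptotic $N^{\eta}s \to \infty$ from \eqref{sN}. The only point requiring a little care is bookkeeping with signs --- in particular that the denominator $1 - N^{\eta-1}\log N$ in \eqref{m1^} is positive for large $N$, which is precisely what legitimizes the last chain of inequalities in part~2.
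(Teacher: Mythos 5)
Your proof is correct, and for part~2 it follows the paper's argument exactly (bounding $d_1(t) \geq 1 - N^{\eta-1}\log N$ on $(0,T_1)$ and then appealing to the definition of $\hat{m}_1$). For part~1 your rearrangement into $S(t) - N + \mu S(t) \geq ((1+s)^k-1)X_k(t)$ is a cosmetic variant of what the paper does — the paper writes $\hat{b}_k(t)\geq (1+s)^k(S(t)-(1+s)^kX_k(t))/S(t)$, expands $S(t)-(1+s)^kX_k(t)=\sum_{j\neq k}(1+s)^jX_j(t)$, and then uses $(1+s)^j\geq 1$; this is the same observation (dropping the $\mu$ term and using $(1+s)^j\geq1$) as your decomposition $S(t)-N=\sum_j((1+s)^j-1)X_j(t)$, just organized differently. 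Both are essentially the paper's proof.
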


\begin{proof}
	By (\ref{b}), (\ref{d}) and (\ref{bk^}), for every $t\geq 0$ and $k\geq 1$,
	\begin{align*}
		\hat{b}_k(t)
		&\geq (1+s)^k\left(1-\frac{(1+s)^kX_k(t)}{S(t)}\right)\\
		&=\frac{(1+s)^k\left(\sum_{i=0}^{k-1}(1+s)^iX_i(t)+\sum_{j=k+1}^{\infty}(1+s)^jX_j(t)\right)}{S(t)} \\
		&\geq \frac{(1+s)^k\left(\sum_{i=0}^{k-1}X_i(t)+\sum_{j=k+1}^{\infty}X_j(t)\right)}{S(t)} \\
		&=\frac{(1+s)^k(N-X_k(t))}{S(t)}\\
		&= b_k(t),
	\end{align*}
	which gives part 1 of the lemma.  To prove part 2, note that from (\ref{d}), when $t\in (0,T_1)$,
	\begin{equation*}
		d_1(t)\geq 1-\frac{(1+s)\log N}{S(t)s}+\mu\geq 1-\frac{(1+s)\log N}{Ns}.
	\end{equation*}
	Since $s \gg N^{-\eta}$, we have $d_1(t) \geq 1-N^{\eta - 1} \log N$ for sufficiently large $N$ and $t \in (0, T_1)$. The second part of the lemma now follows from (\ref{m1^}).
\end{proof}        

For positive integers $k$, let $M_k(t)$ be equal to $X_k(0)$ plus the number of times that a type $k-1$ individual mutates to type $k$ during the time interval $(0,t]$.
Define a sequence $(\gamma_N)_{N=1}^{\infty}$ such that
$$\frac{1}{(\log N)^{1/2}} \vee (\theta \log N)^{1/4} \ll \gamma_N \ll 1.$$
For $k \geq 2$, define the stopping time
\begin{align}
	\taukmut 
	&= \inf\left\{t\geq 0 : M_k(t)> \beta_{k-1} \gamma_N \right\} \label{taukmut}
\end{align}
By Lemma \ref{lim1}, we have $\beta_{k-1} \gamma_N \gg \beta_{k-1}/(\log N)^{1/2} \gg \beta_k$ for all $k \geq 2$.  Therefore, because we are assuming that $M_k(0) = X_k(0) \leq \beta_k$, for sufficiently large $N$ we have $\tau_k > 0$.

Let $I_1 = [0, T_1]$, and for $k \geq 2$, let $I_k = [0, \tau_k)$.  For all positive integers $k$, we now construct a new process $(\barYbTk(t),t\geq 0)$ from the population process as follows.  
\begin{enumerate}
	\item Set $\barYbTk(0)=0$.
	
	\item For $k = 1$ and $t \in (0, T_1)$, the process $\barYbT$ jumps up by 1 due to immigration at rate $\hat{m}_1(t)-m_1(t)$.  For $k \geq 2$, there is no immigration.
	
	\item For all $t > 0$, the process $\barYbTk$ jumps up by 1 due to births at rate \begin{equation*}
		\barYbTk(t)\hat{b}_k(t)+\XbTk(t)(\hat{b}_k(t)-b_k(t)).
	\end{equation*}
	
	\item For all $t >0$, the process $\barYbTk$ jumps down by 1 due to deaths at rate $\barYbTk(t)d_k(t)$. 
\end{enumerate}
Lemma \ref{b^>b} implies that the prescribed transition rates are nonnegative, so this process is well-defined.  Also, once the process hits 0, it cannot jump down. Thus, $\barYbTk(t)\geq 0$ for all $t\geq 0$.

One can carry out this construction formally by defining homogeneous rate one Poisson processes $(N_i(t), t \geq 0)$, $(N_{b,k}(t), t \geq 0)$, and $(N_{d,k}(t), t \geq 0)$ which are independent of one another and of the population process, and then defining the process $\barYbTk$ to satisfy
\begin{align*}
	\barYbTk(t) &= N_i \left( \int_0^{t \wedge T_1} \hat{m}_1(s)-m_1(s) \: ds \right) \mathds{1}_{\{k = 1\}} \\
	&\qquad + N_{b,k} \left( \int_0^t \barYbTk(s)\hat{b}_k(s)+\XbTk(t)(\hat{b}_k(s)-b_k(s)) \: ds \right) \\
	&\qquad \qquad- N_{d,k} \left( \int_0^t \barYbTk(s)d_k(s) \: ds \right).
\end{align*}
For other similar constructions in this paper, we will simply specify the jump rates without explicitly introducing the Poisson processes. 

For all $t \geq 0$, we define 
\begin{equation*}
	\YbTk(t)=\XbTk(t)+\barYbTk(t).
\end{equation*}
Therefore, $\YbTk(t)\geq\XbTk(t)$ for all $t\geq 0$.  Note that $\YbTk(t)$ is a birth-death process with immigration with the following rates:
\begin{enumerate}
	\item An immigrant appears in the process $\YbT$ when an immigrant appears in $\XbT$ or $\barYbT$.  Therefore, immigrants appear in $\YbT$ between times $0$ and $T_1$ at rate 
	\begin{equation*}
		m_1(t)+(\hat{m}_1(t)-m_1(t))=\hat{m}_1(t).
	\end{equation*}
	For $k \geq 2$, a immigrant appears in the process $\YbTk$ when an immigrant appears in $\XbTk$, which occurs only during the time interval $(0, \tau_k)$ at rate $m_k(t)$.
	
	\item For all $t \geq 0$, a birth occurs in the process $\YbTk$ at rate 
	\begin{equation*}
		\XbTk(t)b_k(t)+\barYbTk(t)\hat{b}_k(t)+\XbTk(t)(\hat{b}_k(t)-b_k(t)) =\YbTk(t)\hat{b}_k(t).
	\end{equation*}
	
	\item For all $t \geq 0$, a death occurs in the process $\YbTk$ at rate
	\begin{equation*}
		\XbTk(t)d_k(t)+\barYbTk(t)d_k(t)=\YbTk(t)d_k(t).
	\end{equation*}
\end{enumerate}

We shall scale the time so that each individual after the time scaling gives birth at rate $(1+s)^k$ and dies at rate $1$. 
For all positive integers $k$ and all $t\geq 0$, define
\begin{equation}\label{lambda1}
	\lambda_k(t)=\int_0^t d_k(v) \: dv,
\end{equation}
and define $\tilYbTk(t)=\YbTk(\lambda_k^{-1}(t))$.  Then the process $(\tilYbTk(t),t\geq 0)$ is a branching process with immigration with the following rates:
\begin{enumerate}
	\item When $k = 1$, immigration occurs at time $t\in (0,\lambda_1(T_1)]$ at rate
	\begin{equation*}
		\hat{m}_1(\lambda_1^{-1}(t))\cdot (\lambda_1^{-1})'(t) =\frac{\hat{m}_1(\lambda_1^{-1}(t))}{d_1(\lambda_1^{-1}(t))} =\frac{N\mu}{1-N^{\eta - 1}\log N}.
	\end{equation*}
	When $k \geq 2$, immigration occurs at a rate which is not constant in time and depends on how the population has evolved at earlier times.
	\item Each individual produces an offspring at the rate
	\begin{equation*}
		\hat{b}_k(\lambda_k^{-1}(t))\cdot (\lambda_k^{-1})'(t)=\frac{\hat{b}_k(\lambda_k^{-1}(t))}{d_k(\lambda_k^{-1}(t))} = (1+s)^k.
	\end{equation*}
	\item Each individual dies at the rate 
	\begin{equation*}
		d_k(\lambda_k^{-1}(t))\cdot (\lambda_k^{-1})'(t)=\frac{d_k(\lambda_k^{-1}(t))}{d_k(\lambda_k^{-1}(t))}=1.
	\end{equation*}
\end{enumerate}

\subsection{An upper bound on $P(T_1\leq \frac{c}{N\mu s})$}\label{upper1}

We first record the following elementary result about branching processes, which follows from classical results on asymmetric random walks.

\begin{lemma}\label{bplem}
	Consider a continuous-time branching process started from one individual in which each individual gives birth at rate $1+s$ and dies at rate $1$.  The probability that the branching process survives forever is $s/(1+s)$, and the probability that it goes extinct is $1/(1+s)$.
\end{lemma}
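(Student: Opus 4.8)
The plan is to pass to the embedded jump chain of the total population size and invoke the classical gambler's-ruin computation for an asymmetric nearest-neighbour random walk. Write $(Z(t),\, t \ge 0)$ for the number of individuals alive at time $t$, so that $Z(0) = 1$. Whenever $Z(t) = n \ge 1$, a birth occurs at total rate $(1+s)n$ and a death at total rate $n$; hence the next transition is a birth with probability $p := (1+s)/(2+s)$ and a death with probability $q := 1/(2+s)$, independently of $n$ and of the past. Consequently, if $J_0 = 1, J_1, J_2, \dots$ denote the successive values taken by $Z$, then $(J_m)_{m \ge 0}$ is, up to the first time it hits $0$, a random walk on the nonnegative integers that increases by $1$ with probability $p$ and decreases by $1$ with probability $q$, with $0$ absorbing; and the branching process goes extinct if and only if $(J_m)$ reaches $0$.

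Next I would recall the classical fact that such a walk started from $1$, with $p > q$, reaches $0$ with probability exactly $q/p$. One clean way to see this: $(q/p)^{J_m}$ is a bounded nonnegative martingale, and since $q/p < 1$ and $J_m \to \infty$ on the event that $0$ is never reached (the walk has positive drift), optional stopping at the hitting time $\tau_0$ of $0$ gives $q/p = (q/p)^{J_0} = P_1(\tau_0 < \infty)\cdot 1 + P_1(\tau_0 = \infty)\cdot 0 = P_1(\tau_0 < \infty)$. Substituting $p = (1+s)/(2+s)$ and $q = 1/(2+s)$ yields $q/p = 1/(1+s)$, so the extinction probability equals $1/(1+s)$.

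It remains only to note that the complementary event, on which $(J_m)$ never hits $0$, is precisely the event that the population is alive at all times, i.e.\ that the branching process survives forever (a birth rate linear in the population size precludes explosion, but in any case a population that grows without bound has not died out). Hence the survival probability is $1 - 1/(1+s) = s/(1+s)$. There is no real obstacle here; the only point deserving attention is the observation that the factor $n$ common to the birth and death rates cancels, which is exactly what makes the embedded jump chain a \emph{homogeneous} random walk and lets the classical gambler's-ruin identity be applied without modification.
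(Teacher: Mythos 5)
Your proposal is correct and follows exactly the route the paper indicates, namely reducing to the classical asymmetric random walk / gambler's ruin result via the embedded jump chain of the population size; the paper states only that the lemma "follows from classical results on asymmetric random walks" and leaves the details implicit, which you have filled in correctly (the martingale argument for $P_1(\tau_0<\infty)=q/p$, the cancellation of the factor $n$ in the transition probabilities, and the identification of non-extinction with survival).
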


Define $A_{1,X}$ and $A_{1,Y}$ to be the events that the processes $\XbT$ and $\YbT$ go extinct, respectively. 

\begin{lemma}\label{PA1Y}
	We have 
	\begin{equation*}
		\lim_{N\rightarrow\infty}P(A_{1,Y})=0.
	\end{equation*}
\end{lemma}

\begin{proof}
	On the event $A_{1,Y}$, all families of individuals at time $T_1$ must go extinct.  Since individuals in the branching process $\tilYbT$ give birth at rate $1+s$ and die at rate $1$, the extinction probability of each family is $1/(1+s)$.  Also, at time $T_1$, there are at least $(\log N)/s$ individuals in the process $\YbT$ because $X_1(T_1)=\XbT(T_1)\leq \YbT(T_1)$. Hence,
	\begin{equation*}
		P(A_{1,Y}) \leq \left(\frac{1}{1+s}\right)^\frac{\log N}{s} =\left((1+s)^{-1/s}\right)^{\log N}.
	\end{equation*}
	As $N\rightarrow \infty$, we have $s\rightarrow 0$ and $(1+s)^{-1/s}\rightarrow e^{-1}$, which completes the proof.
\end{proof}

\begin{lemma}\label{PT1>c}
	For every constant $c>0$, 
	\begin{equation*}
		\limsup_{N\rightarrow\infty}P\left(T_1\leq \frac{c}{N\mu s}\right) \leq 1-e^{-c}.
	\end{equation*}
\end{lemma}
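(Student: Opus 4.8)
The plan is to compare the true process $\XbT$ (counting type-1 individuals descended from type-1 immigrants in $I_1=[0,T_1]$) with the upper-bounding process $\YbT$ constructed above, and then use a first-moment estimate on $\YbT$ to show that it cannot reach $(\log N)/s$ by time $c/(N\mu s)$ except on an event of probability at most $1-e^{-c}+o(1)$. Since $X_1(T_1)=\XbT(T_1)\le\YbT(T_1)$, the event $\{T_1\le c/(N\mu s)\}$ forces $\YbT$ (or rather its time-changed version $\tilYbT$) to have reached $(\log N)/s$ before the corresponding rescaled time. The time change $\lambda_1$ essentially scales time by a factor $d_1(t)\to 1$ uniformly on $(0,T_1)$, so $\lambda_1(c/(N\mu s))$ is $(1+o(1))c/(N\mu s)$; I would first record this so that everything can be done for the cleaner process $\tilYbT$, which is a branching process with immigration in which each individual branches at rate $1+s$ and dies at rate $1$, and immigrants arrive at the constant rate $N\mu/(1-N^{\eta-1}\log N)$.

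Next I would compute $\E[\tilYbT(t)]$. For a branching process with immigration at constant rate $\rho$, branch rate $1+s$, death rate $1$, started from $\tilYbT(0)=0$, one has $\E[\tilYbT(t)]=\rho(e^{st}-1)/s$. Here $\rho=N\mu/(1-N^{\eta-1}\log N)\sim N\mu$, and at time $t\approx c/(N\mu s)$ this mean is $\sim (N\mu/s)(e^{c N\mu\,\cdot\,(1/(N\mu))}-1)$ — wait, one must be careful: $st = s\cdot c/(N\mu s)=c/(N\mu)$, which is enormous, so $e^{st}$ is huge and the mean is not small. So a pure first-moment bound on $\tilYbT$ itself is useless; instead I would condition on extinction versus survival of the immigrant families. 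Decompose $\tilYbT(t)$ according to immigrant families: up to rescaled time $t$ the number of immigrants is Poisson with mean $\rho t$, each immigrant independently founds a branching process of branch rate $1+s$, death rate $1$, which by Lemma~\ref{bplem} survives with probability $s/(1+s)$. The event $\{A_{1,Y}\text{ fails, i.e.\ some family survives}\}$ has, in the relevant time window, probability roughly $1-(1-s/(1+s))^{\rho t}=1-\exp(-\rho t s/(1+s)+o(1))$; with $t=c/(N\mu s)$ and $\rho\sim N\mu$ this is $1-e^{-c}+o(1)$. So with probability $\ge e^{-c}-o(1)$, \emph{every} immigrant family that has appeared by rescaled time $\lambda_1(c/(N\mu s))$ has already gone extinct by that time — and on that event $\tilYbT(\lambda_1(c/(N\mu s)))=0<(\log N)/s$, hence $T_1>c/(N\mu s)$.

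The details to nail down: (i) the "some family survives" probability is the relevant quantity — I need that, conditionally on the number of immigrants up to the relevant time being close to its mean, the families are i.i.d.\ with the stated survival probability, which is immediate from the branching-with-immigration structure of $\tilYbT$; (ii) I must handle the fact that the time window is $[0,c/(N\mu s)]$ in original time versus $[0,\lambda_1(c/(N\mu s))]$ in rescaled time — but since $\lambda_1(c/(N\mu s))/(c/(N\mu s))\to 1$, the Poisson mean $\rho\lambda_1(c/(N\mu s))\to c$ as well, and the exponent is continuous in this, so the bound passes through; (iii) one subtle point is that for an immigrant arriving at rescaled time $u<\lambda_1(c/(N\mu s))$, "its family has gone extinct by time $\lambda_1(c/(N\mu s))$" is implied by "its family goes extinct eventually" only if we also control late extinction, but actually we want the \emph{reverse} inclusion: $\{\tilYbT(\lambda_1(c/(N\mu s)))\ge(\log N)/s\}\subseteq\{\text{some family is alive at that time}\}\subseteq\{\text{some family survives forever}\}$, and it is this last event whose probability is $\le 1-e^{-c}+o(1)$; for the final inclusion I'd note that the probability a family is alive at a finite time but dies later is itself bounded by the survival probability $s/(1+s)$ per family, so it is cleanest to directly bound $P(\text{some family alive at that time})\le P(\text{some family survives})$, which holds trivially.

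The main obstacle is (iii): making sure the relevant event is correctly sandwiched so that the estimate $P(A_{1,Y}^c)\to 1-e^{-c}$ — essentially a restatement of the computation behind Lemma~\ref{PA1Y} but with the finite time horizon $c/(N\mu s)$ rather than $\infty$ — actually controls $P(T_1\le c/(N\mu s))$ from above. Concretely, I expect the clean statement to be: $\{T_1\le c/(N\mu s)\}\subseteq\{\XbT\text{ reaches }(\log N)/s\text{ by time }c/(N\mu s)\}\subseteq\{\YbT\text{ reaches }(\log N)/s\text{ by time }c/(N\mu s)\}\subseteq\{\text{at least one immigrant family in }\tilYbT\text{ that arrived by rescaled time }\lambda_1(c/(N\mu s))\text{ survives to }\infty\}$, where the last inclusion uses that $\YbT$ reaching a positive value requires some family to be currently alive, hence some family to be alive at a positive finite time, and then bounding "alive at a finite time" $\le$ "survives forever" is $\emph{false}$ — so instead I bound $P(\ge 1\text{ family alive at that time})$ directly via inclusion–exclusion / union bound against the mean number of surviving families, $\rho\lambda_1(c/(N\mu s))\cdot s/(1+s)\to c$, giving $P\le 1-e^{-c}+o(1)$ by a Poisson-thinning argument. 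That Poisson-thinning step — immigrants form a Poisson process, each independently "succeeds" (still alive at the horizon) with probability at most $s/(1+s)(1+o(1))$, so the count of successes is stochastically dominated by Poisson with mean $\to c$, hence $P(\text{count}\ge 1)\to 1-e^{-c}$ — is the crux, and I would carry it out carefully, after which the lemma follows.
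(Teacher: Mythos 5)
There is a genuine gap. Your chain of inclusions breaks at the last step, and the proposed Poisson-thinning fix targets the wrong event. Specifically, the implication ``if every immigrant family that appeared by rescaled time $\lambda_1(c/(N\mu s))$ has gone extinct by that time, then $T_1>c/(N\mu s)$'' is false: the process $\tilYbT$ can reach $(\log N)/s$ at an earlier time and then drop back to $0$ by the horizon, so knowing $\tilYbT(\lambda_1(c/(N\mu s)))=0$ does not rule out $T_1\leq c/(N\mu s)$. Equivalently, the inclusion you write, $\{\YbT\text{ reaches }(\log N)/s\text{ by time }c/(N\mu s)\}\subseteq\{\text{some family is alive at time }c/(N\mu s)\}$, is wrong, since the hitting of the level can have happened before the families collapse. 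Your Poisson-thinning estimate therefore bounds $P(\text{some family alive at the horizon})$, which is neither an upper nor a lower bound for the quantity you need, namely the probability that the running maximum of $\tilYbT$ exceeds $(\log N)/s$. A secondary issue is the claim that each immigrant ``succeeds (still alive at the horizon) with probability at most $s/(1+s)(1+o(1))$'': the survival probability to a finite horizon is strictly \emph{larger} than $s/(1+s)$ (and near $1$ for late arrivals), so this pointwise bound is false; the integrated mean does converge to $c$, but establishing that requires a computation you do not carry out.

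What repairs the argument, and what the paper does, is precisely the $A_{1,Y}$ decomposition you set aside. Write
$P(T_1\leq c/(N\mu s))\leq P(A_{1,Y})+P(A_{1,Y}^c\cap\{T_1\leq c/(N\mu s)\})$.
Lemma~\ref{PA1Y} kills the first term: if $T_1<\infty$ then at time $T_1$ there are more than $(\log N)/s$ individuals in $\YbT$, and the probability that all those lines die out is at most $(1+s)^{-(\log N)/s}\to 0$. On $A_{1,Y}^c$, some immigrant family survives forever; since all immigration into $\YbT$ happens during $[0,T_1]$, on the event $\{T_1\leq c/(N\mu s)\}$ that immortal immigrant appears by rescaled time $\lambda_1(T_1\wedge c/(N\mu s))\leq (1+\mu)c/(N\mu s)$. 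The key point is that counting \emph{immortal} immigrants, rather than families alive at a finite horizon, gives a Poisson process with the clean constant rate $\frac{N\mu}{1-N^{\eta-1}\log N}\cdot\frac{s}{1+s}$, so the probability that at least one has appeared by that rescaled time is at most $1-\exp\bigl(-\frac{(1+\mu)c}{(1-N^{\eta-1}\log N)(1+s)}\bigr)\to 1-e^{-c}$. This sidesteps both of your obstacles: no inclusion involving ``alive at a finite time'' is needed, and the thinning probability is exactly $s/(1+s)$ rather than a time-dependent quantity requiring an integral estimate.
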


\begin{proof}
	First, we show that 
	\begin{equation}\label{limsupPA1Yc}
		\limsup_{N\rightarrow\infty} P\left(A_{1,Y}^c\cap\left\{T_1\leq \frac{c}{N\mu s}\right\}\right)\leq 1-e^{-c}.
	\end{equation}
	For $t\geq 0$, let $\tilMYsur(t)$ be the number of immigrants in the process $\tilYbT$ that appear in the time interval $(0,t]$ and whose families do not go extinct.  In the process $\tilYbT$, immigrants appear at rate $N\mu/(1-N^{\eta - 1}\log N)$ until the time $\lambda_1(T_1)$.  The family of each immigrant has extinction probability $1/(1+s)$.  Hence, the first immigrant whose family does not go extinct appears at rate 
	\begin{equation*}
		\frac{N\mu}{1-N^{\eta - 1}\log N}\cdot \frac{s}{1+s}.
	\end{equation*} 
	Also, by (\ref{d}) and (\ref{lambda1}), 
	\begin{equation}\label{lambda1T1}
		\lambda_1\left(T_1\wedge \frac{c}{N\mu s}\right)=\int_0^{T_1\wedge \frac{c}{N\mu s}} d_1(v) \: dv \leq \int_0^{\frac{c}{N\mu s}} (1+\mu) \: dv = \frac{(1+\mu)c}{N\mu s}.
	\end{equation}
	Hence,
	\begin{align*}
P\left(\tilMYsur\Big(\lambda_1\Big(T_1\wedge\frac{c}{N\mu s}\Big)\Big)>0\right) &\leq 1-\exp \left(-\frac{N\mu}{1-N^{\eta - 1}\log N}\cdot \frac{s}{1+s}\cdot\frac{(1+\mu)c}{N\mu s}\right) \\
		&= 1-\exp \left(-\frac{(1+\mu)c}{(1-N^{\eta - 1}\log N)(1+s)}\right).
	\end{align*}
	It follows that
	\begin{equation}\label{eq1*}
		\limsup_{N\rightarrow\infty} P\left(\tilMYsur\Big(\lambda_1\Big(T_1\wedge\frac{c}{N\mu s}\Big)\Big)>0\right) \leq 1-e^{-c}.
	\end{equation}
	
	Next, let $A_0$ be the event that all families of individuals at time $0$ in the process $\tilYbT$ go extinct. By the same argument in the proof of Lemma \ref{PA1Y}, since $\tilYbT(0)=X_1(0)\leq\beta_1$, we have
	\begin{equation*}
		P(A_0)\geq \left(\frac{1}{1+s}\right)^{\beta_1}=\left((1+s)^{-1/s}\right)^{s\beta _1}.
	\end{equation*}
	By Lemma \ref{lim1}, $s\beta _1\rightarrow 0$ as $N\rightarrow \infty$. Moreover, $(1+s)^{-1/s} \rightarrow e^{-1}$ as $N\rightarrow \infty$. It follows that
	\begin{equation}\label{eq2*}
		\lim_{N\rightarrow\infty}P(A_0)=1.
	\end{equation}
	
	Now, note that $A_{1,Y}^c\cap\{T_1\leq \frac{c}{N\mu s}\}\subseteq\{\tilMYsur(\lambda_1(T_1\wedge\frac{c}{N\mu s}))>0\}\cup A_0^c$.  Therefore,
	\begin{equation*}
		P\left(A_{1,Y}^c\cap\left\{T_1\leq \frac{c}{N\mu s}\right\}\right)\leq P\left(\tilMYsur\Big(\lambda_1\Big(T_1\wedge\frac{c}{N\mu s}\Big)\Big)>0\right)+P(A_0^c).
	\end{equation*}
	We obtain the inequality (\ref{limsupPA1Yc}) by taking the $\limsup$ of both sides and using (\ref{eq1*}) and (\ref{eq2*}).  Lastly, note that 
	\begin{equation*}
		P\left(T_1\leq \frac{c}{N\mu s}\right) \leq P\left(A_{1,Y}^c\cap\left\{T_1\leq \frac{c}{N\mu s}\right\}\right) + P(A_{1,Y}).
	\end{equation*}
	Thus, the result of this lemma follows by Lemma \ref{PA1Y} and (\ref{limsupPA1Yc}).
\end{proof}

\subsection{Finite and infinite lines of descent}

In this subsection, we will use the fact that a branching process that is conditioned to go extinct is still a branching process.
Let $(Y(t),t\geq 0)$ be a branching process with $Y(0)=1$. 
Let $f(x)$ be the generating function of the offspring distribution of $Y$.  
Let $b^{-1}$ be the mean lifetime of an individual in the process $Y$. 
We define $u(x)=b(f(x)-x)$. 

An individual in the branching process $Y$ is said to have a finite line of descent if the family of this particular individual goes extinct; otherwise, it is said to have an infinite line of descent.  Let $Y^{(F)}(t)$ be the number of individuals at time $t$ that have a finite line of descent, and let $Y^{(I)}(t)$ be the number of individuals at time $t$ that have an infinite line of descent. Gadag and Rajarshi \cite{gr92} showed that $((Y^{(F)}(t),Y^{(I)}(t)),t\geq 0)$ is a two-type Markov branching process. 
Let $f^{(F)}(x,y)=\sum_{i=0}^\infty\sum_{j=0}^\infty p_{ij}^{(F)}x^iy^j$ where $p_{ij}^{(F)}$ is the probability that an individual with a finite line of descent has $i$ offspring with a finite line of descent and $j$ offspring with an infinite line of descent.
Let $f^{(I)}(x,y)=\sum_{i=0}^\infty\sum_{j=0}^\infty p_{ij}^{(I)}x^iy^j$ where $p_{ij}^{(I)}$ is the probability that an individual with an infinite line of descent has $i$ offspring with a finite line of descent and $j$ offspring with an infinite line of descent.    
Also, define $u^{(F)}(x,y)=b(f^{(F)}(x,y)-x)$ and $u^{(I)}(x,y)=b(f^{(F)}(x,y)-y)$. 
Gadag and Rajarshi \cite{gr92} also showed that
\begin{equation}\label{uF}
	u^{(F)}(x,y)=\frac{u(qx)}{q}
\end{equation}
and
\begin{equation}\label{uI}
	u^{(I)}(x,y)=\frac{u(qx+(1-q)y)-u(qx)}{1-q}
\end{equation}
where $q$ is the extinction probability of the branching process $Y$.

We will apply the following result to immigrant families in the branching process $\tilYbT$.

\begin{lemma}\label{totprog}
	Let $(Y(t), t \geq 0)$ be a continuous-time branching process with $Y(0) = 1$ such that each individual gives birth at rate $1+s$ and dies at rate $1$.  Let $A$ be the event that the process goes extinct.  Then 
	\begin{equation*}
		E \bigg[ \int_0^{\infty} Y(t) \: dt \Big| A \bigg] = \frac{1}{s}.
	\end{equation*}
\end{lemma}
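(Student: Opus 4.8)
Let $(Y(t), t \geq 0)$ be a continuous-time branching process with $Y(0) = 1$ such that each individual gives birth at rate $1+s$ and dies at rate $1$. Let $A$ be the event that the process goes extinct. Then $E[\int_0^\infty Y(t)\,dt \mid A] = 1/s$.

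\begin{proof}[Proof proposal]
The plan is to compute the expected total progeny (in continuous time, the integral $\int_0^\infty Y(t)\,dt$) of the branching process conditioned on extinction, by exploiting the fact, recalled just above, that a branching process conditioned on extinction is itself a branching process with an explicitly known offspring mechanism. Concretely, the extinction probability here is $q = 1/(1+s)$ by Lemma \ref{bplem}. The conditioned process $(Y(t) \mid A)$ is a Markov branching process whose individuals all have finite line of descent, so its law is governed by the function $u^{(F)}(x,y)$ from \eqref{uF}; setting $y = 0$ (there are no infinite-line-of-descent individuals under the conditioning) gives a one-type branching process with rate function $\tilde u(x) = u^{(F)}(x,0) = u(qx)/q$, where $u(x) = b(f(x)-x)$ with $b = 1+s+1 = 2+s$ the total event rate and $f$ the offspring generating function of $Y$. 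For the linear birth-death process at hand, an individual at an event time is replaced by two individuals with probability $(1+s)/(2+s)$ and by zero individuals with probability $1/(2+s)$, so $f(x) = \frac{1}{2+s} + \frac{1+s}{2+s}x^2$ and hence $u(x) = (1+s)x^2 - (2+s)x + 1$.

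The second step is to read off the birth and death rates of the conditioned process from $\tilde u(x) = u(qx)/q$. Writing $\tilde u(x) = \tilde b(\tilde f(x) - x)$, a short computation with $q = 1/(1+s)$ gives $\tilde u(x) = \frac{1}{1+s}x^2 - (2+s)x \cdot \frac{1}{1+s} \cdot (1+s) + \ldots$ — more carefully, substituting and simplifying shows that the conditioned process is again a linear birth-death process but with the birth rate and death rate swapped relative to the roles played by $1$ and $1+s$: it has birth rate $1$ and death rate $1+s$ (this is the familiar ``duality'' for conditioned supercritical linear birth-death processes, and it can be confirmed directly from the form of $\tilde u$, whose two roots are $x=1$ and $x = q^{-1}(1+s)^{-1}\cdot(\text{something})$, forcing the subcritical birth-death structure with ratio $1/(1+s)$).

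The final step is to compute $E[\int_0^\infty \tilde Y(t)\,dt]$ for a subcritical linear birth-death process $\tilde Y$ with $\tilde Y(0)=1$, birth rate $1$, death rate $1+s$. By linearity and the branching property, $h := E[\int_0^\infty \tilde Y(t)\,dt]$ satisfies a first-step decomposition: the first event occurs at rate $2+s$; with probability $1/(2+s)$ the individual dies (contributing $1/(2+s)$ expected area and then $0$), and with probability $(1+s)/(2+s)$ it splits into two, contributing $1/(2+s)$ plus $2h$. This gives $h = \frac{1}{2+s} + \frac{1+s}{2+s}\cdot 2h$, whose solution is $h = 1/s$, as claimed. (Equivalently, $E[\tilde Y(t)] = e^{-st}$, so $h = \int_0^\infty e^{-st}\,dt = 1/s$, which is the cleanest route and avoids any delicate interchange issues since the integrand is nonnegative.)

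The main obstacle is the bookkeeping in the second step: correctly extracting the offspring law of the conditioned process from the Gadag–Rajarshi formula \eqref{uF}, being careful that the ``total event rate'' $b$ in $u(x) = b(f(x)-x)$ is $2+s$ (not $1+s$), and that conditioning on extinction rescales the state variable by $q$ inside $u$. Once the conditioned process is identified as a subcritical linear birth-death chain with parameters $(1, 1+s)$, the mean-area computation via $E[\tilde Y(t)] = e^{-st}$ is immediate. One should also double-check the edge behavior — that the conditioned process does go extinct almost surely, which is automatic since $\tilde Y$ is subcritical — so that the integral is a.s.\ finite and Tonelli applies to give $E[\int_0^\infty \tilde Y(t)\,dt] = \int_0^\infty E[\tilde Y(t)]\,dt$.
\end{proof}
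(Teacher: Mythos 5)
Your proposal is correct and follows essentially the same route as the paper: compute $u(x)=1+(1+s)x^2-(2+s)x$, use $u^{(F)}(x,y)=u(qx)/q$ with $q=1/(1+s)$ to identify the conditioned process as a linear birth-death process with birth rate $1$ and death rate $1+s$, and then integrate $E[Y(t)\mid A]=e^{-st}$ to get $1/s$. One small slip: in your first-step decomposition the death and birth probabilities for the \emph{conditioned} process should be $(1+s)/(2+s)$ and $1/(2+s)$ respectively (you wrote them swapped, which would give $h=-1/s$); but this does not affect your preferred, correct route via $\int_0^\infty e^{-st}\,dt$.
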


\begin{proof}
	We have 
	\begin{equation*}
		u(x)=1+(1+s)x^2-(2+s)x.
	\end{equation*}  
	By Lemma \ref{bplem}, the extinction probability is $1/(1+s)$, which can also be found by finding the smallest non-negative root of $u(x)$.  Thus, 
	\begin{equation*}
		u^{(F)}(x,y)=\frac{u(x/(1+s))}{1/(1+s)}=x^2+(1+s)-(2+s)x
	\end{equation*} 
	and
	\begin{equation}\label{uIxy}
		u^{(I)}(x,y)=\frac{u((x+sy)/(1+s))-u(x/(1+s))}{s/(1+s)}=sy^2+2xy-(2+s)y.
	\end{equation}
	The coefficients of $u^{(F)}(x,y)$ tell us that an individual with a finite line of descent gives birth at rate $1$ and dies at rate $1+s$. 
	Hence, for all $t > 0$, we have 
	\begin{equation*}
		E[Y(t)|A] = e^{(1-(1+s))t}=e^{-st}.
	\end{equation*}  
	The result follows by integrating over $t$.
\end{proof}

\begin{remark}\label{bprem}
	If instead each individual gives birth at rate $(1 + s)^k$ and dies at rate one, then we can apply Lemma \ref{totprog} with $(1 + s)^k - 1$ in place of $s$ to get
	\begin{equation*}
		E \bigg[ \int_0^{\infty} Y(t) \: dt \Big| A \bigg] = \frac{1}{(1 + s)^k - 1} \leq \frac{1}{sk}.
	\end{equation*}
\end{remark}

\begin{remark}\label{yulerem}
	Equation \eqref{uIxy} shows that an individual with an infinite line of descent gives birth to another individual with an infinite line of descent at rate $s$.  Therefore, conditional on $A^c$, the number of individuals with an infinite line of descent is a Yule process with birth rate $s$.
\end{remark}

\subsection{The number of type $k$ immigrants}

\begin{lemma}\label{Ptau2m>T1}
	For every constant $c>0$,
	\begin{equation}\label{46a} 
		\lim_{N\rightarrow\infty}P\left(M_2\left(T_1\wedge\frac{c}{N\mu s}\right)\leq\frac{\theta^{3/4}(\log N)^{1/4}}{s}\right)=1
	\end{equation}
	and
	\begin{equation}\label{46b}
		\lim_{N\rightarrow\infty}P\left(T_1\wedge \frac{c}{N\mu s}<\tau_2\right)=1.
	\end{equation}
\end{lemma}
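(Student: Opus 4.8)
The plan is to establish \eqref{46a} by a first-moment bound on the number of type~2 immigrants, and then to read off \eqref{46b} from it. For the deduction, recall that $M_2$ is nondecreasing and that, by \eqref{taukmut} with $k=2$, the threshold defining $\tau_2$ equals $\beta_1/(\log N)^{1/2} = \theta^{1/2}(\log N)^{1/2}/s$. The hypotheses \eqref{muN} and \eqref{sN} force $N\mu\log N \to 0$ and $(\log N)/(Ns) \to 0$, hence $\theta\log N \to 0$, so the right-hand side of \eqref{46a} satisfies
\[
\frac{\theta^{3/4}(\log N)^{3/4}/s}{\theta^{1/2}(\log N)^{1/2}/s} = (\theta\log N)^{1/4} \longrightarrow 0 .
\]
Consequently, for $N$ large, on the event in \eqref{46a} we have $M_2(t) \le \theta^{3/4}(\log N)^{3/4}/s < \theta^{1/2}(\log N)^{1/2}/s$ for every $t \le T_1 \wedge \frac{c}{N\mu s}$, which is exactly the event $\{T_1 \wedge \frac{c}{N\mu s} < \tau_2\}$. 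Hence \eqref{46b} follows from \eqref{46a}.

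To prove \eqref{46a}, write $M_2(t) = X_2(0) + \Lambda_2(t)$, where $\Lambda_2(t)$ counts the type~2 immigration events in $(0,t]$. Under the initial condition in Lemma~\ref{mainlem} we have $X_2(0) \le \beta_2$, and a short comparison of exponents, using $\theta \ge N\mu$ together with $\mu \ll 1/(N\log N)$ and $Ns \gg N^{1-\eta}$, shows $\beta_2 \ll \theta^{3/4}(\log N)^{3/4}/s$. So it suffices to show $P\big(\Lambda_2(\sigma) > \tfrac12\,\theta^{3/4}(\log N)^{3/4}/s\big) \to 0$, where $\sigma := T_1 \wedge \frac{c}{N\mu s}$. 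By \eqref{m} the rate of type~2 immigration at time $v$ is $m_2(v) = \mu X_1(v)$, and $X_1(v) \le (\log N)/s$ for all $v < T_1$ by the definition of $T_1$; since $\sigma$ is a bounded stopping time with $\sigma \le \frac{c}{N\mu s}$, the compensated counting process and optional stopping give
\[
E[\Lambda_2(\sigma)] = E\Big[\mu\!\int_0^{\sigma} X_1(v)\,dv\Big] \le \mu\cdot\frac{\log N}{s}\cdot\frac{c}{N\mu s} = \frac{c\log N}{Ns^2}.
\]

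It then remains to check that this mean is $o\big(\theta^{3/4}(\log N)^{3/4}/s\big)$, whereupon Markov's inequality completes the proof. Using $\theta \ge 1/(Ns)$, so $\theta^{3/4} \ge (Ns)^{-3/4}$,
\[
\frac{c\log N/(Ns^2)}{\theta^{3/4}(\log N)^{3/4}/s} = \frac{c(\log N)^{1/4}}{Ns\,\theta^{3/4}} \le \frac{c(\log N)^{1/4}(Ns)^{3/4}}{Ns} = \frac{c(\log N)^{1/4}}{(Ns)^{1/4}} \longrightarrow 0 ,
\]
since $Ns \gg N^{1-\eta}$ grows polynomially in $N$. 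There is no serious obstacle in this argument; the only point requiring genuine care is the bookkeeping of the competing powers of $\theta$ and $\log N$ in the three relevant thresholds — $\beta_2$, the bound $\theta^{3/4}(\log N)^{3/4}/s$ appearing in \eqref{46a}, and the threshold $\beta_1/(\log N)^{1/2}$ defining $\tau_2$ — so that every inequality points in the claimed direction.
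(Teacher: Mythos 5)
Your proof is correct and takes essentially the same route as the paper: bound $E[M_2(T_1 \wedge \tfrac{c}{N\mu s})]$ using $X_1(v)\le (\log N)/s$ before $T_1$ and the bound $\tfrac{c}{N\mu s}$ on the stopping time, apply Markov's inequality to get \eqref{46a}, then deduce \eqref{46b} by observing that $\theta \log N \to 0$ makes the threshold in \eqref{46a} smaller than $\beta_1/(\log N)^{1/2}$. The only cosmetic difference is that you absorb the deterministic initial term $X_2(0)\le \beta_2$ separately (showing $\beta_2 \ll \theta^{3/4}(\log N)^{3/4}/s$ and applying Markov only to the immigration count), whereas the paper simply adds $\beta_2$ to the expectation before Markov — the two are interchangeable.
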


\begin{proof}
	For $t\in[0,T_1)$, we have $X_1(t)\leq (\log N)/s$.  Because $M_2(0) \leq \beta_2$ and each type 1 individual can mutate to type 2 at rate $\mu$, it follows that \begin{equation*}
		E\left[M_2\left(T_1\wedge\frac{c}{N\mu s}\right)\right] \leq \beta_2 + \frac{\log N}{s}\cdot \mu \cdot \frac{c}{N\mu s} = \frac{\theta^{1/2} \mu (\log N)^{5/2}}{s^2} + \frac{c\log N}{Ns^2}.
	\end{equation*}
	Therefore, using Markov's inequality, and then using $\theta \geq N \mu$ to bound the first term and $\theta \geq \frac{1}{Ns}$ to bound the second term, we get
	\begin{align*}
P\left(M_2\left(T_1\wedge\frac{c}{N\mu s}\right) > \frac{\theta^{3/4}(\log N)^{1/4}}{s}\right) &\leq \frac{\mu (\log N)^{9/4}}{\theta^{1/4} s} + \frac{c(\log N)^{3/4}}{\theta^{3/4} Ns} \\
		&\leq \frac{(N \mu)^{3/4} (\log N)^{9/4}}{Ns} + c \left( \frac{(\log N)^3}{Ns} \right)^{1/4}.
	\end{align*}
	Because $N \mu \rightarrow 0$ and $(\log N)^a/(Ns) \rightarrow 0$ as $N \rightarrow \infty$ for all $a > 0$, the result \eqref{46a} follows.
	
	Using \eqref{0log}, we have $\theta^{3/4}(\log N)^{1/4}\ll\theta^{1/2} \ll \theta^{1/2} (\log N)^{1/2} \gamma_N = s \beta_1 \gamma_N$.  Because we have $\tau_2 = \inf\{t: M_2(t) \geq \beta_1 \gamma_N\}$, it follows that for sufficiently large $N$, 
	\begin{equation*}
		\left\{M_2\left(T_1\wedge\frac{c}{N\mu s}\right)\leq\frac{\theta^{3/4}(\log N)^{1/4}}{s}\right\}\subseteq\left\{\tau_2>T_1\wedge\frac{c}{N\mu s}\right\},
	\end{equation*} 
	which gives \eqref{46b}.
\end{proof}

For each positive integer $k\geq 2$, let $A_{k,X}$ and $A_{k,Y}$ be the events that the processes $\XbTk$ and $\YbTk$ go extinct, respectively.  Note that $A_{k,Y} \subseteq A_{k,X}$ since $\XbTk(t) \leq \YbTk(t)$ for all $t\geq 0$. 

\begin{lemma}\label{PAk}
	For every positive integer $k\geq 2$, 
	\begin{equation*}
		\lim_{N\rightarrow\infty}P(A_{k,Y})=1.
	\end{equation*}   
\end{lemma}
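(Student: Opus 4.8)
The plan is to exploit the branching‑with‑immigration structure of $\YbTk$ established above. As a whole, $\YbTk$ is a birth–death process with immigration: each individual gives birth at rate $\hat{b}_k$ and dies at rate $d_k$, and immigrants arrive at rate $m_k$, but only during the time interval $(0,\taukmut)$. Consequently $\YbTk$ becomes extinct precisely when every one of its \emph{founding families} dies out, where a founding family is either the descendants of one of the $\YbTk(0)=X_k(0)$ type $k$ individuals present at time $0$, or the descendants of one type $k$ immigrant entering during $I_k=[0,\taukmut)$. The total number of founding families is therefore $M_k(\taukmut^-)$, which by the definition \eqref{taukmut} of $\taukmut$ is at most
\[
\frac{\beta_{k-1}}{(\log N)^{1/2}} \;=\; \frac{\theta^{1/2}\mu^{k-2}(\log N)^{2k-7/2}}{s^{k-1}},
\]
a \emph{deterministic} upper bound. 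It thus suffices to bound $P(A_{k,Y}^c)$ by the expected number of founding families that survive forever.

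To estimate the survival probability of one founding family, I would use the time change $\lambda_k(t)=\int_0^t d_k(v)\,dv$: after passing to $\tilYbTk(t)=\YbTk(\lambda_k^{-1}(t))$, each individual gives birth at the constant rate $(1+s)^k$ and dies at rate $1$, so the descendants of each founder form a branching process with these rates. Since $d_k(t)\ge \mu>0$ we have $\lambda_k(t)\to\infty$, so extinction of a family is the same event in the two time scales; hence by Lemma \ref{bplem} applied with $(1+s)^k-1$ in place of $s$, each founding family survives with probability $1-(1+s)^{-k}\le k\log(1+s)\le ks$, independently of how the rest of the population evolves once that family has been founded. Summing over the at most $\beta_{k-1}/(\log N)^{1/2}$ founders,
\[
P(A_{k,Y}^c)\;\le\;\frac{\theta^{1/2}\mu^{k-2}(\log N)^{2k-7/2}}{s^{k-1}}\cdot ks\;=\;k\cdot\frac{\theta^{1/2}\mu^{k-2}(\log N)^{2k-7/2}}{s^{k-2}},
\]
which tends to $0$ by Lemma \ref{lim1}; this yields $\lim_{N\to\infty}P(A_{k,Y})=1$.

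The only delicate point — and the step I expect to require the most care — is the bookkeeping behind the first paragraph: one must verify that ``$\YbTk$ goes extinct'' genuinely coincides with ``every founding family goes extinct,'' which follows because immigration into $\YbTk$ ceases at the finite time $\taukmut$ and a branching process with immigration dies out (after its last immigrant) iff each of the finitely many families then present does; and one must observe that, conditionally on the founding times, the families evolve as independent branching processes with the stated constant rates, which is exactly how $\YbTk$ (through $\XbTk$ and $\barYbTk$) was constructed via independent Poisson clocks. Everything past that point is the union bound displayed above combined with Lemma \ref{lim1}, which was proved precisely so that this right‑hand side vanishes.
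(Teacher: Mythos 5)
Your proposal is correct and follows essentially the same route as the paper: bound the (deterministic) number of founding families by $\beta_{k-1}/(\log N)^{1/2}$ via the definition of $\taukmut$, note each family (after the time change to $\tilYbTk$) is a branching process with birth rate $(1+s)^k$ and death rate $1$ and hence survives with probability $1-(1+s)^{-k}$, and invoke Lemma~\ref{lim1}. The only difference is cosmetic: you upper-bound $P(A_{k,Y}^c)$ by a union bound over founders, whereas the paper lower-bounds $P(A_{k,Y})$ by the product of the extinction probabilities $\bigl(1/(1+s)^k\bigr)^{\beta_{k-1}/(\log N)^{1/2}}$; both reduce to the same asymptotic statement $s\beta_{k-1}/(\log N)^{1/2}\to 0$.
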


\begin{proof}
	In the process $\YbTk$, by Lemma \ref{bplem}, the family of each immigrant in the process $\YbTk$ goes extinct with probability $1/(1+s)^k$.  By the definition of $\taukmut$ in (\ref{taukmut}), there are at most $\beta_{k-1} \gamma_N$ immigrants during the time interval $[0,\taukmut)$, including the individuals in the population at time zero.  Then,
	\begin{equation*}
		P(A_{k,Y}) \geq \left(\frac{1}{(1+s)^k}\right)^{\beta_{k-1} \gamma_N} =\left[(1+s)^{1/s}\right]^{-ks\beta_{k-1}\gamma_N}.
	\end{equation*}
	The result follows because as $N \rightarrow \infty$, we have $s \beta_{k-1} \rightarrow 0$  by Lemma \ref{lim1}, $\gamma_N \rightarrow 0$ by definition, and $(1+s)^{1/s}\rightarrow e$.
\end{proof}

Let 
\begin{equation*}
	T_k^* = \inf\{t \geq 0: X_k(t) > \beta_{k-1}\}.
\end{equation*}

\begin{lemma}\label{PTk>taukm}
	For every positive integer $k\geq 2$, we have 
	\begin{equation*}
		\lim_{N\rightarrow\infty}P(\taukmut \leq T_k^*) = \lim_{N\rightarrow\infty}P(\taukmut \leq T_k) =1.
	\end{equation*}
\end{lemma}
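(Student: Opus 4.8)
The plan is to prove $P(\tau_k>T_k^*)\to 0$ and to read off both equalities from this. The statement for $T_k$ is then automatic: since $\beta_{k-1}/((\log N)/s)=\theta^{1/2}\big(\mu(\log N)^2/s\big)^{k-2}$, and both $\theta\to 0$ and $\mu(\log N)^2/s\le N\mu\log N+(\log N)/(Ns)\to 0$ by \eqref{muN}--\eqref{sN}, we have $\beta_{k-1}<(\log N)/s$ for all large $N$, so that $T_k^*\le T_k$ holds deterministically and $\{\tau_k\le T_k^*\}\subseteq\{\tau_k\le T_k\}$.

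First I would reduce the problem to the branching process $\YbTk$. On the event $\{T_k^*<\tau_k\}$ the time $T_k^*$ lies in $I_k=[0,\tau_k)$, so every type $k$ individual alive at $T_k^*$ descends from a type $k$ immigrant arising during $I_k$ (individuals of type $k$ present at time $0$ being included, since $0\in I_k$); hence $X_k(T_k^*)=\XbTk(T_k^*)$, which exceeds $\beta_{k-1}$ by the definition of $T_k^*$. As $\XbTk(t)\le\YbTk(t)$ for all $t$, this gives $\{\tau_k>T_k^*\}\subseteq\{\sup_{t\ge 0}\YbTk(t)>\beta_{k-1}\}$, and since $P(A_{k,Y}^c)\to 0$ by Lemma \ref{PAk}, it remains to bound $P\big(A_{k,Y}\cap\{\sup_{t}\YbTk(t)>\beta_{k-1}\}\big)$.

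This I would do through the total time-integral of $\YbTk$. On $A_{k,Y}$, after the time change of Section \ref{Beginproof} and by the argument of Lemma \ref{totprog} (a branching process conditioned on extinction is again a branching process, via \eqref{uF}--\eqref{uI}), each founding family of $\YbTk$ — the type $k$ individuals present at time $0$ together with the type $k$ immigrants arising in $I_k$, whose total number is $M_k(\tau_k^-)\le\beta_{k-1}/(\log N)^{1/2}$ by \eqref{taukmut} — evolves, independently of the others, as a branching process with birth rate $1$ and death rate $(1+s)^k$, whose expected total time-integrated size is $((1+s)^k-1)^{-1}$ by Remark \ref{bprem}. Summing over the at most $\beta_{k-1}/(\log N)^{1/2}$ families gives $E\big[\1_{A_{k,Y}}\int_0^\infty\YbTk(t)\,dt\big]\le \beta_{k-1}\big/\big((\log N)^{1/2}((1+s)^k-1)\big)$. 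For the reverse inequality, set $\rho=\inf\{t:\YbTk(t)>\beta_{k-1}\}$, so that $\{\sup_t\YbTk(t)>\beta_{k-1}\}=\{\rho<\infty\}$ and $\YbTk(\rho)\ge\beta_{k-1}$ there; by the strong Markov property the $\YbTk(\rho)$ individuals present at $\rho$ each found an independent branching process, and conditioning further on $A_{k,Y}$ forces each of these to go extinct, so that $E\big[\int_\rho^\infty\YbTk(t)\,dt\mid A_{k,Y},\mathcal F_\rho\big]\ge\YbTk(\rho)/((1+s)^k-1)\ge\beta_{k-1}/((1+s)^k-1)$ on $\{\rho<\infty\}$. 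Since $\{\rho<\infty\}\in\mathcal F_\rho$, inserting the factor $P(A_{k,Y}\mid\mathcal F_\rho)$ and taking expectations yields $E\big[\1_{A_{k,Y}}\int_0^\infty\YbTk(t)\,dt\big]\ge P\big(A_{k,Y}\cap\{\rho<\infty\}\big)\,\beta_{k-1}/((1+s)^k-1)$. Comparing the two bounds gives $P\big(A_{k,Y}\cap\{\rho<\infty\}\big)\le(\log N)^{-1/2}\to 0$, and hence $P(\tau_k>T_k^*)\to 0$.

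The main obstacle, I expect, is in making this last step fully rigorous: the events $A_{k,Y}$ and $\{\rho<\infty\}$ pull against each other, because after a large excursion $\YbTk$ is very unlikely to go extinct, so one cannot condition on $A_{k,Y}$ and then apply the Markov property at $\rho$ naively — one must keep $\1_{A_{k,Y}}$ inside the expectation and take expectations in the order indicated above, so that the small conditional probability $P(A_{k,Y}\mid\mathcal F_\rho)$ on $\{\rho<\infty\}$ collapses against $\1_{\{\rho<\infty\}}$ into $P(A_{k,Y}\cap\{\rho<\infty\})$ rather than degrading the bound. One also has to justify carefully that conditioning $\YbTk$ on extinction really does turn each founding family into an independent extinction-conditioned branching process with the stated rates, and that the time-integral identity of Remark \ref{bprem} survives the time-dependent, data-dependent immigration that $\YbTk$ carries when $k\ge 2$. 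Once these points are settled, the remaining inequalities are routine given Remark \ref{bprem} and Lemma \ref{PAk}.
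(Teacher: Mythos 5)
Your proposal is correct and uses essentially the same argument as the paper: bound $E[\1_{A_{k,Y}}\int_0^\infty\tilYbTk\,dt]$ from above by summing the extinction-conditioned expected integrals over the at most $\beta_{k-1}/(\log N)^{1/2}$ immigrant families (Remark~\ref{bprem}), bound the same quantity from below by applying the strong Markov property at the first exceedance of $\beta_{k-1}$, and take the ratio to get $P(A_{k,Y}\cap\{T_k^*<\tau_k\})\leq(\log N)^{-1/2}$, finishing with Lemma~\ref{PAk}. The only cosmetic difference is that you apply the Markov property at $\rho=\inf\{t:\YbTk(t)>\beta_{k-1}\}$ rather than at $T_k^*$ as the paper does, and you derive the key inequality $P(A\cap B)\leq E[Z\1_A]/E[Z\mid A\cap B]$ inline via the tower property rather than citing it as a separate elementary fact; both lead to the same bound.
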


\begin{proof}
By Lemma \ref{lim1}, we have $\beta_{k-1} \ll (\log N)/s$ for all $k \geq 2$, which means
$T_k^* < T_k$ for sufficiently large $N$.  Therefore, it suffices to show that $\lim_{N \rightarrow \infty} P(\tau_k \leq T_k^*) = 1$.
	Before time $\tau_k$, there are at most $\beta_{k-1} \gamma_N$ type $k$ immigrant families, counting the type $k$ individuals in the population at time zero, and Remark \ref{bprem} gives the expected value of the integral of the population size for an immigrant family conditioned on extinction.  By summing the result of Remark~\ref{bprem} over all immigrant families, we obtain
	\begin{equation}\label{intYtild1}
		E \bigg[ \bigg( \int_0^{\infty} \tilYbTk(t) \: dt \bigg) \mathds{1}_{A_{k,Y}} \bigg] \leq \frac{\beta_{k-1} \gamma_N}{(1+s)^k - 1}.
	\end{equation}
	On the event $T_k^*< \taukmut$, we have 
	\begin{equation*}
		\tilYbTk(\lambda_k(T_k^*)) = \YbTk(T_k^*) \geq X_{k,I_k}(T_k^*)>\beta_{k-1}.
	\end{equation*}
	Using the strong Markov property, we can now apply the result of Remark~\ref{bprem} again to the descendants of each of the $\tilYbTk(\lambda_k(T_k^*))$ type $k$ individuals in the population at time $\lambda_k(T_k^*)$.  We get
	\begin{equation}\label{intYtild2}
		E \bigg[ \int_0^{\infty} \tilYbTk(t) \: dt \Big| A_{k,Y} \cap \{T_k^* < \tau_k\} \bigg] \geq \frac{\beta_{k-1}}{(1+s)^k - 1}.
	\end{equation}
	Note that if $Z$ is a nonnegative random variable and $A$ and $B$ are events, then 
	\begin{equation*}
		P(A \cap B) = \frac{E[Z \mathds{1}_{A \cap B}]}{E[Z|A \cap B]} \leq \frac{E[Z\mathds{1}_A]}{E[Z|A \cap B]}.
	\end{equation*}
	Applying this result to \eqref{intYtild1} and \eqref{intYtild2}, we get
	\begin{equation*}
		P(A_{k,Y} \cap \{T_k^* < \tau_k\}) \leq \gamma_N.
	\end{equation*}
	Because $\gamma_N \rightarrow 0$ as $N \rightarrow \infty$, it now follows from Lemma \ref{PAk} that $\lim_{N \rightarrow \infty} P(T_k^* < \tau_k) = 0$, which implies the result. 
\end{proof}

\begin{lemma}\label{PtaukmINC}
	For every positive integer $k\geq 2$, we have 
	\begin{equation*}
		\lim_{N\rightarrow\infty}P(\tau_k<\tau_{k+1})=1.
	\end{equation*}
\end{lemma}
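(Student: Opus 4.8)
The plan is to follow the same template used in Lemma~\ref{PTk>taukm}: show that, with probability tending to one, the total number of type $(k+1)$ immigrants produced over the \emph{entire} relevant history is too small to trigger the stopping time $\tau_{k+1}$ before $\tau_k$. Recall that $\tau_{k+1}$ fires only when $M_{k+1}(t)$ exceeds $\beta_k/(\log N)^{1/2} = \theta^{1/2}\mu^{k-1}(\log N)^{2k-3/2}/s^k$. Type $(k+1)$ immigrants can only arise from type $k$ individuals mutating, so the rate at which they are produced is $X_k(t)\mu$. Before time $\tau_k$ we have $M_k(t) \le \theta^{1/2}\mu^{k-2}(\log N)^{2k-7/2}/s^{k-1}$ type $k$ immigrants that have ever appeared, so the relevant quantity to control is the total progeny (integrated over time) of all those type $k$ immigrant families.

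First I would set up the upper-bounding branching process $\tilYbTk$ as before, so that $X_k(t) \le \YbTk(t)$ for $t \in I_k = [0,\tau_k)$, and the time-changed process $\tilYbTk$ is a branching process with immigration in which each individual gives birth at rate $(1+s)^k$ and dies at rate $1$, with at most $\beta_{k-1}/(\log N)^{1/2}$ immigrants ever entering. The number of new type $(k+1)$ immigrants produced up to time $\tau_k$ is stochastically dominated by a Poisson random variable with mean $\mu \int_0^\infty \YbTk(t)\,dt = \mu \int_0^\infty \tilYbTk(t)\,dt$ (after undoing the time change, using $d_k \ge$ a constant bounded below, or more carefully by a comparison of the integrals as in \eqref{intYtild1}). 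Conditioning on the event $A_{k,Y}$ that $\YbTk$ goes extinct — which has probability tending to one by Lemma~\ref{PAk} — Remark~\ref{bprem} gives
$$E\bigg[\bigg(\int_0^\infty \tilYbTk(t)\,dt\bigg)\1_{A_{k,Y}}\bigg] \le \frac{\theta^{1/2}\mu^{k-2}(\log N)^{2k-7/2}}{s^{k-1}}\cdot\frac{1}{(1+s)^k-1} \le \frac{\theta^{1/2}\mu^{k-2}(\log N)^{2k-7/2}}{k\,s^k}.$$

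Next I would multiply by $\mu$ to bound the expected number of type $(k+1)$ immigrants born before $\tau_k$ (on $A_{k,Y}$) by roughly $\theta^{1/2}\mu^{k-1}(\log N)^{2k-7/2}/(k s^k)$, and compare this with the threshold $\beta_k/(\log N)^{1/2} = \theta^{1/2}\mu^{k-1}(\log N)^{2k-3/2}/s^k$ for $\tau_{k+1}$. The ratio is on the order of $1/(\log N)$, so Markov's inequality (combined with the trick $P(A\cap B) \le E[Z\1_A]/E[Z\mid A\cap B]$ exactly as in the proof of Lemma~\ref{PTk>taukm}, with $Z$ the integrated progeny and $B = \{\tau_{k+1} < \tau_k\}$, or more simply a direct Markov bound on the Poisson count conditioned on $A_{k,Y}$) shows $P(\{\tau_{k+1}<\tau_k\}\cap A_{k,Y}) \to 0$. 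Since $P(A_{k,Y}) \to 1$ by Lemma~\ref{PAk}, this yields $P(\tau_{k+1} < \tau_k) \to 0$, hence $P(\tau_k < \tau_{k+1}) \to 1$.

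The main obstacle I anticipate is the bookkeeping needed to rigorously dominate the number of type $(k+1)$ immigrants born before $\tau_k$ by a Poisson variable with the stated mean: one must justify that the type $k$ population $X_k(t)$ really is controlled by $\YbTk$ throughout $[0,\tau_k)$, handle the time change $\lambda_k$ carefully (using $\mu$ small and $d_k$ bounded away from $0$ and $\infty$ so that $\int_0^\infty \YbTk\,dt$ and $\int_0^\infty \tilYbTk\,dt$ differ by at most a constant factor), and deal with the conditioning on $A_{k,Y}$ cleanly. Once that domination is in place, the rest is the same short computation and Markov-inequality argument already deployed for Lemma~\ref{PTk>taukm}, so I expect the proof to be genuinely short.
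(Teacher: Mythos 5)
Your proposal identifies the right quantity to control --- the integrated type~$k$ population, multiplied by $\mu$ --- and the right order-of-magnitude comparison, so the overall plan matches the paper's. However, the step where you try to turn this into a rigorous bound has a real gap, and the tools you mention (the trick $P(A\cap B) \le E[Z\1_A]/E[Z\mid A\cap B]$, or ``a direct Markov bound on the Poisson count conditioned on $A_{k,Y}$'') do not obviously close it.

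The problem is the conditioning. You would like to bound $E\big[(M_{k+1}(\tau_k)-M_{k+1}(0))\,\1_{A_{k,Y}}\big]$ by $\mu\,E\big[\big(\int_0^\infty \tilYbTk(t)\,dt\big)\1_{A_{k,Y}}\big]$, but the optional-stopping identity $E[M_{k+1}(\tau_k)-M_{k+1}(0)]=\mu\,E[\int_0^{\tau_k}X_k(u)\,du]$ does not localize to the event $A_{k,Y}$, since $A_{k,Y}$ depends on the entire future of the process and is not $\mathcal{F}_{\tau_k}$-measurable. The ratio trick used in Lemma~\ref{PTk>taukm} also doesn't transfer: there, the event $\{T_k^*<\tau_k\}$ gives a \emph{deterministic} lower bound $\YbTk(T_k^*)>\beta_{k-1}$ at a stopping time, which via the strong Markov property yields a clean lower bound on $E[Z\mid A_{k,Y}\cap B]$. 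Here, the analogous event $\{\tau_{k+1}<\tau_k\}$ tells you only that many mutation events have occurred, which \emph{suggests} the integral $\int_0^{\tau_k}X_k$ was large but does not give a deterministic lower bound on $Z=\int_0^\infty\tilYbTk$, so the ratio trick has nothing to grip. Similarly, the ``Poisson domination'' step is informal: conditional on the path of $X_k$, the mutation count is not literally Poisson (mutations feed back into $X_k$), so only the expectation identity is available, and that brings you back to the conditioning issue.

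The paper resolves this cleanly by introducing the stopping time $\zeta_{k,X}=\inf\{t\ge 0:\int_0^t X_{k,I_k}(u)\,du>2\beta_{k-1}/s\}$. Stopping at $\zeta_{k,X}\wedge\tau_k$ gives the \emph{deterministic} pathwise bound $\int_0^{\zeta_{k,X}\wedge\tau_k}X_{k,I_k}(u)\,du\le 2\beta_{k-1}/s$, so $E[M_{k+1}(\zeta_{k,X}\wedge\tau_k)]\le M_{k+1}(0)+2\mu\beta_{k-1}/s$ with no conditioning needed, and Markov's inequality applies directly. Separately, one shows $\zeta_{k,X}\ge\tau_k$ with high probability: first that $\zeta_{k,Y}=\inf\{t:\int_0^t\tilYbTk(u)\,du>\beta_{k-1}/s\}=\infty$ w.h.p.\ (this is where the $A_{k,Y}$-restricted expectation and Markov's inequality are used, exactly as in your proposal but applied to an event about a threshold crossing rather than a conditional expectation), then transfers to $\zeta_{k,X}$ using $d_k(t)\ge 1/2$ for $t<T_k$ (and $\tau_k\le T_k$ w.h.p.\ by Lemma~\ref{PTk>taukm}). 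So the missing idea is precisely the truncation-by-stopping-time device: it converts the soft ``expected integrated progeny is small'' bound into a hard pathwise cap on the integral, which is what makes the Markov inequality for $M_{k+1}$ legitimate. One more small point you should not skip: the initial term $M_{k+1}(0)=X_{k+1}(0)\le\beta_{k+1}$ must also be checked to be $\ll\beta_k/(\log N)^{1/2}$.
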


\begin{proof}
	Define the stopping time
	\begin{equation*}
		\zeta_{k,Y}=\inf\left\{t\geq 0 : \int_0^t \tilYbTk(u) \: du > \frac{\beta_{k-1}}{s}\right\}.
	\end{equation*}
	It follows from \eqref{intYtild1} and the fact that $(1+s)^k - 1 \geq sk$ that
	\begin{equation*}
		E \bigg[ \bigg( \int_0^{\infty} \tilYbTk(t) \: dt \bigg) \mathds{1}_{A_{k,Y}} \bigg] \leq \frac{\beta_{k-1} \gamma_N}{sk}.
	\end{equation*}
	Therefore, by Markov's inequality, 
	\begin{equation*}
		P(\zeta_{k,Y} < \infty) \leq P(A_{k,Y}^c) + \frac{\gamma_N}{k},
	\end{equation*}
	which by Lemma \ref{PAk} implies that 
	\begin{equation}\label{zetaeq}
		\lim_{N \rightarrow \infty} P(\zeta_{k,Y} = \infty) = 1.
	\end{equation}
	
	When $t\in [0,T_{k})$,
	\begin{equation*}
		d_k(t)=1-\frac{(1+s)^kX_k(t)}{S(t)}+\mu\geq 1-\frac{(1+s)^k\log N}{Ns}.
	\end{equation*}
	Because $N^{-\eta} \ll s \ll 1$, it follows that for sufficiently large $N$, we have $d_k(t)\geq 1/2$ for $t \in [0, T_k)$, and therefore $\lambda_k'(t) \geq 1/2$ for $t \in [0, T_k)$.  Therefore, for $t \in [0, T_k)$,
	\begin{equation*}
		\int_0^t \XbTk(u) \: du \leq \int_0^t \YbTk(u) \: du = \int_0^t \tilYbTk(\lambda_k(u)) \: du \leq 2 \int_0^{\lambda_k(t)} \tilYbTk(v) \: dv.
	\end{equation*}
	Therefore, if we let 
	\begin{equation*}
		\zeta_{k,X}=\inf\left\{t\geq 0 : \int_0^t \XbTk(u) \: du > \frac{2 \beta_{k-1}}{s}\right\},
	\end{equation*}
	then $\zeta_{k,X} \geq T_k$ on the event $\zeta_{k,Y} = \infty$.  It follows from \eqref{zetaeq} and Lemma \ref{PTk>taukm} that
	\begin{equation}\label{zetatauk}
		\lim_{N \rightarrow \infty} P(\tau_k \leq \zeta_{k,X}) = 1.
	\end{equation}
	
	Because each individual acquires mutations at rate $\mu$, we have
	\begin{align*}
		E[M_{k+1}(\zeta_{k,X} \wedge \tau_k)] &\leq M_{k+1}(0) + \frac{2 \mu \beta_{k-1}}{s} \leq \beta_{k+1} + \frac{2 \mu \beta_{k-1}}{s}.
	\end{align*}
	Because $\beta_{k+1} \ll \beta_k/(\log N)^{1/2} \ll \beta_k \gamma_N$ by Lemma \ref{lim1} and the definition of $\gamma_N$, and likewise $\mu \beta_{k-1}/s = \beta_k/(\log N)^2 \ll \beta_k \gamma_N$, it follows from Markov's inequality that
	\begin{equation*}
		\lim_{N \rightarrow \infty} P(M_{k+1}(\zeta_{k,X} \wedge \tau_k) > \beta_k \gamma_N) = 0,
	\end{equation*}
	and therefore
	\begin{equation}\label{taukk}
		\lim_{N \rightarrow \infty} P(\tau_{k+1} \leq \zeta_{k,X} \wedge \tau_k) = 0.
	\end{equation}
	The result follows from \eqref{zetatauk} and \eqref{taukk}.
\end{proof}

Recall that $\Delta = \lfloor 1/(1 - \eta) \rfloor + 1$ and $\theta=N\mu \vee \frac{1}{Ns}$.  By (\ref{taukmut}), 
\begin{equation*}
	\tau_{\Delta+1}=\inf\left\{t\geq 0 : M_{\Delta + 1}(t)>\frac{\theta^{1/2}\mu^{\Delta-1}(\log N)^{2\Delta-3/2} \gamma_N}{s^{\Delta}}\right\}.
\end{equation*}  
Since $\mu\ll 1/(N \log N)$ and $s \gg N^{-\eta}$, we have $\theta \ll 1$ and \begin{equation*}
	\frac{\mu^{\Delta -1}(\log N)^{2\Delta-3/2}}{s^{\Delta}}\ll \frac{(\log N)^{\Delta-1/2}}{N^{\Delta(1 - \eta)-1}}\ll 1.
\end{equation*}
Therefore, for sufficiently large $N$,
\begin{equation}\label{taudelta}
	\tau_{\Delta+1}=\inf\{t\geq 0 : M_{\Delta+1}(t)\geq 1\}.
\end{equation}
That is, $\tau_{\Delta+1}$ is the first time that an individual of type $\Delta + 1$  appears in the population.

Let 
\begin{equation*}
	T^{(1)}=T_2\wedge T_3\wedge ... \wedge T_{\Delta}\wedge \tau_{\Delta+1}.
\end{equation*}
By combining Lemmas \ref{Ptau2m>T1}, \ref{PTk>taukm}, and \ref{PtaukmINC}, we obtain the following result.

\begin{lemma}\label{PT1<...}
	We have
	\begin{equation*}
		\lim_{N\rightarrow\infty}P\left(T_1\wedge \frac{c}{N\mu s}<\tau_2\leq T^{(1)}\right)=1.
	\end{equation*}
\end{lemma}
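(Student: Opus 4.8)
The plan is to express the event $\{T_1\wedge \frac{c}{N\mu s}<\tau_2\leq T^{(1)}\}$ as a finite intersection of events, each controlled by one of Lemmas~\ref{Ptau2m>T1}, \ref{PTk>taukm}, and \ref{PtaukmINC}, and then use that a finite intersection of events whose probabilities tend to one again has probability tending to one. The key structural point is that $\Delta$ is a fixed constant not depending on $N$, so only $O(\Delta)$ events are involved, and no quantitative control of the individual error probabilities is needed.

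First, the left inequality $T_1\wedge \frac{c}{N\mu s}<\tau_2$ is precisely \eqref{46b} of Lemma~\ref{Ptau2m>T1}, so it holds with probability tending to one.

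Second, for the right inequality $\tau_2\leq T^{(1)}=T_2\wedge\cdots\wedge T_\Delta\wedge\tau_{\Delta+1}$, I would show separately that $\tau_2\le T_k$ for $2\le k\le\Delta$ and that $\tau_2\le\tau_{\Delta+1}$, each with probability tending to one. Lemma~\ref{PtaukmINC} gives $P(\tau_k<\tau_{k+1})\to 1$ for every $k\ge 2$; intersecting over the fixed finite range $k=2,3,\dots,\Delta$ yields $P(\tau_2<\tau_3<\cdots<\tau_{\Delta+1})\to 1$, so on this event $\tau_2\le\tau_k$ for $3\le k\le\Delta$ and $\tau_2\le\tau_{\Delta+1}$. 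Lemma~\ref{PTk>taukm} gives $P(\tau_k\le T_k)\to 1$ for each $k\ge 2$, which handles $\tau_2\le T_2$ directly and, combined with the chain of strict inequalities above, gives $\tau_2<\tau_k\le T_k$ for $3\le k\le\Delta$. Hence on the intersection of all these events we have $\tau_2\le T_k$ for every $2\le k\le\Delta$ and $\tau_2\le\tau_{\Delta+1}$, so $\tau_2\le T^{(1)}$.

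Finally, the event in the statement contains the intersection of \eqref{46b}, the events $\{\tau_k\le T_k\}$ for $k=2,\dots,\Delta$, and the events $\{\tau_k<\tau_{k+1}\}$ for $k=2,\dots,\Delta$; this is a finite intersection of events whose probabilities converge to one, hence its probability converges to one. I do not anticipate any genuine difficulty here: the substantive work has already been done in the three cited lemmas, and what remains is the bookkeeping of assembling them. The only point worth checking is that the number of events being intersected stays bounded as $N\to\infty$, which holds because $\Delta$ is a fixed constant.
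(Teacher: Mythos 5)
Your proof is correct and takes essentially the same approach as the paper: the paper simply states that the result follows by combining Lemmas~\ref{Ptau2m>T1}, \ref{PTk>taukm}, and \ref{PtaukmINC}, and you have carried out exactly the bookkeeping that remark leaves implicit—using \eqref{46b} for the left inequality, the chain $\tau_2<\tau_3<\cdots<\tau_{\Delta+1}$ from Lemma~\ref{PtaukmINC}, and $\tau_k\leq T_k$ from Lemma~\ref{PTk>taukm}, all intersected over the finitely many indices $2\leq k\leq\Delta$.
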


\subsection{Bounding the process $X_1$ from below by a branching process}

Let $\alpha \in (0, 1)$, $\gamma \in (0,1)$, and $\zeta \in (0, 1)$ be constants.  Define 
\begin{equation*}
	T_{k,\alpha} = \inf\left\{t\geq 0 : X_k(t)>\alpha N\right\}.
\end{equation*}

\begin{lemma}\label{gamma}
	For sufficiently large $N$, we have 
	\begin{equation*}
		\frac{b_1(t)}{d_1(t)}\geq 1+\gamma s \quad\mbox{for all }t\in [0,T_{1,\alpha} \wedge T^{(1)}).
	\end{equation*}
\end{lemma}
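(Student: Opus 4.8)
The plan is to work throughout on the time interval $[0, T_{1,\alpha} \wedge T^{(1)})$. On this interval we have $X_1(t) \le \alpha N$ because $t < T_{1,\alpha}$, we have $X_k(t) \le (\log N)/s$ for $2 \le k \le \Delta$ because $t < T_k$, and we have $X_k(t) = 0$ for all $k \ge \Delta+1$ because $t < \tau_{\Delta+1}$ and, by \eqref{taudelta}, for large $N$ the time $\tau_{\Delta+1}$ is the first time a type $\Delta+1$ individual appears (new types are created only by mutation, one at a time, so no higher type can be present earlier). The first step is to get a workable formula for the total fitness. Using $\sum_{k \ge 0} X_k(t) = N$ to eliminate $X_0(t) = N - X_1(t) - \sum_{k=2}^{\Delta} X_k(t)$ from \eqref{S}, one obtains
$$S(t) = N + s X_1(t) + \sum_{k=2}^{\Delta} \big( (1+s)^k - 1 \big) X_k(t).$$
Since $0 \le (1+s)^k - 1 \le (1+s)^{\Delta} - 1 \le 2\Delta s$ for $N$ large, the sum on the right lies between $0$ and $2\Delta(\Delta-1)\log N$; in particular $N \le S(t) \le 2N$ for large $N$.

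Next I would rewrite the jump rates \eqref{b} and \eqref{d} using this formula. Since $S(t) - (1+s) X_1(t) = N - X_1(t) + \sum_{k=2}^{\Delta}\big((1+s)^k - 1\big) X_k(t)$, we get
$$d_1(t) S(t) = N - X_1(t) + \sum_{k=2}^{\Delta}\big((1+s)^k - 1\big) X_k(t) + \mu S(t), \qquad b_1(t) S(t) = (1+s)\big( N - X_1(t) \big).$$
Because $d_1(t) \ge \mu > 0$ and $S(t) > 0$, the inequality $b_1(t)/d_1(t) \ge 1 + \gamma s$ is equivalent, after multiplying by $S(t)$ and cancelling the common term $(1+\gamma s)\big(N - X_1(t)\big)$ from both sides, to
$$(1-\gamma)\, s \big( N - X_1(t) \big) \;\ge\; (1 + \gamma s) \bigg( \sum_{k=2}^{\Delta}\big((1+s)^k - 1\big) X_k(t) + \mu S(t) \bigg).$$

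Finally I would estimate the two sides. On the left, $N - X_1(t) \ge (1-\alpha) N$, so the left side is at least $(1-\gamma)(1-\alpha)\, s N$, a fixed positive constant times $sN$. On the right, $1 + \gamma s \le 2$ for large $N$, the sum is at most $2\Delta(\Delta-1)\log N$, and $\mu S(t) \le 2 N \mu$, so the right side is at most $4\Delta(\Delta-1)\log N + 4 N \mu$. Dividing by $N$, it suffices to check that $(\log N)/N \ll s$ and $\mu \ll s$: the first holds because $s \gg N^{-\eta}$ with $\eta < 1$, so $\frac{\log N}{sN} = \frac{\log N}{N^{1-\eta}} \cdot \frac{N^{-\eta}}{s} \to 0$; the second follows since in addition $\mu \ll 1/(N\log N)$ by \eqref{muN}. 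Hence for all sufficiently large $N$ the displayed inequality holds at every $t \in [0, T_{1,\alpha} \wedge T^{(1)})$, which is the assertion. The argument is essentially bookkeeping; the only place requiring care is the explicit computation of $S(t)$ and the ensuing observation that the extra fitness carried by types $2$ through $\Delta$ amounts to only $O(\log N)$, which is negligible next to $sN$ precisely because $\eta < 1$ forces $s$ to decay more slowly than $1/N$.
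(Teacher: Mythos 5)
Your proof is correct and follows essentially the same approach as the paper: bound $X_1(t)$ by $\alpha N$, bound the contributions of types $2,\dots,\Delta$ by $O(\log N)$ using $X_k(t)\le(\log N)/s$ together with $(1+s)^k-1\le 2\Delta s$, note that no type $\ge\Delta+1$ is present before $\tau_{\Delta+1}$, and observe that these error terms plus $\mu S(t)$ are $o(sN)$ because $\eta<1$. The only difference is cosmetic: you clear the denominator and cancel $(1+\gamma s)(N-X_1(t))$ directly, whereas the paper substitutes $X_1=\alpha N$ via a monotonicity observation and then passes to the limit of the resulting ratio.
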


\begin{proof}
	From (\ref{b}) and (\ref{d}), 
	\begin{equation*}
		\frac{b_1(t)}{d_1(t)} =\frac{(1+s)(N-X_1(t))}{(1+\mu)S(t)-(1+s)X_1(t)}.
	\end{equation*}
	By \eqref{taudelta}, if $N$ is sufficiently large, then during the time interval $[0,\tau_{\Delta+1})$, type $\Delta+1$ has never appeared in the population.  Hence, when $t\in[0,\tau_{\Delta+1})$, 
	\begin{equation*}
		S(t)=X_0(t)+(1+s)X_1(t)+...+(1+s)^{\Delta }X_{\Delta}(t).
	\end{equation*}
	Therefore,
	\begin{align*}
		&(1+\mu)S(t)-(1+s)X_1(t)\\
		&\quad=(1+\mu)[X_0(t)+(1+s)X_1(t)+...+(1+s)^{\Delta }X_{\Delta}(t)]-(1+s)X_1(t)\\
		&\quad=(1+\mu)[X_0(t)+(1+s)^2X_2(t)+...+(1+s)^{\Delta }X_{\Delta}(t)]+\mu(1+s)X_1(t).
	\end{align*}
	Because $s \rightarrow 0$ as $N \rightarrow \infty$, it is not difficult to show that for all $k$, we have $(1+s)^k\leq 1+2ks$ for sufficiently large $N$.  Therefore, when $t\in[0,T^{(1)})$,
	\begin{align*}
		&(1+\mu)S(t)-(1+s)X_1(t)\\
		&\quad \leq(1+\mu)[X_0(t)+(1+4s)X_2(t)+...+(1+2\Delta s)X_{\Delta}(t)]+\mu(1+s)X_1(t)\\
		&\quad =(1+\mu)[X_0(t)+X_1(t)+X_2(t)+...+X_{\Delta}(t)]+2s(1+\mu)\bigg(\sum_{j=2}^{\Delta}jX_j(t)\bigg) -(1-\mu s)X_1(t)\\
		&\quad =(1+\mu)N+2s(1+\mu)\bigg(\sum_{j=2}^{\Delta}jX_j(t)\bigg)-(1-\mu s)X_1(t)\\
		&\quad \leq (1+\mu)N+2s(1+\mu)\bigg(\sum_{j=2}^{\Delta}j\cdot\frac{\log N}{s}\bigg)-(1-\mu s)X_1(t)\\
		&\quad \leq (1+\mu)N+(1+\mu)(\Delta+1)^2\log N-(1-\mu s)X_1(t).
	\end{align*}
	Thus, when $t\in[0,T^{(1)})$, for sufficiently large $N$, 
	\begin{equation*}
		\frac{b_1(t)}{d_1(t)} \geq \frac{(1+s)(N-X_1(t))}{(1+\mu)(N+(\Delta+1)^2\log N)-(1-\mu s)X_1(t)}.
	\end{equation*}
	For all positive real numbers $a, b, c$ and $d$ such that $ac<b$, the function $f(x)= d(a-x)/(b-cx)$ is decreasing on the interval $[0,a]$; one can check that  $f'(x)=d(ac-b)/(b-cx)^2 <0$ for all $x\in (0,a)$.  Therefore, for sufficiently large $N$, when $t\in[0,T_{1,\alpha} \wedge T^{(1)})$,
	\begin{align*}
		\frac{b_1(t)}{d_1(t)} 
		&\geq \frac{(1+s)(N-\alpha N)}{(1+\mu)(N+(\Delta+1)^2\log N)-(1-\mu s)\alpha N}\\ &=\frac{(1+s)(1-\alpha)}{(1+\mu)\left(1+\frac{(\Delta+1)^2\log N}{N}\right)-(1-\mu s)\alpha}.
	\end{align*}
	Note that when $N\rightarrow \infty$, because $(\log N)/(Ns) \rightarrow 0$ and $\mu \ll s \ll 1$,
	\begin{align*}
		&\frac{1}{s}\left[\frac{(1+s)(1-\alpha)}{(1+\mu)\left(1+\frac{(\Delta+1)^2\log N}{N}\right)-(1-\mu s)\alpha} -1 \right] \\
		&\qquad \qquad =\frac{1}{s}\left[\frac{s-s(1+\mu)\alpha-\frac{(\Delta+1)^2\log N}{N}-\mu\left(1+\frac{(\Delta+1)^2\log N}{N}\right)}{(1+\mu)\left(1+\frac{(\Delta+1)^2\log N}{N}\right)-(1-\mu s)\alpha} \right]\\
		&\qquad \qquad =\frac{1-(1+\mu)\alpha-\frac{(\Delta+1)^2\log N}{Ns}-\frac{\mu}{s}\left(1+\frac{(\Delta+1)^2\log N}{N}\right)}{(1+\mu)\left(1+\frac{(\Delta+1)^2\log N}{N}\right)-(1-\mu s)\alpha} \\
		&\qquad \qquad \rightarrow 1.
	\end{align*}
	Therefore, for sufficiently large $N$,
	\begin{equation*}
		\frac{1}{s}\left[\frac{(1+s)(1-\alpha)}{(1+\mu)\left(1+\frac{(\Delta+1)^2\log N}{N}\right)-(1-\mu s)\alpha}-1 \right] \geq \gamma
	\end{equation*}
	and thus $b_1(t)/d_1(t) \geq 1+\gamma s$.
\end{proof}

\begin{lemma}\label{zeta}
	For sufficiently large $N$, we have $X_0(t)\geq (1-\zeta)N d_1(t)$ for all $t\in[0,T_1\wedge T^{(1)})$.
\end{lemma}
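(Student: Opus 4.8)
The plan is to combine two elementary bounds, each valid throughout the interval $[0, T_1 \wedge T^{(1)})$: an upper bound on $d_1(t)$ and a lower bound on $X_0(t)$. For the first, observe directly from \eqref{d} that $d_1(t) = 1 - (1+s)X_1(t)/S(t) + \mu \leq 1 + \mu$ for all $t \geq 0$, since $(1+s)X_1(t)/S(t) \geq 0$. Thus it suffices to show that $X_0(t) \geq (1-\zeta)(1+\mu)N$ on $[0, T_1 \wedge T^{(1)})$ for all sufficiently large $N$, because then $X_0(t) \geq (1-\zeta)(1+\mu)N \geq (1-\zeta)N d_1(t)$.

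For the lower bound on $X_0(t)$, I would count how many individuals can have positive type before time $T_1 \wedge T^{(1)}$. If $t < T_1$ then $X_1(t) \leq (\log N)/s$ by the definition of $T_1$; since $T^{(1)} = T_2 \wedge \cdots \wedge T_{\Delta} \wedge \tau_{\Delta+1}$, for each $j$ with $2 \leq j \leq \Delta$ we have $t < T^{(1)} \leq T_j$, so $X_j(t) \leq (\log N)/s$; and since $t < \tau_{\Delta+1}$, \eqref{taudelta} shows that for sufficiently large $N$ no individual of type $\Delta+1$ has appeared by time $t$, hence $X_j(t) = 0$ for all $j \geq \Delta+1$ (a type $j$ individual with $j > \Delta + 1$ could only arise from a lineage that passed through type $\Delta+1$). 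Therefore, using $N = \sum_{j=0}^{\infty} X_j(t)$, we get $X_0(t) = N - \sum_{j=1}^{\Delta} X_j(t) \geq N - \Delta (\log N)/s$ for all $t \in [0, T_1 \wedge T^{(1)})$ and all sufficiently large $N$.

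Finally I would combine the two estimates. It remains to check that $N - \Delta(\log N)/s \geq (1-\zeta)(1+\mu)N$ for large $N$, i.e. that $1 - \Delta(\log N)/(Ns) \geq (1-\zeta)(1+\mu)$. As $N \to \infty$ the left side tends to $1$ because $(\log N)/(Ns) \to 0$ by \eqref{sN}, while the right side tends to $1 - \zeta < 1$ because $\mu \to 0$ and $\zeta \in (0,1)$ is fixed; hence the inequality holds for all sufficiently large $N$, completing the proof. There is no real obstacle here: the argument is a direct counting estimate, and the only point needing a moment's care is the justification that $X_j(t) = 0$ for every $j > \Delta + 1$ on $[0, \tau_{\Delta+1})$, which follows from \eqref{taudelta} together with the fact that mutations only increase type by one at a time.
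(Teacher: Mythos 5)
Your proof is correct and follows essentially the same route as the paper: both argue from the two elementary bounds $d_1(t)\leq 1+\mu$ and $X_0(t)\geq N-\Delta(\log N)/s$ on $[0,T_1\wedge T^{(1)})$, the latter coming from the definitions of $T_1$, $T_j$, and $\tau_{\Delta+1}$ together with \eqref{taudelta}, and then let $N\to\infty$ using $(\log N)/(Ns)\to 0$ and $\mu\to 0$. The only difference is a minor algebraic rearrangement of the final comparison.
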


\begin{proof}
	From the definition of $d_1$ in (\ref{d}), $d_1(t)\leq 1+\mu$ for all $t\geq 0$. 
	Also, when $t\in [0,T_1\wedge T^{(1)})$, by \eqref{taudelta}, for sufficiently large $N$,
	\begin{equation*}
		X_0(t)=N-\sum_{i=1}^{\Delta}X_i(t)\geq N - \frac{\Delta\log N}{s}.
	\end{equation*}
	Hence, 
	\begin{equation*}
		\frac{X_0(t)}{d_1(t)}\geq\frac{N - \frac{\Delta\log N}{s}}{1+\mu}=\frac{N\left(1 - \frac{\Delta\log N}{Ns}\right)}{1+\mu}.
	\end{equation*}
	Because $(\log N)/(Ns) \rightarrow 0$ as $N \rightarrow \infty$ and $\mu \rightarrow 0$ as $N \rightarrow \infty$, we have $X_0(t)/d_1(t) \geq (1-\zeta)N$ for all $t\in [0,T_1\wedge T^{(1)})$ if $N$ is sufficiently large. 
\end{proof}

We now construct a new birth-death process with immigration called $Z_1$ which will bound the process $X_1$ from below.  We set $Z_1(0) = X_1(0)$.
\begin{enumerate}
	\item At time $t\in(0,T_1\wedge T^{(1)}]$, if a birth occurs in $X_1$, then 
	\begin{itemize}
		\item with probability $\frac{(1+\gamma s)Z_1(t)d_1(t)}{X_1(t)b_1(t)}$, a birth also occurs in $Z_1$, 
		\item with probability $1-\frac{(1+\gamma s)Z_1(t)d_1(t)}{X_1(t)b_1(t)}$, nothing happens in $Z_1$.
	\end{itemize}
	\item At time $t\in(0,T_1\wedge T^{(1)}]$, if a death occurs in $X_1$, then 
	\begin{itemize}
		\item with probability $\frac{Z_1(t)d_1(t)}{X_1(t)d_1(t)}$, a death also occurs in $Z_1$, 
		\item with probability $1-\frac{Z_1(t)d_1(t)}{X_1(t)d_1(t)}$, nothing happens in $Z_1$.
	\end{itemize}
	\item At time $t\in(0,T_1\wedge T^{(1)}]$, if an immigration event occurs in $X_1$, then
	\begin{itemize}
		\item with probability $\frac{(1-\zeta)N\mu d_1(t)}{X_0(t)\mu}$, an immigration event also occurs in $Z_1$, 
		\item with probability $1-\frac{(1-\zeta)N\mu d_1(t)}{X_0(t)\mu}$, nothing happens in $Z_1$.   
	\end{itemize}
	\item For times $t > T_1\wedge T^{(1)}$, the process $Z_1$ behaves independently of $X_1$, and
	\begin{itemize}
		\item a birth occurs at rate $(1+\gamma s)Z_1(t)d_1(t)$,
		\item a death occurs at rate $Z_1(t)d_1(t)$, 
		\item immigration occurs at rate $(1-\zeta)N\mu d_1(t)$.
	\end{itemize}
\end{enumerate}
From this construction, we see that $Z_1(t)\leq X_1(t)$ for all $t \in [0,T_1\wedge T^{(1)}]$.  Also, all of the probabilities in the construction are guaranteed to be in $[0,1]$ by Lemmas \ref{gamma} and \ref{zeta}.
Hence, $Z_1$ is a branching process with immigration where
\begin{itemize}
	\item each individual gives birth at rate $(1+\gamma s)d_1(t)$,
	\item each individual dies at rate $d_1(t)$, 
	\item immigration occurs at rate $(1-\zeta)N\mu d_1(t)$.
\end{itemize}
Recall the definition of the time scaling function $\lambda_1$ in (\ref{lambda1}). 
We define $\tilde{Z}_1(t)=Z_1(\lambda_1^{-1}(t))$ for all $t\geq 0$. 
Then the process $\tilde{Z}_1$ is a branching process with immigration in which each individual gives birth at rate $1+\gamma s$, each individual dies at rate $1$, and immigration occurs at rate $(1-\zeta)N\mu$.

By Lemma \ref{bplem}, the extinction probability of a family of an immigrant is $1/(1+\gamma s)$.  Thus, in the process $\tilde{Z}_1$, an immigrant whose family survives forever appears at rate 
\begin{equation*}
	(1-\zeta)N\mu\cdot \frac{\gamma s}{1+\gamma s}=\left(\frac{(1-\zeta)\gamma}{1+\gamma s}\right)N\mu s.
\end{equation*}
We define $\tau_{Z_1}$ to be the first time that an immigrant whose family survives forever appears in the process $\tilde{Z}_1$, and define
\begin{equation}\label{TZ1}
	T_{Z_1}=\inf\left\{t\geq 0 : Z_1(t)>\frac{\log N}{s}\right\}.
\end{equation}

\begin{lemma}\label{kappa}
	Let $\kappa \in (0,1)$ be a constant.  For sufficiently large $N$, we have \begin{equation*}
		P\left(\lambda_1(T_{Z_1})\leq \tau_{Z_1}+\frac{1}{\gamma s}\log\left(\frac{\log N}{\kappa s}\right)\right)> 1-\kappa.
	\end{equation*}
\end{lemma}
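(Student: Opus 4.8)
The plan is to work with the time-rescaled process $\tilde{Z}_1$, which is a branching process with immigration in which each individual gives birth at rate $1+\gamma s$ and dies at rate $1$, and then track only the sub-population descended from the first immigrant whose family survives forever. By Remark~\ref{yulerem} applied to $\tilde{Z}_1$ (with $\gamma s$ in place of $s$), after the appearance time $\tau_{Z_1}$ of that special immigrant, the number of its descendants with an infinite line of descent is a Yule process with birth rate $\gamma s$, started from one individual. Since the full process $\tilde{Z}_1$ dominates the count of these infinite-line-of-descent descendants, and by time-change $Z_1(\lambda_1^{-1}(t)) = \tilde{Z}_1(t)$, to prove the lemma it suffices to show that a Yule process $(W(t), t\ge 0)$ with birth rate $\gamma s$ and $W(0)=1$ reaches the level $(\log N)/s$ before time $\frac{1}{\gamma s}\log\!\big(\frac{\log N}{\kappa s}\big)$ with probability at least $1-\kappa$ for large $N$.

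The key step is the standard fact that a Yule process with rate $\gamma s$ at time $t$ has the geometric distribution with mean $e^{\gamma s t}$; more precisely, $e^{-\gamma s t} W(t)$ converges almost surely and in $L^1$ to an exponential random variable $\mathcal{E}$ with mean one, and in fact $P(W(t) \le x e^{\gamma s t}) \le P(\mathcal{E} \le x) \le x$ for all $x > 0$ because $W(t)$ is stochastically dominated appropriately — one can simply use that $W(t)$ is geometric with success probability $e^{-\gamma s t}$, so $P(W(t) < n) = 1 - (1 - e^{-\gamma s t})^{n-1}$, hence $P\big(W(t) < n\big) \le (n-1)e^{-\gamma s t}$. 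Taking $t = t^* := \frac{1}{\gamma s}\log\!\big(\frac{\log N}{\kappa s}\big)$ gives $e^{-\gamma s t^*} = \kappa s/\log N$, so with $n = \lceil (\log N)/s \rceil$ we get $P\big(W(t^*) < (\log N)/s\big) \le \frac{\log N}{s}\cdot\frac{\kappa s}{\log N} = \kappa$, which is exactly the bound we want. One then translates this back: on the complementary event $W(t^*) \ge (\log N)/s$ we have $\tilde{Z}_1(\tau_{Z_1} + t^*) \ge (\log N)/s$, hence $\lambda_1(T_{Z_1}) = \inf\{t : \tilde{Z}_1(t) > (\log N)/s\} \le \tau_{Z_1} + t^*$ up to the usual care with strict versus non-strict inequality, which is absorbed by taking $N$ large (the integer ceiling is negligible).

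The main obstacle, such as it is, lies in two bookkeeping points rather than in any substantial probability: first, justifying that the count of infinite-line-of-descent descendants of the special immigrant — which is the Yule process — is genuinely dominated by $\tilde{Z}_1$ after the shift by $\tau_{Z_1}$, using the branching property at the (stopping) time $\tau_{Z_1}$ and the fact, emphasized in the text, that a branching process conditioned to survive decomposes via the two-type structure of Gadag and Rajarshi; and second, handling the constant factor $1+\gamma s$ in the immigration rate $\big(\frac{(1-\zeta)\gamma}{1+\gamma s}\big)N\mu s$ of surviving immigrants, which only affects when $\tau_{Z_1}$ occurs and not the growth after $\tau_{Z_1}$, so it plays no role here. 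I would also note explicitly that $\tau_{Z_1} < \infty$ almost surely for large $N$ (the rate of surviving immigrants is positive), so conditioning on its value is legitimate. With these in hand the lemma follows directly from the Yule-process tail estimate above.
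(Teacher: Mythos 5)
Your proposal is correct and follows essentially the same route as the paper: both identify the Yule process of rate $\gamma s$ (via Remark~\ref{yulerem}) started at $\tau_{Z_1}$ that $\tilde Z_1$ dominates, and both exploit that the value of a Yule process at a fixed time is geometric with success probability $e^{-\gamma s t}$. The only cosmetic difference is that you bound $P(W(t^*) < n) \le (n-1)e^{-\gamma s t^*} \le \kappa$ via Bernoulli's inequality, whereas the paper computes the exact geometric tail $(1-\kappa s/\log N)^{\lfloor (\log N)/s\rfloor}$ and passes to the limit $e^{-\kappa} > 1-\kappa$; the two are interchangeable.
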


\begin{proof}
	Let $n(t)$ be the number of individuals in the process $\tilde{Z}_1$ at time $t+\tau_{Z_1}$ who have an infinite line of descent and descend from the first immigrant that has an infinite line of descent.  Let 
	\begin{equation*}
		\sigma=\inf\left\{t\geq 0 : n(t)>\frac{\log N}{s}\right\}.
	\end{equation*}
	Since $\lambda_1(T_{Z_1})$ is the first time the process $\tilde{Z}_1$ goes above $(\log N)/s$, we have $\lambda_1(T_{Z_1}) \leq \tau_{Z_1}+\sigma$.  Therefore,
	\begin{align*}
P\left(\lambda_1(T_{Z_1})\leq \tau_{Z_1}+\frac{1}{\gamma s}\log\left(\frac{\log N}{\kappa s}\right)\right)
		&\geq P\left(\sigma \leq \frac{1}{\gamma s}\log\left(\frac{\log N}{\kappa s}\right)\right)\nonumber\\
		&=P\left(n\left(\frac{1}{\gamma s}\log\left(\frac{\log N}{\kappa s}\right)\right)>\frac{\log N}{s}\right).
	\end{align*}
	It follows from Remark \ref{yulerem} that $(n(t), t \geq 0)$ is a Yule process in which each individual gives birth at rate $\gamma s$.
	Therefore, $n(t)$ has a geometric distribution with success probability $e^{-\gamma s t}$.  Using that $(\log N)/s \rightarrow \infty$ as $N \rightarrow \infty$, we have
	\begin{align*}
		P\left(n\left(\frac{1}{\gamma s}\log\left(\frac{\log N}{\kappa s}\right)\right)>\frac{\log N}{s}\right)
		&=\left[1-\exp\left(-\log\left(\frac{\log N}{\kappa s}\right)\right)\right]^{\left\lfloor\frac{\log N}{s}\right\rfloor} \\
		&=\left(1-\frac{\kappa s}{\log N}\right)^{\left\lfloor\frac{\log N}{s}\right\rfloor} \\
		&\rightarrow e^{-\kappa}
	\end{align*}
	as $N\rightarrow \infty$, and the result follows because $e^{-\kappa}> 1-\kappa$.
\end{proof}

\begin{lemma}\label{liminf}
	For every constant $c>0$, 
	\begin{equation*}
		\liminf_{N\rightarrow\infty} P\left(T_1\wedge T^{(1)}\leq \frac{c}{N\mu s}\right)\geq 1-e^{-c}.
	\end{equation*}
\end{lemma}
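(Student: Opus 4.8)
This is the counterpart of the one-sided estimate in Lemma~\ref{PT1>c}: Lemma~\ref{PT1>c} shows $T_1$ is asymptotically not much \emph{shorter} than an $\mathrm{Exp}(N\mu s)$ time, and Lemma~\ref{liminf} shows $T_1 \wedge T^{(1)}$ is asymptotically not much \emph{longer}. The plan is to push the comparison $Z_1 \le X_1$ on $[0, T_1 \wedge T^{(1)}]$ through the time change $\lambda_1$, and then invoke Lemma~\ref{kappa}, which says that the scaled process $\tilde{Z}_1$ reaches $(\log N)/s$ only a negligible amount of time after the first immigrant with an infinite line of descent appears. Write $E_N = \{T_1 \wedge T^{(1)} > c/(N\mu s)\}$; it suffices to show $\limsup_{N\to\infty} P(E_N) \le e^{-c}$.

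First I would establish the event inclusion
$$E_N \subseteq \Big\{ \lambda_1(T_{Z_1}) \ge (1 - \delta_N)\tfrac{c}{N\mu s} \Big\}, \qquad \delta_N := N^{\eta-1}\log N.$$
Indeed, on $E_N$ every $t \le c/(N\mu s)$ satisfies $t < T_1 \wedge T^{(1)}$, so $Z_1(t) \le X_1(t) \le (\log N)/s$ (the last inequality because $t < T_1$), whence $T_{Z_1} \ge c/(N\mu s)$ and therefore $\lambda_1(T_{Z_1}) \ge \lambda_1(c/(N\mu s))$. Moreover, on $E_N$ we have $c/(N\mu s) < T_1$, so by the estimate $d_1(v) \ge 1 - N^{\eta-1}\log N$ for $v \in (0, T_1)$ obtained in the proof of Lemma~\ref{b^>b}, $\lambda_1(c/(N\mu s)) = \int_0^{c/(N\mu s)} d_1(v)\,dv \ge (1-\delta_N)c/(N\mu s)$, and $\delta_N \to 0$ since $\eta < 1$.

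Next I would bring in the scaled process $\tilde{Z}_1$. In $\tilde{Z}_1$, immigrants whose families survive forever arrive as a homogeneous Poisson process of rate $\rho_N := \frac{(1-\zeta)\gamma}{1+\gamma s}\,N\mu s$, so $\tau_{Z_1}$ is exponentially distributed with mean $1/\rho_N$. With $\epsilon_N := \frac{1}{\gamma s}\log\!\big(\frac{\log N}{\kappa s}\big)$, Lemma~\ref{kappa} gives $P(\lambda_1(T_{Z_1}) > \tau_{Z_1} + \epsilon_N) < \kappa$ for all large $N$, and hence, for all large $N$,
$$P(E_N) \le \kappa + P\!\left(\tau_{Z_1} \ge (1-\delta_N)\tfrac{c}{N\mu s} - \epsilon_N\right) = \kappa + \exp\!\left(-\rho_N\!\left[(1-\delta_N)\tfrac{c}{N\mu s} - \epsilon_N\right]\right).$$
Now $\rho_N(1-\delta_N)c/(N\mu s) = \frac{(1-\zeta)\gamma(1-\delta_N)}{1+\gamma s}\,c \to (1-\zeta)\gamma c$, while $\rho_N \epsilon_N = \frac{1-\zeta}{1+\gamma s}\,N\mu\log(\tfrac{\log N}{\kappa s}) \to 0$, because $N\mu\log N \to 0$ by \eqref{muN} and $\log(\tfrac{\log N}{\kappa s}) = O(\log N)$ by \eqref{sN} (which forces $\log(1/s) = O(\log N)$). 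Therefore $\limsup_N P(E_N) \le \kappa + e^{-(1-\zeta)\gamma c}$. Since $\gamma, \zeta, \kappa \in (0,1)$ are arbitrary, letting $\gamma \uparrow 1$, $\zeta \downarrow 0$, and $\kappa \downarrow 0$ gives $\limsup_N P(E_N) \le e^{-c}$, which is exactly the asserted $\liminf$.

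The main obstacle is the time-change bookkeeping around Lemma~\ref{kappa}: one must check that $\tau_{Z_1}$ and $\lambda_1(T_{Z_1})$ are genuinely measured on the common clock of $\tilde{Z}_1$, and --- most importantly --- that the correction $\epsilon_N$ is negligible after multiplication by the Poisson rate $\rho_N$. This is precisely where the hypothesis $\mu \ll 1/(N\log N)$ enters, together with $s \gg N^{-\eta}$ to keep $\log(1/s)$ of order $\log N$; without these the extra term $\rho_N\epsilon_N$ would not vanish and one could not recover the sharp rate. The distortion of the time scale near $c/(N\mu s)$, encoded in $\delta_N$, is a milder point handled by the same $d_1 \ge 1 - N^{\eta-1}\log N$ bound. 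Everything else is assembling inclusions of events and passing to the limit.
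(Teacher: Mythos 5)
Your argument is correct and follows essentially the same route as the paper: compare $Z_1 \le X_1$ to deduce $T_1 \wedge T^{(1)} \le T_{Z_1}$, pass through the time change $\lambda_1$ using a near-unity lower bound on $d_1$ (the paper uses $d_1 \ge 1 - 2\log N/(Ns)$ rather than your $1 - N^{\eta-1}\log N$, but both tend to $1$), apply Lemma~\ref{kappa} to replace $\lambda_1(T_{Z_1})$ by $\tau_{Z_1}$ up to a negligible shift, and exploit the exponential law of $\tau_{Z_1}$ before sending $\gamma \uparrow 1$, $\zeta, \kappa \downarrow 0$. The bookkeeping of the two error terms $\delta_N$ and $\epsilon_N$ (absorbed into a factor $(1-\kappa)$ in the paper, taken to their limits directly in your write-up) is cosmetically different but mathematically equivalent.
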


\begin{proof} 
	By the definition of $d_1(t)$ in (\ref{d}), for sufficiently large $N$ and for all $t \in [0, T_1)$, we have
	\begin{equation*}
		d_1(t)\geq 1-\frac{(1+s)X_1(t)}{S(t)}\geq 1-\frac{(1+s)\frac{\log N}{s}}{N}\geq 1-\frac{2\log N}{Ns}.
	\end{equation*}
	Thus, by (\ref{lambda1}), for sufficiently large $N$, when $t \in [0, T_1]$,
	\begin{equation*}
		\lambda_1(t)\geq \left(1-\frac{2\log N}{Ns}\right)t.
	\end{equation*}
	
	Because $Z_1(t) \leq X_1(t)$ for $t \in [0, T_1 \wedge T^{(1)}]$, we have $T_1 \wedge T^{(1)} \leq T_{Z_1}$.  Also, $\lambda_1$ is an increasing function.  Hence,
	\begin{align*}
		P\left(T_1\wedge T^{(1)}>\frac{c}{N\mu s}\right)
		&\leq P\left(\lambda_1(T_1\wedge T^{(1)})>\left(1-\frac{2\log N}{Ns}\right)\frac{c}{N\mu s}\right)\\
		&\leq P\left(\lambda_1(T_{Z_1})>\left(1-\frac{2\log N}{Ns}\right)\frac{c}{N\mu s}\right).
	\end{align*}
	It follows from Lemma \ref{kappa} that
	\begin{align}
P\left(T_1\wedge T^{(1)}\leq\frac{c}{N\mu s}\right)
		&\geq P\left(\lambda_1(T_{Z_1})\leq \left(1-\frac{2\log N}{Ns}\right)\frac{c}{N\mu s}\right)\nonumber\\
		&\geq P\left(\lambda_1(T_{Z_1})\leq \tau_{Z_1}+\frac{1}{\gamma s}\log\left(\frac{\log N}{\kappa s}\right) \leq \left(1-\frac{2\log N}{Ns}\right)\frac{c}{N\mu s}\right)\nonumber\\
		&\geq 1-\kappa - P\left(\tau_{Z_1}+\frac{1}{\gamma s}\log\left(\frac{\log N}{\kappa s}\right) > \left(1-\frac{2\log N}{Ns}\right)\frac{c}{N\mu s}\right)\nonumber\\
		&= 1-\kappa - P\left(\tau_{Z_1} > \frac{c}{N\mu s}\left[\left(1-\frac{2\log N}{Ns}\right)-\frac{N\mu}{c\gamma}\log\left(\frac{\log N}{\kappa s}\right)\right]\right)\label{liminf1}.
	\end{align}
	As $N \rightarrow \infty$, we have $(\log N)/(Ns) \rightarrow 0$, and because $(\log N)/(\kappa s) \leq N$ for sufficiently large $N$ by \eqref{sN}, we have
	\begin{equation*}
		\frac{N\mu}{c\gamma}\log\left(\frac{\log N}{\kappa s}\right) \leq \frac{N\mu}{c\gamma}\log N \rightarrow 0.
	\end{equation*}
	Thus, from (\ref{liminf1}), for sufficiently large $N$, 
	\begin{equation*}
		P\left(T_1\wedge T^{(1)}\leq\frac{c}{N\mu s}\right)\geq 1- \kappa - P\left(\tau_{Z_1} > \frac{c(1-\kappa)}{N\mu s}\right).
	\end{equation*}
	Since $\tau_{Z_1}$ has an exponential distribution with rate $\frac{(1-\zeta)\gamma}{1+\gamma s} \cdot N\mu s$, 
	\begin{align*}
		P\left(\tau_{Z_1} > \frac{c(1-\kappa)}{N\mu s}\right) &=\exp\left(-\frac{(1-\zeta)\gamma N\mu s}{1+\gamma s}\cdot \frac{c(1-\kappa)}{N\mu s}\right) \\
		&=\exp\left(-\frac{c(1-\kappa)(1-\zeta)\gamma}{1+\gamma s}\right).
	\end{align*}
	Thus, 
	\begin{equation*}
		P\left(T_1\wedge T^{(1)}\leq\frac{c}{N\mu s}\right)\geq 1-\kappa -\exp\left(-\frac{c(1-\kappa)(1-\zeta)\gamma}{1+\gamma s}\right).
	\end{equation*}
	It follows that 
	\begin{equation*}
		\liminf_{N\rightarrow\infty}P\left(T_1\wedge T^{(1)}\leq\frac{c}{N\mu s}\right)\geq 1-\kappa -e^{-c(1-\kappa)(1-\zeta)\gamma}.
	\end{equation*}
	Since this statement is true for all $\gamma,\zeta,\kappa \in (0,1)$, the result follows by taking limits as $\gamma\rightarrow 1^-$ and $\zeta,\kappa \rightarrow 0^+$.
\end{proof}

\begin{proposition}\label{NusT1}
	The following statements hold.
	\begin{enumerate}
		\item For every $c > 0$, we have 
		\begin{equation*}
			\lim_{N \rightarrow \infty} P(N \mu s T_1 > c) = e^{-c}.
		\end{equation*}
		
		\item We have 
		\begin{equation*}
			\lim_{N\rightarrow\infty}P(T_1<\tau_2\leq T^{(1)})=1.
		\end{equation*}
		
		\item We have 
		\begin{equation*}
			\lim_{N\rightarrow\infty}P\left(M_2(T_1)\leq\frac{\theta^{3/4}(\log N)^{1/4}}{s}\right)=1.
		\end{equation*}
	\end{enumerate}
\end{proposition}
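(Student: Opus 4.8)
The plan is to deduce all three parts from the lemmas already established in this section, principally Lemma \ref{PT1>c}, Lemma \ref{liminf}, Lemma \ref{PT1<...}, and Lemma \ref{Ptau2m>T1}, together with routine $\varepsilon$–$c$ juggling; no new estimate on the process is needed.

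For part 1, the bound $\liminf_{N\to\infty} P(N\mu s T_1 > c) \geq e^{-c}$ is immediate from Lemma \ref{PT1>c}. For the matching upper bound I would combine Lemma \ref{liminf} with Lemma \ref{PT1<...}. The key observation is that on the event $\{T_1 \wedge \frac{c}{N\mu s} < \tau_2 \leq T^{(1)}\}$ of Lemma \ref{PT1<...}, if $T_1 > \frac{c}{N\mu s}$ then $T_1 \wedge \frac{c}{N\mu s} = \frac{c}{N\mu s}$, whence $T^{(1)} \geq \tau_2 > \frac{c}{N\mu s}$ as well, so $T_1 \wedge T^{(1)} > \frac{c}{N\mu s}$. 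Therefore
$$P\Big(T_1 > \tfrac{c}{N\mu s}\Big) \leq P\Big(T_1 \wedge T^{(1)} > \tfrac{c}{N\mu s}\Big) + P\Big(\big(T_1 \wedge \tfrac{c}{N\mu s} < \tau_2 \leq T^{(1)}\big)^c\Big),$$
and taking $\limsup$ with Lemma \ref{liminf} gives $\limsup_{N\to\infty} P(N\mu s T_1 > c) \leq e^{-c}$. This sandwich proves part 1, and I would record the immediate consequence that $P(T_1 \leq c/(N\mu s)) \to 1 - e^{-c}$, which can be made as close to $1$ as desired by taking $c$ large.

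For part 2, fix $\varepsilon > 0$, pick $c$ with $e^{-c} < \varepsilon$, and observe that on the intersection of $\{T_1 \leq c/(N\mu s)\}$ with the event of Lemma \ref{PT1<...} we have $T_1 = T_1 \wedge \frac{c}{N\mu s} < \tau_2 \leq T^{(1)}$; by part 1 and Lemma \ref{PT1<...}, both events have probability exceeding $1 - \varepsilon$ for large $N$, so $P(T_1 < \tau_2 \leq T^{(1)}) > 1 - 2\varepsilon$ eventually, and $\varepsilon$ was arbitrary. For part 3, the same device applies: since $M_2$ is nondecreasing, on $\{T_1 \leq c/(N\mu s)\}$ we have $M_2(T_1) = M_2(T_1 \wedge \frac{c}{N\mu s})$, and \eqref{46a} of Lemma \ref{Ptau2m>T1} controls the latter; combining this with $P(T_1 \leq c/(N\mu s)) \to 1 - e^{-c}$ and letting $c \to \infty$ finishes the argument.

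The only point that requires care is the dichotomy used in part 1 to pass from $T_1 \wedge T^{(1)}$ back to $T_1$ via the event of Lemma \ref{PT1<...}; once that is in place, the remainder is a standard "choose $c$ large, then $N$ large" argument.
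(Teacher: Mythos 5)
Your proposal is correct and follows essentially the same route as the paper: all three parts are deduced from Lemmas \ref{PT1>c}, \ref{liminf}, \ref{PT1<...}, and \ref{Ptau2m>T1} by the same decompositions, with only cosmetic differences in whether the bookkeeping is phrased via complementary events and liminfs (as in the paper) or via the equivalent ``choose $c$ large, then $N$ large'' $\varepsilon$-argument.
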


\begin{proof}
	First, note that for every $c>0$, 
	\begin{equation*}
		P\left(T_1\wedge T^{(1)}\leq \frac{c}{N\mu s}\right) \leq P\left(T_1\leq \frac{c}{N\mu s}\right)+ P\left(T^{(1)}\leq T_1\wedge\frac{c}{N\mu s}\right).
	\end{equation*}
	From Lemma \ref{PT1<...}, we have $P(T^{(1)}\leq T_1\wedge\frac{c}{N\mu s}) \rightarrow 0$ as $N\rightarrow\infty$.  Therefore, by Lemma \ref{liminf},
	\begin{equation*}
		\liminf_{N\rightarrow\infty} P\left(T_1\leq \frac{c}{N\mu s}\right) \geq \liminf_{N\rightarrow\infty} P\left(T_1\wedge T^{(1)}\leq \frac{c}{N\mu s}\right) \geq 1-e^{-c}.
	\end{equation*}
	This result and Lemma \ref{PT1>c} imply the first part of this proposition.
	
	To prove the second statement, note that for every $c>0$, 
	\begin{equation}\label{1}
		P\left(T_1\wedge \frac{c}{N\mu s}<\tau_2\leq T^{(1)}\right) \leq P(T_1<\tau_2\leq T^{(1)})+P\left(T_1>\frac{c}{N\mu s}\right).
	\end{equation}
	By Lemma \ref{PT1<...}, the term on the left hand side converges to $1$ as $N\rightarrow\infty$.  Therefore, using the result of the first statement of the proposition and taking the liminf of both sides of \eqref{1}, we get
	\begin{equation*}
		1\leq \liminf_{N\rightarrow\infty}P(T_1<\tau_2\leq T^{(1)})+e^{-c}.
	\end{equation*}
	The result follows because this inequality is true for all $c>0$.
	
	To prove the last statement, we first observe that
	\begin{align*}
P\left(M_2(T_1)\leq\frac{\theta^{3/4}(\log N)^{1/4}}{s}\right) \geq P\left(M_2\left(T_1\wedge\frac{c}{N\mu s}\right)\leq\frac{\theta^{3/4}(\log N)^{1/4}}{s}\right) - P\left(T_1>\frac{c}{N\mu s}\right).
	\end{align*}
	Taking the liminf on both sides and using Lemma \ref{Ptau2m>T1} and part 1 of this proposition, we have 
	\begin{equation*}
		\liminf_{N\rightarrow\infty}P\left(M_2(T_1)\leq\frac{\theta^{3/4}(\log N)^{1/4}}{s}\right) \geq 1-e^{-c}.
	\end{equation*}  
	Since this is true for all $c>0$, we have proved the last statement of the proposition.
\end{proof}

\section{Following the process until time $T_{1,\alpha}$}\label{proofsec2}

In this section, we study the process between the time $T_1$ when the number of type 1 individuals first reaches $(\log N)/s$ and the time $T_{1,\alpha}$, when the number of type $1$ individuals reaches $\alpha N$.  Our goal is to show that the number of type 1 individuals reaches $\alpha N$ quickly after time $T_1$, and that there is not enough time for many mutations to type 2 to occur during this period.

We now construct a branching process which will bound $X_1$ from below between times $T_1$ and $T_{1,\alpha}$.  For $t\geq T_1\wedge T^{(1)}$, let 
\begin{equation*}
	X_1^{(1)}(t) = X_{1,[0, T_1\wedge T^{(1)}]}(t).
\end{equation*}
That is, $X_1^{(1)}(t)$ is the number of type 1 individuals alive at time $t$ who were alive and had type $1$ at time $T_1\wedge T^{(1)}$, or who descended from type $1$ individuals that were alive at time $T_1\wedge T^{(1)}$.  Similar to the way we constructed $Z_1$, we construct a new birth-death process $(Z_1'(t),t\geq T_1\wedge T^{(1)})$ from the process $X_1^{(1)}$ as follows:
\begin{enumerate}
	\item Set $Z_1'(T_1\wedge T^{(1)})=X_1^{(1)}(T_1\wedge T^{(1)})$.
	\item At time $t \in (T_1\wedge T^{(1)},T_{1,\alpha}\wedge T^{(1)}]$, if a birth occurs in $\XtoaN$, then 
	\begin{itemize}
		\item with probability $\frac{(1+\gamma s)Z'_1(t)d_1(t)}{\XtoaN(t)b_1(t)}$, a birth also occurs in $Z'_1$,
		\item with probability $1-\frac{(1+\gamma s)Z'_1(t)d_1(t)}{\XtoaN(t)b_1(t)}$, nothing happens in $Z'_1$.
	\end{itemize}
	\item At time $t \in (T_1\wedge T^{(1)},T_{1,\alpha}\wedge T^{(1)}]$, if a death occurs in $\XtoaN$, then 
	\begin{itemize}
		\item with probability $\frac{Z'_1(t)d_1(t)}{\XtoaN(t)d_1(t)}$, a death also occurs in $Z'_1$,
		\item with probability $1-\frac{Z'_1(t)d_1(t)}{\XtoaN(t)d_1(t)}$, nothing happens in $Z'_1$.
	\end{itemize}
	\item For times $t > T_{1,\alpha}\wedge T^{(1)}$, the process $Z'_1$ evolves independently of the population, and
	\begin{itemize}
		\item a birth occurs at rate $(1+\gamma s)Z'_1(t)d_1(t)$, 
		\item a death occurs at rate $Z'_1(t)d_1(t)$.
	\end{itemize}
\end{enumerate}
From this construction, which is well-defined in view of Lemma \ref{gamma}, the process $Z'_1$ is a birth-death process in which each individual gives birth at rate $(1+\gamma s)d_1(t)$ and each individual dies at rate $d_1(t)$.  Also,
\begin{equation}\label{ZXX}
	Z'_1(t)\leq X_1^{(1)}(t) \leq X_1(t) \quad\mbox{for all }t\in [T_1\wedge T^{(1)},T_{1,\alpha}\wedge T^{(1)}).
\end{equation}
For $t\geq 0$, let $\tilde{Z}_1'(t)=Z'_1(\lambda_1^{-1}(t+\lambda_1(T_1\wedge T^{(1)})))$.  Then the process $(\tilde{Z}_1'(t),t\geq 0)$ is a branching process in which each individual gives birth at rate $1+\gamma s$ and dies at rate 1.

We now review a standard result for continuous-time branching processes, which can be obtained, for example, from Theorem 6.1 on page 103 of \cite{harris63}.

\begin{lemma}\label{bpvar}
	Let $(Z(t), t \geq 0)$ be a continuous-time branching process with $Z(0) = 1$ such that each individual independently lives for an exponentially distributed time with mean $b^{-1}$ and is then replaced by $k$ offspring with probability $p_k$.  For $x \in [0,1]$, let 
	\begin{equation*}
		f(x) = \sum_{k=0}^{\infty} p_k x^k, \qquad u(x) = b(f(x) - x).
	\end{equation*}  
	Let $\lambda = u'(1)$.  Then $E[Z(t)] = e^{\lambda t}$, and if $\lambda \neq 0$, then
	\begin{equation*}
		\textup{Var}(Z(t)) = \bigg(\frac{u''(1) - \lambda}{\lambda} \bigg) (e^{2 \lambda t} - e^{\lambda t}).
	\end{equation*}
\end{lemma}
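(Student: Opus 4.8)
The plan is to recover the first two moments of $Z(t)$ from the differential equation governing its probability generating function. Let $F(x,t) = E[x^{Z(t)}]$ for $x \in [0,1]$ and $t \geq 0$. Since $(Z(t), t \geq 0)$ is a continuous-time Markov branching process, conditioning on the time $\tau$ of the reproduction event of the initial individual (which is exponential with rate $b$) and on its offspring count, and then applying the branching property, yields the integral identity $e^{bt} F(x,t) = x + b \int_0^t e^{bv} f(F(x,v)) \, dv$; differentiating in $t$ gives the backward Kolmogorov equation
\[
\partial_t F(x,t) = u(F(x,t)), \qquad F(x,0) = x,
\]
with $u(x) = b(f(x) - x)$ as in the statement. (This is the standard fact recorded, for instance, as Theorem 6.1 on page 103 of \cite{harris63}.)

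First I would extract the mean. Differentiating the backward equation once in $x$ gives $\partial_t \partial_x F = u'(F) \, \partial_x F$; since $F(1,t) = 1$, setting $x = 1$ and writing $M_1(t) = E[Z(t)] = \partial_x F(1,t)$ gives $M_1'(t) = \lambda M_1(t)$ with $M_1(0) = 1$, hence $M_1(t) = e^{\lambda t}$. Differentiating the backward equation twice in $x$ gives $\partial_t \partial_{xx} F = u''(F) (\partial_x F)^2 + u'(F) \, \partial_{xx} F$; setting $x = 1$ and writing $M_2(t) = E[Z(t)(Z(t)-1)] = \partial_{xx} F(1,t)$ produces the linear ODE
\[
M_2'(t) = u''(1) e^{2\lambda t} + \lambda M_2(t), \qquad M_2(0) = 0.
\]
Solving with integrating factor $e^{-\lambda t}$, and using $\lambda \neq 0$, gives $M_2(t) = (u''(1)/\lambda)(e^{2\lambda t} - e^{\lambda t})$, and therefore
\[
\Var(Z(t)) = M_2(t) + M_1(t) - M_1(t)^2 = \Big( \frac{u''(1)}{\lambda} - 1 \Big)(e^{2\lambda t} - e^{\lambda t}) = \frac{u''(1) - \lambda}{\lambda}(e^{2\lambda t} - e^{\lambda t}),
\]
which is the asserted formula.

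The step needing the most care is the legitimacy of differentiating $F$ term by term in $x$ up to the boundary point $x = 1$, interchanging $\partial_t$ with $\partial_x$, and knowing a priori that $M_1(t)$ and $M_2(t)$ are finite. The clean route is to perform all $x$-differentiations on the open interval $x \in [0,1)$, where $F(\cdot, t)$ is a power series with nonnegative coefficients and hence real-analytic, obtaining the identities for $\partial_x F(x,t)$ and $\partial_{xx} F(x,t)$ there, and then let $x \uparrow 1$ via monotone convergence; if the offspring law has infinite second moment, i.e.\ $u''(1) = \infty$, the same argument shows $\Var(Z(t)) = \infty$ and the stated identity holds vacuously, so one may assume $u''(1) < \infty$. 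Since this is entirely classical, an acceptable shortcut is to quote Theorem 6.1 of \cite{harris63} directly and observe that the displayed formula is its specialization to $Z(0) = 1$.
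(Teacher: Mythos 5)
Your derivation is correct, and your closing remark already matches the paper's treatment: the paper does not prove this lemma but simply quotes it as a classical fact obtainable from Theorem 6.1 on page 103 of Harris (1963). The generating-function computation you supply is the standard argument underlying that reference, so the proposal and the paper are in agreement.
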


Let $D$ be the event that $T_1 < T^{(1)}$.  Then $P(D) \rightarrow 1$ as $N \rightarrow \infty$ by part 2 of Lemma~\ref{NusT1}, and on the event $D$, we have $\tilde{Z}_1'(0)=\lfloor (\log N)/s \rfloor+1$.  Also, let 
\begin{equation*}
	t_0 = \frac{1}{\gamma s} \log(\alpha N s).
\end{equation*}

\begin{lemma}\label{Z1'>aN}
	We have 
	\begin{equation*}
		\lim_{N\rightarrow\infty}P\left(\tilde{Z}_1'(t_0) > \alpha N \,\big|\,D\right)=1.
	\end{equation*}
\end{lemma}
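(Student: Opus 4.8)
The plan is to run a standard second-moment (Chebyshev) argument on the branching process $\tilde{Z}_1'$. I would work throughout conditionally on the event $D$. As recorded just before the statement, on $D$ we have $\tilde{Z}_1'(0) = n$, where $n := \lfloor (\log N)/s \rfloor + 1$, and the construction shows that, conditionally on $D$, the process $(\tilde{Z}_1'(t), t \geq 0)$ is a continuous-time branching process with no immigration in which each individual gives birth at rate $1+\gamma s$ and dies at rate $1$, started from $n$ ancestors. It therefore suffices to estimate the conditional mean and variance of $\tilde{Z}_1'(t_0)$ using Lemma~\ref{bpvar} (applied to each of the $n$ independent single-ancestor copies and summed) and then apply Chebyshev's inequality.

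For this branching process one has $u(x) = 1 + (1+\gamma s)x^2 - (2+\gamma s)x$, so that the Malthusian parameter is $\lambda = u'(1) = \gamma s$ and $u''(1) = 2(1+\gamma s)$. Hence by Lemma~\ref{bpvar} and additivity,
\[
E[\tilde{Z}_1'(t) \mid D] = n e^{\gamma s t}, \qquad \Var\big(\tilde{Z}_1'(t) \mid D\big) = n \cdot \frac{2+\gamma s}{\gamma s}\big(e^{2\gamma s t} - e^{\gamma s t}\big).
\]
The key point is that $t_0 = \frac{1}{\gamma s}\log(\alpha N s)$ is chosen precisely so that $e^{\gamma s t_0} = \alpha N s$. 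Since $n \geq (\log N)/s$, this gives $E[\tilde{Z}_1'(t_0) \mid D] = n\alpha N s \geq \alpha N \log N$, which dwarfs $\alpha N$, while $\Var(\tilde{Z}_1'(t_0) \mid D) \leq n \cdot \frac{2+\gamma s}{\gamma s}(\alpha N s)^2 = \frac{(2+\gamma s)\alpha^2 N^2 s\, n}{\gamma}$, which is $O(N^2 \log N)$ because $sn \leq \log N + s$. Using $E[\tilde{Z}_1'(t_0) \mid D] - \alpha N = \alpha N(sn - 1) \geq \alpha N(\log N - 1)$, Chebyshev's inequality yields
\[
P\big(\tilde{Z}_1'(t_0) \leq \alpha N \,\big|\, D\big) \leq \frac{\Var(\tilde{Z}_1'(t_0) \mid D)}{\big(E[\tilde{Z}_1'(t_0) \mid D] - \alpha N\big)^2} = O\!\left(\frac{1}{\log N}\right) \longrightarrow 0,
\]
which is the assertion of the lemma.

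I do not expect a serious obstacle here: once the reduction is in place this is a routine computation. The one point deserving care is the first sentence of the argument, namely justifying that conditionally on $D$ the thinning-and-time-change construction really does leave $\tilde{Z}_1'$ with the exact law of a branching process from $n = \lfloor(\log N)/s\rfloor+1$ ancestors, decoupled from the remaining population dynamics used to build it. This is essentially built into the construction (the thinning probabilities are valid by Lemma~\ref{gamma}, the factor $d_1(t)$ is removed by the time change $\lambda_1$, and the auxiliary randomization is independent of the population), and it is the content of the remark preceding the lemma; after that, everything reduces to plugging the Malthusian parameter and second factorial moment into Lemma~\ref{bpvar}.
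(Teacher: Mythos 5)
Your proof is correct and follows the same route as the paper: apply Lemma~\ref{bpvar} to compute the conditional mean and variance of $\tilde{Z}_1'(t_0)$ as a branching process started from $\lfloor(\log N)/s\rfloor+1$ ancestors, observe that $e^{\gamma s t_0}=\alpha Ns$ makes the mean of order $\alpha N\log N$ while the standard deviation is of order $N\sqrt{\log N}$, and conclude by Chebyshev. The only cosmetic difference is that you bound the one-sided gap $E[\tilde{Z}_1'(t_0)\mid D]-\alpha N\geq\alpha N(\log N-1)$ directly, whereas the paper reduces to the two-sided deviation $\tfrac12\alpha N\log N$; both yield the same $O(1/\log N)$ bound.
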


\begin{proof}
	Since $(\tilde{Z}_1'(t),t\geq 0)$ is a branching process as described above, 
	\begin{equation*}
		E\left[\tilde{Z}_1'(t_0)\big| \,D \right] =\left(\left\lfloor\frac{\log N}{s}\right\rfloor+1\right)e^{\gamma s t_0} = \alpha N s \left(\left\lfloor\frac{\log N}{s}\right\rfloor+1\right).
	\end{equation*}
	Therefore, if $N$ is sufficiently large, $\tilde{Z}_1'(t_0)\leq\alpha N$ implies
	\begin{equation*}
		|\tilde{Z}_1'(t_0) - E[\tilde{Z}_1'(t_0)| D]| > \frac{1}{2} \alpha N \log N.
	\end{equation*}
	It thus follows from the conditional Chebyshev's inequality that
	\begin{equation}\label{Zcheb}
		P\left(\tilde{Z}_1'(t_0)\leq\alpha N\big|D\right) \leq \frac{4}{(\alpha N \log N)^2} \textup{Var}\left(\tilde{Z}_1'(t_0)\big|D\right).
	\end{equation}
	The generating function for this branching process, using the notation of Lemma \ref{bpvar}, satisfies
	\begin{equation*}
		u(x) = 1 + (1 + \gamma s)x^2 - (2 + \gamma s)x, \qquad u'(1) = \gamma s, \qquad u''(1) = 2(1 + \gamma s).
	\end{equation*}
	Therefore, by Lemma \ref{bpvar}, since the numbers of offspring produced by the $\lfloor (\log N)/s \rfloor + 1$ individuals at time zero are independent, we have
	\begin{equation}\label{Zvar}
		\textup{Var}\left(\tilde{Z}_1'(t_0)\big|D\right) \leq \left(\left\lfloor\frac{\log N}{s}\right\rfloor+1\right) \left( \frac{2 + \gamma s}{\gamma s} \right) \left( e^{2 \gamma s t_0} - e^{\gamma s t_0} \right).
	\end{equation}
	Note that $e^{2 \gamma s t_0} = (\alpha N s)^2$ and, if $N$ is sufficiently large, \begin{equation*}
		\left(\left\lfloor \frac{\log N}{s} \right\rfloor + 1\right)(2 + \gamma s) \leq \frac{3\log N}{s}.
	\end{equation*}
	Therefore, it follows from \eqref{Zcheb} and \eqref{Zvar} that for sufficiently large $N$, \begin{equation*}
		P\left(\tilde{Z}_1'(t_0)\leq\alpha N\big|D\right) \leq \frac{4}{(\alpha N \log N)^2} \cdot \frac{3 \log N}{\gamma s^2} (\alpha N s)^2 = \frac{12}{\gamma \log N},
	\end{equation*}
	which goes to 0 as $N\rightarrow \infty$.  The result of the lemma follows.
\end{proof}

\begin{lemma}\label{PT1'<}
	There is a positive constant $C$ such that
	\begin{equation*}
		\lim_{N\rightarrow\infty}P\left(T_{1,\alpha} \wedge T^{(1)} \leq T_1+ \frac{C \log N}{s} \right)=1.
	\end{equation*}
\end{lemma}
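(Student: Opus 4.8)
The plan is to derive this directly from Lemma~\ref{Z1'>aN} by translating the growth of the time-changed process $\tilde Z_1'$ back into real time. Throughout I would work on the event $D=\{T_1<T^{(1)}\}$, which has probability tending to one by part~2 of Proposition~\ref{NusT1}, and abbreviate $\tau=T_{1,\alpha}\wedge T^{(1)}$. On $D$ we have $T_1\wedge T^{(1)}=T_1$, so \eqref{ZXX} gives $Z_1'(t)\le X_1(t)$ for all $t\in[T_1,\tau)$, and $\tilde Z_1'(t)=Z_1'(\lambda_1^{-1}(t+\lambda_1(T_1)))$; moreover $T_1<\tau$ for large $N$ since $(\log N)/s<\alpha N$.

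The first step is a lower bound on $d_1$ (hence on $\lambda_1'$) on $[T_1,\tau)$. For $t$ in this interval we have $X_1(t)\le\alpha N$ and $S(t)\ge N$, so $(1+s)X_1(t)/S(t)\le(1+s)\alpha$, and therefore $d_1(t)\ge 1-(1+s)\alpha\ge\frac{1-\alpha}{2}=:\delta$ for all sufficiently large $N$, since $s\to 0$. The second step introduces $s^*=\lambda_1^{-1}(t_0+\lambda_1(T_1))$, the real time corresponding to the time-changed time $t_0$, so that $\tilde Z_1'(t_0)=Z_1'(s^*)$ and $s^*>T_1$. On the event $D\cap\{\tilde Z_1'(t_0)>\alpha N\}$ I claim $\tau\le s^*$: if not, then $s^*\in(T_1,\tau)$, and \eqref{ZXX} gives $X_1(s^*)\ge Z_1'(s^*)=\tilde Z_1'(t_0)>\alpha N$, contradicting $s^*<T_{1,\alpha}$. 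Consequently $\lambda_1(\tau)\le\lambda_1(s^*)$, so
$$\delta(\tau-T_1)\le\int_{T_1}^{\tau}d_1(v)\,dv=\lambda_1(\tau)-\lambda_1(T_1)\le\lambda_1(s^*)-\lambda_1(T_1)=t_0=\frac{1}{\gamma s}\log(\alpha N s).$$
Since $\log(\alpha N s)\le\log N$ for large $N$, this yields $\tau-T_1\le C(\log N)/s$ with $C=2/((1-\alpha)\gamma)$, i.e.\ $T_{1,\alpha}\wedge T^{(1)}\le T_1+C(\log N)/s$ on this event.

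To finish, I would note that $P(D)\to1$ and, by Lemma~\ref{Z1'>aN}, $P(\tilde Z_1'(t_0)>\alpha N\mid D)\to1$, so $P(D\cap\{\tilde Z_1'(t_0)>\alpha N\})\to1$, which gives the lemma. There is no serious obstacle here: the lemma is essentially a corollary of Lemma~\ref{Z1'>aN}, and the only point requiring care is the bookkeeping around the time change $\lambda_1$ and the half-open coupling interval in \eqref{ZXX} — bounding $\tau-T_1$ directly, rather than $s^*-T_1$, sidesteps the need to control $d_1$ past time $\tau$, where $X_1$ may be large.
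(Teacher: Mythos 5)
Your proof is correct and takes essentially the same route as the paper: both argue via the lower bound $d_1(t)\ge 1-(1+s)\alpha$ on $[0,T_{1,\alpha})$ (you fix $\delta=(1-\alpha)/2$, the paper picks $\alpha'\in(\alpha,1)$), both use the time change $\lambda_1$ to pull $t_0$ back to real time, and both invoke Lemma~\ref{Z1'>aN} to conclude. The only cosmetic difference is that you argue the contrapositive ($\tilde Z_1'(t_0)>\alpha N$ forces $\tau\le s^*$) while the paper defines the ``bad'' event $D^*$ and shows $\tilde Z_1'(t_0)\le\alpha N$ on it; these are the same argument.
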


\begin{proof}
	By the definition of $d_1$ in (\ref{d}), when $t\in[0,T_{1,\alpha})$, 
	\begin{equation*}
		d_1(t)\geq 1-\frac{(1+s)X_1(t)}{S(t)}\geq 1-\frac{(1+s)\alpha N}{N}=1-(1+s)\alpha.
	\end{equation*}
	Let $\alpha'$ be a constant such that $\alpha < \alpha' < 1$.  
	Since $s\rightarrow 0$ as $N\rightarrow\infty$, for sufficiently large $N$ we have $d_1(t)\geq 1-\alpha'$ for all $t\in[0,T_{1,\alpha})$.
	By the definition of $\lambda_1$ in (\ref{lambda1}), when $0\leq u\leq t\leq T_{1,\alpha}$,
	\begin{equation}\label{l1t-l1s}
		\lambda_1(t)-\lambda_1(u) = \int_u^t d_1(v) \: dv \geq (1-\alpha')(t-u).
	\end{equation}
	Let $D^*$ be the event that $(T_1\wedge T^{(1)})+ \frac{t_0}{1 - \alpha'} < T_{1,\alpha} \wedge T^{(1)}$.  By (\ref{l1t-l1s}), on $D^*$ we have
	\begin{equation*}
		\lambda_1\left(T_1\wedge T^{(1)}+\frac{t_0}{1 - \alpha'} \right)\geq \lambda_1(T_1\wedge T^{(1)})+ t_0.
	\end{equation*}
	Since $\lambda_1$ is an increasing function, it follows that on $D^*$,
	\begin{equation*}
		T_{1,\alpha}\wedge T^{(1)}>T_1\wedge T^{(1)}+ \frac{t_0}{1 - \alpha'} > \lambda^{-1}\left(\lambda_1(T_1\wedge T^{(1)})+ t_0 \right).
	\end{equation*}
	Define $T_{1,\alpha}' = \inf\{t\geq 0 : Z_1'(t)>\alpha N\}$.  By \eqref{ZXX}, either $X_1$ reaches $\alpha N$ before or at the same time as $Z_1'$ does, or $Z_1'$ reaches $\alpha N$ after time $T^{(1)}$.  Therefore, we have $T_{1,\alpha}\wedge T^{(1)}\leq T_{1,\alpha}'$, which means that on $D^*$,
	\begin{equation*}
		T_{1,\alpha}'> \lambda^{-1}\left(\lambda_1(T_1\wedge T^{(1)})+t_0\right).
	\end{equation*}
	By the definition of $\tilde{Z}_1'$, it follows that the process $\tilde{Z}_1'$ does not go above $\alpha N$ until after time $t_0$.  That is, on $D^*$ we have
	\begin{equation*}
		\tilde{Z}_1' (t_0) \leq \alpha N.
	\end{equation*}
	Therefore, recalling that $D$ is the event that $T_1 < T^{(1)}$ and using Lemma \ref{Z1'>aN}, we have
	\begin{equation*}
		P(D^*|D) \leq P(\tilde{Z}_1' (t_0) \leq \alpha N|D) \rightarrow 0 \quad\mbox{as }N \rightarrow \infty.
	\end{equation*}
	Because $\lim_{N\rightarrow\infty}P(T_1<T^{(1)})=1$ by part 2 of Proposition \ref{NusT1}, it follows that $P(D^*) \rightarrow 0$ as $N \rightarrow \infty$, which means
	\begin{equation*}
		\lim_{N\rightarrow\infty}P\left(T_{1,\alpha} \wedge T^{(1)} \leq T_1+ \frac{t_0}{1 - \alpha'} \right)=1.
	\end{equation*}
	Because $\alpha < 1$ and $s \rightarrow 0$, for sufficiently large $N$ we have $t_0 \leq (\log N)/(\gamma s)$.  The result follows if we choose $C > 1/(\gamma (1-\alpha'))$.
\end{proof}

\begin{lemma}\label{Ptau2m>T1+clog/s}
	For all positive constants $C$, we have
	\begin{equation*}
		\lim_{N\rightarrow\infty}P\left(\tau_2>T_1+\frac{C\log N}{s}\right)=1.
	\end{equation*}
\end{lemma}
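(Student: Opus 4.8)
The plan is to deduce the statement from a single first-moment estimate on the mutation-counting process $M_2$, leaning on two facts already in hand: Proposition~\ref{NusT1}(3), which controls $M_2(T_1)$, and the hypothesis $\mu N \log N \to 0$ from \eqref{muN}. Write $\sigma = T_1 + (C\log N)/s$ and recall from \eqref{taukmut} with $k = 2$ that $\tau_2 = \inf\{t \ge 0 : M_2(t) > \theta^{1/2}(\log N)^{1/2}/s\}$. Since $M_2$ is nondecreasing and right-continuous, $M_2(\tau_2) > \theta^{1/2}(\log N)^{1/2}/s$ whenever $\tau_2$ is finite, and hence $\{\tau_2 \le \sigma\} \subseteq \{M_2(\sigma) > \theta^{1/2}(\log N)^{1/2}/s\}$; it therefore suffices to show that $P(M_2(\sigma) > \theta^{1/2}(\log N)^{1/2}/s) \to 0$ as $N \to \infty$.

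I would split $M_2(\sigma) = M_2(T_1) + \big(M_2(\sigma) - M_2(T_1)\big)$ and show that each piece stays below $\tfrac{1}{2}\theta^{1/2}(\log N)^{1/2}/s$ with probability tending to $1$. For the first piece, Proposition~\ref{NusT1}(3) gives $M_2(T_1) \le \theta^{3/4}(\log N)^{3/4}/s$ with probability tending to $1$, while \eqref{0log} yields $\theta^{1/4}(\log N)^{1/4} \to 0$, so for large $N$ this is at most $\tfrac{1}{2}\theta^{1/2}(\log N)^{1/2}/s$.

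For the increment over the interval $(T_1, \sigma]$, which has deterministic length $(C\log N)/s$, I would use that each type $1$ individual mutates to type $2$ at rate $\mu$ while $X_1(u) \le N$ for all $u$ (since $\sum_k X_k(u) = N$). Conditionally on $\mathcal{F}_{T_1}$, the mutations to type $2$ during $(T_1,\sigma]$ occur at total rate at most $\mu N$, so their number is stochastically dominated by a Poisson random variable with mean $\mu N (C\log N)/s$; in particular $E[M_2(\sigma) - M_2(T_1)] \le \mu N (C\log N)/s$. Markov's inequality then bounds $P\big(M_2(\sigma) - M_2(T_1) > \tfrac{1}{2}\theta^{1/2}(\log N)^{1/2}/s\big)$ by a constant multiple of $\mu N (\log N)^{1/2}/\theta^{1/2}$, which, using $\theta \ge N\mu$, is at most a constant times $(\mu N \log N)^{1/2} \to 0$ by \eqref{muN}. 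Intersecting the two good events gives $M_2(\sigma) \le \theta^{1/2}(\log N)^{1/2}/s$, hence $\tau_2 > \sigma$, with probability tending to $1$.

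I do not expect a genuine obstacle. The one subtlety worth flagging is that the mutations occurring before time $T_1$ must be absorbed into the already-proven bound of Proposition~\ref{NusT1}(3) rather than estimated directly — a naive bound on $E\big[\int_0^{T_1} \mu X_1(u)\,du\big]$ would require controlling the tail of $T_1$ — and that on $(T_1,\sigma]$ one genuinely needs only the crude bound $X_1 \le N$, which is appropriate since $X_1$ can be of order $N$ there once type $1$ begins to sweep. The estimate $\mu N (\log N)^{1/2}/\theta^{1/2} \to 0$ is the single place where hypothesis \eqref{muN} is used in its sharp form.
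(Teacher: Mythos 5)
Your proof is correct and takes essentially the same route as the paper: split $M_2(\sigma) = M_2(T_1) + (M_2(\sigma) - M_2(T_1))$, control the first term via Proposition~\ref{NusT1}(3) together with $\theta\log N \ll 1$ from \eqref{0log}, bound the increment by Markov's inequality using the crude rate bound $\mu X_1 \le \mu N$, and close with $\theta \ge N\mu$ and \eqref{muN}. The only cosmetic difference is the choice of intermediate threshold (you use $\tfrac{1}{2}\theta^{1/2}(\log N)^{1/2}/s$ for the increment, the paper uses $\theta^{3/4}(\log N)^{3/4}/s$, yielding $(\mu N\log N)^{1/2}$ versus $(\mu N\log N)^{1/4}$ tail bounds); both vanish under \eqref{muN}.
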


\begin{proof}
	The number of type 1 individuals at any time is bounded above by $N$, so the rate of mutations to type 2 is bounded above by $N\mu$. 
	It follows that the expected number of mutations to type 2 between times $T_1$ and $T_1 + (C \log N)/s$ is bounded above by $(C \mu N \log N)/s$. 
	By Markov's inequality and the fact that $\theta \geq N\mu$, 
	\begin{align*}
		P\left(M_2\left(T_1+\frac{C\log N}{s}\right) - M_2(T_1) > \frac{\theta^{3/4}(\log N)^{3/4}}{s}\right) 
		\leq \frac{C\mu N\log N}{\theta^{3/4}(\log N)^{3/4}} \leq C(\mu N\log N)^{1/4}.
	\end{align*}
	Since $\mu N\log N\ll 1$, we have
	\begin{equation}\label{4}
		\lim_{N\rightarrow\infty}P\left(M_2\left(T_1+\frac{C\log N}{s}\right) - M_2(T_1) \leq \frac{\theta^{3/4}(\log N)^{3/4}}{s}\right)=1.
	\end{equation}
	By Proposition \ref{NusT1} and (\ref{4}),  
	\begin{equation}\label{5}
		\lim_{N\rightarrow\infty}P\left(M_2\left(T_1+\frac{C\log N}{s}\right)\leq\frac{2\theta^{3/4}(\log N)^{3/4}}{s}\right)=1.
	\end{equation}
	Since $\gamma_N \gg (\theta \log N)^{1/4}$, we have $\theta^{3/4}(\log N)^{3/4}\ll \theta^{1/2}(\log N)^{1/2} \gamma_N = s \beta_1 \gamma_N$.
	Thus, this lemma follows from (\ref{5}) and the definition of $\tau_2$ in (\ref{taukmut}).
\end{proof}

\begin{proposition}\label{Prop2}
	We have
	\begin{equation}\label{T1T1tau2T1}
		\lim_{N\rightarrow\infty}P(T_1<T_{1,\alpha}<\tau_2\leq T^{(1)})=1.
	\end{equation}
	Also, there exists a positive constant $C$ such that
	\begin{equation}\label{alphalogN}
		\lim_{N\rightarrow\infty}P\left(T_{1,\alpha}-T_1\leq \frac{C \log N}{s}\right)=1.
	\end{equation}
\end{proposition}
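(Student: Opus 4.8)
The plan is to obtain both statements of Proposition~\ref{Prop2} by combining the lemmas already proved in this section, with no genuinely new estimates; the only care needed is to keep the constant $C$ consistent across the pieces. Fix the positive constant $C$ provided by Lemma~\ref{PT1'<}. I would first observe that the entire time window $[T_1, T_1 + (C\log N)/s]$ lies strictly before $T^{(1)}$ with high probability: by part~2 of Proposition~\ref{NusT1} we have $\tau_2 \le T^{(1)}$ with probability tending to $1$, and by Lemma~\ref{Ptau2m>T1+clog/s}, applied with this \emph{same} constant $C$, we have $\tau_2 > T_1 + (C\log N)/s$ with probability tending to $1$; intersecting these, $T^{(1)} \ge \tau_2 > T_1 + (C\log N)/s$ with probability tending to $1$.

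Next I would prove \eqref{alphalogN}. On the event of Lemma~\ref{PT1'<} we have $T_{1,\alpha}\wedge T^{(1)} \le T_1 + (C\log N)/s$. Intersecting this with the event just described, the minimum $T_{1,\alpha}\wedge T^{(1)}$ is strictly smaller than $T^{(1)}$, which is possible only if $T_{1,\alpha} < T^{(1)}$, so that $T_{1,\alpha}\wedge T^{(1)} = T_{1,\alpha}$; hence $T_{1,\alpha} - T_1 \le (C\log N)/s$ on an event of probability tending to $1$, which is \eqref{alphalogN} with this same constant $C$.

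For \eqref{T1T1tau2T1} I would check each of the three inequalities on a high-probability event. The bound $T_1 < T_{1,\alpha}$ holds deterministically for all sufficiently large $N$: since $X_1$ moves by jumps of size $1$ we have $X_1(T_1) \le (\log N)/s + 1$, and because $(\log N)/s \ll N$ by \eqref{sN} this is below $\alpha N$, so $X_1$ must still increase after time $T_1$ before it can exceed $\alpha N$. The bound $T_{1,\alpha} < \tau_2$ holds on the intersection of the event from the previous paragraph (on which $T_{1,\alpha} \le T_1 + (C\log N)/s$) with the event of Lemma~\ref{Ptau2m>T1+clog/s} for this same $C$ (on which $\tau_2 > T_1 + (C\log N)/s$). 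The bound $\tau_2 \le T^{(1)}$ holds on the event of part~2 of Proposition~\ref{NusT1}. Intersecting these three events, all of probability tending to $1$, gives $T_1 < T_{1,\alpha} < \tau_2 \le T^{(1)}$ with probability tending to $1$, which is \eqref{T1T1tau2T1}.

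I do not expect a genuine obstacle here: all the analytic work---bounding $X_1$ from below by the branching process $\tilde{Z}_1'$ so that $X_1$ climbs from $(\log N)/s$ to $\alpha N$ within a time of order $(\log N)/s$, and controlling the number $M_2$ of type-$2$ mutations over that window---is already done in Lemmas~\ref{PT1'<} and~\ref{Ptau2m>T1+clog/s}. The one point that must be handled carefully is the bookkeeping: Lemma~\ref{PT1'<} asserts only the existence of \emph{some} constant $C$, so one must feed exactly that $C$ into Lemma~\ref{Ptau2m>T1+clog/s}---which is precisely why that lemma was stated for all positive constants---and one must track which inequalities need to be strict so that the minima involving $T^{(1)}$ collapse as claimed.
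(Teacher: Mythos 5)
Your proposal is correct and takes essentially the same approach as the paper: both combine Lemma \ref{PT1'<}, Lemma \ref{Ptau2m>T1+clog/s} (applied with the same constant $C$), and part~2 of Proposition \ref{NusT1}, and both use the observation that $T_{1,\alpha}\wedge T^{(1)} < T^{(1)}$ forces the minimum to equal $T_{1,\alpha}$. Your write-up is merely more explicit about the bookkeeping (tracking which $C$ is fed where and why $T_1 < T_{1,\alpha}$ holds deterministically), which the paper handles more tersely.
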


\begin{proof}
	By Lemmas \ref{PT1'<} and \ref{Ptau2m>T1+clog/s}, we have $\lim_{N\rightarrow\infty}P(T_{1,\alpha}\wedge T^{(1)}<\tau_2)=1$.
	By part 2 of Proposition~\ref{NusT1}, we have $\lim_{N\rightarrow\infty}P(T_1 < \tau_2\leq T^{(1)})=1$.  Because $T_1 < T_{1,\alpha}$ for sufficiently large $N$ by definition, these two equations imply \eqref{T1T1tau2T1}.  Combining the result $\lim_{N\rightarrow\infty}P(T_{1,\alpha}< T^{(1)})=1$ with Lemma \ref{PT1'<} yields \eqref{alphalogN}.
\end{proof}

\section{Following the process until type 0 vanishes}\label{Endproof}    

In this section, we will prove that after the time $T_{1,\alpha}$ when the number of type 1 individuals reaches $\alpha N$, the type 0 population quickly goes extinct.  In particular, we will show that with probability tending to one as $N \rightarrow \infty$, we have the inequality $0<T_1'-T_{1,\alpha}\leq C'(\log N)/s$ for some positive constant $C'$.
We begin by showing that $T_{1,\alpha} < T_1'$ with probability going to 1 as $N\rightarrow\infty$. 

\begin{lemma}\label{T1'<T1''}
	We have $\lim_{N\rightarrow\infty}P(T_{1,\alpha}<T_1')=1$.
\end{lemma}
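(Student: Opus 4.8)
The plan is to obtain this as a short consequence of Proposition~\ref{Prop2} together with the elementary identity $X_0(t) = N - \sum_{j \ge 1} X_j(t)$. By \eqref{T1T1tau2T1}, the event $E_N := \{T_1 < T_{1,\alpha} < \taumut \le T^{(1)}\}$ has probability tending to one as $N \to \infty$, so it suffices to prove that $T_{1,\alpha} < T_1'$ holds on $E_N$ for all sufficiently large $N$.

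So I would fix $N$ large and work on $E_N$. Every time $t \in [0, T_{1,\alpha}]$ then satisfies $t \le T_{1,\alpha} < T^{(1)} = T_2 \wedge \dots \wedge T_\Delta \wedge \tau_{\Delta+1}$, so by the definitions of the $T_j$ we have $X_j(t) \le (\log N)/s$ for every $j \in \{2, \dots, \Delta\}$, and by \eqref{taudelta} no individual of type $\Delta + 1$ or higher has appeared yet, so $X_j(t) = 0$ for $j \ge \Delta + 1$. For $t < T_{1,\alpha}$ the definition of $T_{1,\alpha}$ gives $X_1(t) \le \alpha N$, and since $X_1$ moves in steps of size one, $X_1(T_{1,\alpha}) \le \alpha N + 1$. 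Combining these bounds,
$$X_0(t) = N - \sum_{j=1}^{\Delta} X_j(t) \ge (1-\alpha)N - 1 - (\Delta - 1)\frac{\log N}{s} \qquad \text{for all } t \in [0, T_{1,\alpha}],$$
and the right-hand side is strictly positive for large $N$ because $\alpha < 1$ and $(\log N)/(Ns) \to 0$ by \eqref{sN}. In particular $X_0(T_{1,\alpha}) > 0$; since the sample paths of $X_0$ are right-continuous and piecewise constant, $X_0$ is constant on some interval $[T_{1,\alpha}, T_{1,\alpha} + \varepsilon)$, so $X_0 > 0$ on all of $[0, T_{1,\alpha} + \varepsilon)$, hence $T_1' > T_{1,\alpha}$ on $E_N$. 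Therefore $P(T_{1,\alpha} < T_1') \ge P(E_N) \to 1$.

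I do not expect a genuine obstacle here: the lemma is essentially a counting argument once Proposition~\ref{Prop2} is available. The only points needing a little care are the treatment of the endpoint $t = T_{1,\alpha}$, where $X_1$ may just have crossed $\alpha N$ (handled by the crude bound $X_1(T_{1,\alpha}) \le \alpha N + 1$), and the observation that the high-probability event from Proposition~\ref{Prop2} controls all of $X_2, \dots, X_\Delta$ and the absence of higher types simultaneously over the whole interval $[0, T_{1,\alpha}]$ — which is automatic, since $T^{(1)}$ and $\tau_{\Delta+1}$ are first-passage/first-appearance times.
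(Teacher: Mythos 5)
Your proposal is correct and is essentially the same argument the paper gives: both use Proposition~\ref{Prop2} to work on the high-probability event $T_{1,\alpha} < T^{(1)}$, bound $X_1(T_{1,\alpha}) \le \alpha N + 1$ and $X_j \le (\log N)/s$ for $2 \le j \le \Delta$ (with no higher types present), and conclude from $X_0 = N - \sum_{j\ge 1} X_j$ that $X_0(T_{1,\alpha}) > 0$. The only cosmetic difference is that you spell out the right-continuity step at the endpoint, which the paper leaves implicit.
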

\begin{proof}
	For $t < T^{(1)}$, we have $X_k(t)\leq (\log N)/s$ for $k=2, 3,..., \Delta$.  Also, since $\tau_{\Delta+1}$ is the time that the first type $\Delta+1$ individual appears when $N$ is sufficiently large by \eqref{taudelta}, there are no individuals of type $\Delta +1$ or above for $t < T^{(1)}$.
	Therefore, as long as $T_{1,\alpha} < T^{(1)}$, we have for sufficiently large $N$,
	\begin{align*}
		X_0(T_{1,\alpha})
		&=N-\sum_{j=1}^{\Delta}X_j(T_{1,\alpha})\geq N- (\alpha N + 1) - (\Delta-1) \cdot \frac{\log N}{s}\\
		&=N\left(1-\alpha-\frac{1}{N} - \frac{(\Delta-1) \log N}{Ns}\right).
	\end{align*}
	Because $(\log N)/(Ns) \rightarrow 0$ as $N \rightarrow \infty$, when $N$ is large enough, $X_0(T_{1,\alpha})>0$ if $T_{1,\alpha} <T^{(1)}$.
	Because $\lim_{N \rightarrow \infty} P(T_{1,\alpha} < T^{(1)}) = 1$ by Proposition \ref{Prop2}, the result follows.
\end{proof}

We now bound the process $X_0$ from above by a branching process.  For $t\geq 0$, we define
\begin{equation}\label{b0^}
	\hat{b}_0(t)=\frac{d_0(t)}{1+s}.
\end{equation} 
We construct a new process $(\bar{W}_0(t),t\geq T_{1,\alpha} \wedge T_1')$ from the population process as follows. 
\begin{enumerate}
	\item Set $\bar{W}_0(T_{1,\alpha} \wedge T_1') = 0$.
	\item The process $\bar{W}_0$ jumps up by 1 at rate $\bar{W}_0\hat{b}_0(t)+X_0(t)(\hat{b}_0(t)-b_0(t))$ for all $t\geq T_{1,\alpha} \wedge T_1'$.  
	\item The process $\bar{W}_0$ jumps down by 1 at rate $\bar{W}_0(t)d_0(t)$ for all $t\geq T_{1,\alpha} \wedge T_1'$. 
\end{enumerate}
Once this process hits 0, it cannot jump down.  Therefore, $\bar{W}_0(t)\geq 0$ for all $t\geq T_{1,\alpha} \wedge T_1'$.  It remains to check that the rate at which the process $\bar{W}_0$ jumps up by 1 is non-negative, which follows from the lemma below.

\begin{lemma} 
	For $t\geq 0$, we have $b_0(t)\leq \hat{b}_0(t)$.
\end{lemma}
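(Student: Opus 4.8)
The plan is to unfold the definitions and reduce the assertion to the elementary fact that every individual of type $1$ or higher has fitness at least $1+s$. From \eqref{b} and \eqref{d} with $k = 0$ we have $b_0(t) = (N - X_0(t))/S(t)$ and $d_0(t) = 1 - X_0(t)/S(t) + \mu$, so in view of \eqref{b0^} and the fact that $S(t) \ge N > 0$, the claim $b_0(t) \le \hat{b}_0(t)$ is equivalent, after multiplying through by $(1+s)S(t)$, to
$$
(1+s)\bigl(N - X_0(t)\bigr) \;\le\; S(t) - X_0(t) + \mu S(t).
$$

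Since $\mu S(t) \ge 0$, it suffices to prove $(1+s)(N - X_0(t)) \le S(t) - X_0(t)$. But
$$
S(t) - X_0(t) = \sum_{k=1}^{\infty} (1+s)^k X_k(t) \;\ge\; (1+s)\sum_{k=1}^{\infty} X_k(t) = (1+s)\bigl(N - X_0(t)\bigr),
$$
using that $(1+s)^k \ge 1+s$ for every $k \ge 1$ and that $\sum_{k=0}^{\infty} X_k(t) = N$. This proves the lemma, and I do not expect any genuine obstacle; the only point worth a remark is the degenerate case $X_0(t) = N$, in which both sides of the displayed inequalities vanish (apart from the harmless term $\mu S(t)/(1+s)$) and the inequality holds trivially.

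This lemma plays the same role for the process $\bar{W}_0$ that part 1 of Lemma \ref{b^>b} plays for the processes $\barYbTk$: it guarantees that the prescribed jump-up rate $\bar{W}_0(t)\hat{b}_0(t) + X_0(t)(\hat{b}_0(t) - b_0(t))$ is nonnegative, so that $W_0 := X_0 + \bar{W}_0$ is a well-defined birth--death process dominating $X_0$, in which each individual is born at rate $\hat{b}_0(t) = d_0(t)/(1+s)$ and dies at rate $d_0(t)$. After the time change $\lambda_0(t) = \int_0^t d_0(v)\,dv$, this becomes a subcritical branching process with per-individual birth rate $1/(1+s) < 1$ and death rate $1$, which is what will ultimately let us show that the type $0$ population dies out quickly after time $T_{1,\alpha}$.
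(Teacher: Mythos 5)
Your proof is correct and follows essentially the same route as the paper's: both drop the $\mu$ term from $d_0(t)$ and then use that $(1+s)^j \ge 1+s$ for $j \ge 1$ (the paper writes this as $(1+s)^{j-1} \ge 1$ after dividing by $1+s$, but it is the same observation). The surrounding remarks about the role of the lemma in the construction of $\bar W_0$ also match the paper.
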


\begin{proof}
	By the definitions of $b_0(t)$ and $d_0(t)$ in (\ref{b}) and (\ref{d}), we have that for all $t\geq 0$,
	\begin{equation*}
		\hat{b}_0(t) =\frac{d_0(t)}{1+s} \geq \frac{1}{1+s}\left(1-\frac{X_0(t)}{S(t)}\right) = \frac{\sum_{j=1}^{\infty}(1+s)^{j-1}X_j(t)}{S(t)}
	\end{equation*}
	and 
	\begin{equation*}
		b_0(t)=\frac{N-X_0(t)}{S(t)}=\frac{\sum_{j=1}^{\infty}X_j(t)}{S(t)}.
	\end{equation*}
	Hence, $b_0(t)\leq \hat{b}_0(t)$ for all $t\geq 0$.
\end{proof}

Let $W_0(t)=X_0(t)+\bar{W}_0(t)$ for $t\geq T_{1,\alpha} \wedge T_1'$.  Clearly, $W_0(t)\geq X_0(t)$ for all $t\geq T_{1,\alpha} \wedge T_1'$.  Also, $W_0$ is a birth-death process in which, for $t\geq T_{1,\alpha} \wedge T_1'$, a birth occurs at rate $W_0(t)\hat{b}_0(t)$ and a death occurs at rate $W_0(t)d_0(t)$ for $t\geq T_{1,\alpha} \wedge T_1'$.  Next, for $t\geq T_{1,\alpha} \wedge T_1'$, define
\begin{equation}\label{lambda0}
	\lambda_0(t)=\int_{T_{1,\alpha} \wedge T_1'}^t d_0(v) \: dv.
\end{equation}
Then, we define $\tilde{W}_0(t)=W_0(\lambda_0^{-1}(t))$ for $t\geq 0$.  It follows that $\tilde{W}_0$ is a subcritical branching process in which each individual gives birth at rate $1/(1 + s)$ and dies at rate $1$. 

We define
\begin{equation}\label{tau}
	\tau=\inf\{t\geq T_{1,\alpha} \wedge T_1' : W_0(t)=0\}.
\end{equation}
and
\begin{equation}\label{tau'}
	\tau'=\inf\left\{t\geq T_{1,\alpha} \wedge T_1' : W_0(t)>\left(1-\frac{\alpha}{2}\right)N\right\}.
\end{equation}

\begin{lemma}\label{tau'=infty}
	We have $\lim_{N\rightarrow\infty} P(\tau'=\infty)=1$.
\end{lemma}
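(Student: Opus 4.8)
The plan is to control the upper‑bounding branching process $\tilde W_0$ directly: I will show that, with probability tending to one, $\tilde W_0$ never reaches $(1-\alpha/2)N$, by exhibiting a nonnegative martingale built from $\tilde W_0$ and applying a maximal inequality. First I would reduce the assertion to a statement about $\tilde W_0$. Since $d_0(t)=1-X_0(t)/S(t)+\mu\geq \mu>0$ for all $t$, the map $\lambda_0$ in \eqref{lambda0} is a strictly increasing bijection of $[T_{1,\alpha}\wedge T_1',\infty)$ onto $[0,\infty)$, so $\tau'=\infty$ if and only if $\sup_{u\geq 0}\tilde W_0(u)\leq\big(1-\tfrac{\alpha}{2}\big)N$. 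Next I identify the initial value: because $\bar W_0(T_{1,\alpha}\wedge T_1')=0$, we have $\tilde W_0(0)=W_0(T_{1,\alpha}\wedge T_1')=X_0(T_{1,\alpha}\wedge T_1')=:n_0$. On the event $\{T_{1,\alpha}<T_1'\}$ we have $n_0=X_0(T_{1,\alpha})=N-\sum_{k\geq 1}X_k(T_{1,\alpha})\leq N-X_1(T_{1,\alpha})<(1-\alpha)N$, since $X_1(T_{1,\alpha})>\alpha N$ by the definition of $T_{1,\alpha}$; on the complementary event $\{T_1'\leq T_{1,\alpha}\}$ we have $n_0=X_0(T_1')=0$ and $\tilde W_0\equiv 0$, so $\tau'=\infty$ trivially. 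In particular $P(n_0\geq(1-\alpha)N)\leq P(T_1'\leq T_{1,\alpha})\to 0$ by Lemma \ref{T1'<T1''}, so it suffices to bound $P\big(\tau'<\infty,\ n_0<(1-\alpha)N\big)$.

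The heart of the argument is the following classical fact. Conditionally on $\mathcal G:=\mathcal F_{T_{1,\alpha}\wedge T_1'}$, the process $\tilde W_0$ is a continuous‑time branching process in which each individual gives birth at rate $1/(1+s)$ and dies at rate $1$, started from $n_0$ individuals. The infinitesimal generating function of a single such family is $u(z)=\tfrac{1}{1+s}(z-1)\big(z-(1+s)\big)$, which vanishes at $z=1+s$; hence the one‑particle generating function satisfies $E\big[(1+s)^{Z(t)}\mid Z(0)=1\big]=1+s$ for all $t$, and therefore, taking the product over the $n_0$ independent families, $\big((1+s)^{\tilde W_0(t)}\big)_{t\geq 0}$ is a nonnegative $\mathcal G$‑conditional martingale with initial value $(1+s)^{n_0}$. (Equivalently, one checks directly that the generator of the birth--death process $W_0$ with per‑capita rates $\hat b_0(t)=d_0(t)/(1+s)$ and $d_0(t)$ annihilates $w\mapsto(1+s)^w$.)

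Now I would apply the maximal inequality for nonnegative supermartingales: for any $\ell>0$,
\[
P\Big(\sup_{u\geq 0}(1+s)^{\tilde W_0(u)}\geq \ell \,\Big|\, \mathcal G\Big)\ \leq\ \frac{(1+s)^{n_0}}{\ell}.
\]
Taking $\ell=(1+s)^{\lfloor(1-\alpha/2)N\rfloor+1}$ and using that $\tilde W_0$ is integer‑valued, so that $\{\tau'<\infty\}=\{\sup_{u\geq 0}\tilde W_0(u)\geq\lfloor(1-\alpha/2)N\rfloor+1\}$, we get on the event $\{n_0<(1-\alpha)N\}$,
\[
P\big(\tau'<\infty\,\big|\,\mathcal G\big)\ \leq\ (1+s)^{n_0-\lfloor(1-\alpha/2)N\rfloor-1}\ \leq\ (1+s)^{-\alpha N/2}.
\]
Since $s\to 0$ we have $\log(1+s)\geq s/2$ for large $N$, so $(1+s)^{-\alpha N/2}\leq e^{-\alpha Ns/4}$, and $Ns\gg N^{1-\eta}\to\infty$ by \eqref{sN}; hence $P\big(\tau'<\infty,\ n_0<(1-\alpha)N\big)\to 0$. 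Combining this with $P(n_0\geq(1-\alpha)N)\to 0$ yields $P(\tau'<\infty)\to 0$, i.e.\ $P(\tau'=\infty)\to 1$.

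I expect the only genuinely delicate point to be the rigorous justification of the martingale property of $(1+s)^{\tilde W_0(t)}$ — in particular its integrability, which is exactly what the identity $u(1+s)=0$ gives (the one‑particle generating function is constant along the value $1+s$) — together with the bookkeeping of the conditioning on $\mathcal G$, which legitimately turns $\tilde W_0$ into an honest branching process with the stated deterministic rates, as established in the construction preceding the lemma. Everything else reduces to the elementary estimates on $n_0$ and on $Ns$ recorded above.
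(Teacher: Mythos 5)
Your proof is correct and takes essentially the same approach as the paper: the paper passes to the embedded discrete-time asymmetric random walk of $\tilde W_0$ and applies the classical gambler's ruin formula (and then notes that hitting $0$ first forces $\tau'=\infty$ since $W_0$ is absorbed at $0$), whereas you work with the exponential martingale $(1+s)^{\tilde W_0(t)}$ and Doob's maximal inequality directly, but these are the same estimate via optional stopping on the same martingale. One small redundancy: since $n_0 = X_0(T_{1,\alpha}\wedge T_1')$ equals $0$ on $\{T_1'\leq T_{1,\alpha}\}$ and is $<(1-\alpha)N$ on $\{T_{1,\alpha}<T_1'\}$, the event $\{n_0\geq(1-\alpha)N\}$ is actually empty, so the appeal to Lemma \ref{T1'<T1''} is unnecessary (the paper simply observes $\tilde W_0(0)\leq(1-\alpha)N$ in both cases).
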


\begin{proof}
	Consider the branching process $\tilde{W}_0$.  Since each individual in this process gives birth at rate $1/(1 + s)$ and dies at rate $1$, we know that at any time, the next event is a birth with probability $1/(2 +s)$ and a death with probability $(1 + s)/(2 + s)$.  Therefore, if we evaluate the process $\tilde{W}_0$ at the time of each birth or death event, we obtain an asymmetric random walk.  Note that if $T_{1,\alpha}<T_1'$, then 
	\begin{equation*}
		\tilde{W}_0(0)=W_0(T_{1,\alpha})=X_0(T_{1,\alpha})\leq N-X_1(T_{1,\alpha})<(1-\alpha)N.
	\end{equation*} 
	Also, if $T_1' \leq T_{1,\alpha}$, then $\tilde{W}_0(0) = 0$.  Thus, in both cases, $\tilde{W}_0(0)\leq (1-\alpha)N$. 
	
	Given that $k\leq (1-\alpha)N$ and there are $k$ individuals of type $0$ at time $T_{1,\alpha} \wedge T_1'$, the probability that this random walk reaches $0$ before $\lfloor(1-\frac{\alpha}{2})N\rfloor+1$ is
	\begin{align*}
		1-\frac{(1+s)^k-1}{(1+s)^{\lfloor(1-\frac{\alpha}{2})N\rfloor+1}-1}
		&\geq 1-(1+s)^{k-\lfloor(1-\frac{\alpha}{2})N\rfloor-1}\\
		&\geq 1-(1 + s)^{(1-\alpha)N-\lfloor(1-\frac{\alpha}{2})N\rfloor-1}\\
		&\geq 1- (1+s)^{-N\alpha/2}.
	\end{align*}
	The bound on the right-hand side does not depend on $k$.  Therefore, given that $\tilde{W}_0(0)\leq (1-\alpha)N$, the probability that $\tilde{W}_0$ hits $0$ before $\lfloor(1-\frac{\alpha}{2})N\rfloor+1$ is bounded from below by $1- (1+s)^{-N\alpha/2}$.
	By the definitions of $\tau$ and $\tau'$ in \eqref{tau} and \eqref{tau'}, we have \begin{equation*}
		P(\tau<\tau' \,|\, \tilde{W}_0(0)\leq (1-\alpha)N)\geq 1- (1+s)^{-N \alpha/2}.
	\end{equation*}
	Therefore,
	\begin{equation}\label{6}
		P(\tau<\tau') \geq \left(1- (1+s)^{-N\alpha/2}\right)P(\tilde{W}_0(0)\leq (1-\alpha)N).
	\end{equation}
	Since $s\rightarrow 0$ as $N\rightarrow\infty$ and $Ns\alpha/2 \rightarrow\infty$ as $N\rightarrow\infty$, we have $(1+s)^{1/s}\rightarrow e$ and $(1+s)^{-N \alpha/2}=[(1+s)^{1/s}]^{-Ns\alpha/2}\rightarrow 0$ as $N\rightarrow\infty$.
	Thus, from (\ref{6}), $\lim_{N\rightarrow\infty}P(\tau<\tau')=1$.  Lastly, note that after time $\tau$, the process $W_0$ will stay at $0$ forever. Hence, $\tau<\tau'$ implies that $\tau'=\infty$.
\end{proof}

\begin{lemma}\label{T1''<T1'+}
	We have 
	\begin{equation*}
		\lim_{N\rightarrow\infty}P\left(T_1'\leq T_{1,\alpha}+\frac{4(1+s)\log N}{\alpha s}\right)=1.
	\end{equation*}
\end{lemma}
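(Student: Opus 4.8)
The plan is to use the upper bound $W_0(t)\ge X_0(t)$ and the fact that, after the time change by $\lambda_0$, the dominating process $\tilde{W}_0$ is a subcritical branching process that dies out in changed time $O((\log N)/s)$; the only work is to check that the time change distorts real time by at most a bounded factor. If $T_1'\le T_{1,\alpha}$ the claimed inequality is automatic, so by Lemma \ref{T1'<T1''} it is enough to argue on the event $\{T_{1,\alpha}<T_1'\}$, on which $T_{1,\alpha}\wedge T_1'=T_{1,\alpha}$ and $W_0$ is started at time $T_{1,\alpha}$ from $W_0(T_{1,\alpha})=X_0(T_{1,\alpha})<(1-\alpha)N$. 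Since $W_0(t)\ge X_0(t)$ for all $t\ge T_{1,\alpha}$, the time $\tau$ of \eqref{tau} satisfies $X_0(\tau)=0$, hence $T_1'\le\tau$; thus it suffices to bound $\tau-T_{1,\alpha}$.

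First I would control the time change. On the event $\{\tau'=\infty\}$, which has probability tending to one by Lemma \ref{tau'=infty}, we have $W_0(t)\le(1-\alpha/2)N$ for all $t\ge T_{1,\alpha}$, so $X_0(t)/S(t)\le X_0(t)/N\le 1-\alpha/2$ and therefore $d_0(t)\ge\alpha/2$ for all $t\in[T_{1,\alpha},\tau]$. Writing $\tilde\tau:=\lambda_0(\tau)$ for the extinction time of $\tilde W_0$ and using \eqref{lambda0}, this gives
\[
\tilde\tau=\int_{T_{1,\alpha}}^{\tau}d_0(v)\,dv\ \ge\ \frac{\alpha}{2}\,(\tau-T_{1,\alpha}),
\qquad\text{i.e.}\qquad
\tau-T_{1,\alpha}\ \le\ \frac{2}{\alpha}\,\tilde\tau .
\]

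Next I would estimate $\tilde\tau$. The process $\tilde W_0$ is a branching process in which each individual gives birth at rate $1/(1+s)$ and dies at rate $1$, so by Lemma \ref{bpvar} its mean growth rate is $u'(1)=\frac{1}{1+s}-1=-\frac{s}{1+s}$, and since $\tilde W_0(0)=X_0(T_{1,\alpha})\le N$ we get $E[\tilde W_0(t)]\le N e^{-st/(1+s)}$. Hence by Markov's inequality
\[
P(\tilde\tau>t)=P(\tilde W_0(t)\ge 1)\le E[\tilde W_0(t)]\le N e^{-st/(1+s)},
\]
and taking $t=2(1+s)(\log N)/s$ makes the right-hand side $N\cdot N^{-2}=1/N\to 0$.

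Finally, on the intersection $\{\tau'=\infty\}\cap\{\tilde\tau\le 2(1+s)(\log N)/s\}$, which has probability tending to one, either $T_1'\le T_{1,\alpha}$ or
\[
T_1'\ \le\ \tau\ \le\ T_{1,\alpha}+\frac{2}{\alpha}\,\tilde\tau\ \le\ T_{1,\alpha}+\frac{4(1+s)\log N}{\alpha s},
\]
which is the desired bound. The step I expect to be the (mild) crux is the first one: translating the branching-process extinction time $\tilde\tau$ into a bound on the real-time duration $\tau-T_{1,\alpha}$, since this requires the pointwise lower bound $d_0\ge\alpha/2$, and that bound is exactly what Lemma \ref{tau'=infty} (through $\{\tau'=\infty\}$) provides; the remaining estimates are a routine first-moment calculation for a subcritical branching process.
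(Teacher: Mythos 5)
Your proposal is correct and follows essentially the same route as the paper: dominate $X_0$ by $W_0$, apply the first-moment Markov bound to the time-changed subcritical branching process $\tilde W_0$ to get extinction within $2(1+s)(\log N)/s$ of transformed time with probability tending to one, and then invoke Lemma~\ref{tau'=infty} to guarantee $d_0 \geq \alpha/2$ so the time change distorts by at most a factor $2/\alpha$. The only cosmetic difference is that you phrase the time-change comparison via $\tilde\tau = \lambda_0(\tau)$ while the paper phrases it via $\lambda_0^{-1}(t_1)$, but the estimates and logic are identical.
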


\begin{proof}
	Define $\tau$ and $\tau'$ as in \eqref{tau} and \eqref{tau'}.  Since $X_0(t)\leq W_0(t)$ for all $t\geq T_{1,\alpha}\wedge T_1'$, the process $X_0$ must reach $0$ before or at the same time the process $W_0$ does, which implies that $T'_1 \leq \tau$.  It is therefore enough to show that 
	\begin{equation}\label{T1alphaeq}
		\lim_{N\rightarrow\infty}P\left(\tau\leq (T_{1,\alpha}\wedge T_1') + \frac{4(1+s)\log N}{\alpha s}\right)=1.
	\end{equation}
	
	Consider the process $\tilde{W}_0$, which is a branching process in which each individual gives birth at rate $1/(1+s)$ and dies at rate $1$.  For all $t\geq 0$
	\begin{equation*}
		E[\tilde{W}_0(t)|\tilde{W}_0(0)]=\tilde{W}_0(0)e^{(\frac{1}{1+s}-1)t}=\tilde{W}_0(0)e^{-\frac{st}{1+s}}.
	\end{equation*}
	Since $\tilde{W}_0(0)\leq N$, we have $E[\tilde{W}_0(t)]\leq Ne^{-\frac{st}{1+s}}$. 
	Let $t_1=\frac{2(1+s)\log N}{s}$.  Then $E[\tilde{W}_0(t_1)] \leq 1/N$.  By Markov's inequality,
	\begin{equation*}
		P(\tilde{W}_0(t_1)=0)=1-P(\tilde{W}_0(t_1)\geq 1)\geq 1-\frac{1}{N}.
	\end{equation*}
	Hence, $\lim_{N\rightarrow\infty}P(\tilde{W}_0(t_1)=0)=1$.  Because $\tau$ is the first time that $W_0$ hits $0$, we have $\lim_{N\rightarrow\infty}P(\tau\leq \lambda_0^{-1}(t_1))=1$. 
	By Lemma \ref{tau'=infty}, we have $\lim_{N\rightarrow\infty}P(\tau\leq \lambda_0^{-1}(t_1)<\tau')=1$.
	
	Lastly, we will show that if $\lambda_0^{-1}(t_1)<\tau'$, then $\lambda_0^{-1}(t_1)\leq (T_{1,\alpha}\wedge T_1') +2t_1/\alpha$, which will imply \eqref{T1alphaeq}.  By the definitions of $d_0$ and $\lambda_0$,
	for all $t\geq T_{1,\alpha}\wedge T_1'$, we have 
	\begin{equation*}
		\lambda_0(t)=\int_{T_{1,\alpha}\wedge T_1'}^{t}\left(1-\frac{X_0(v)}{S(v)}+\mu\right)dv\geq \int_{T_{1,\alpha}\wedge T_1'}^{t}\left(1-\frac{X_0(v)}{N}\right) dv.
	\end{equation*}
	By the definition of $\tau'$ in (\ref{tau'}), if $T_{1,\alpha}\wedge T_1'\leq v < \tau'$, we have $X_0(v)\leq (1-\frac{\alpha}{2})N$.  Hence, when $T_{1,\alpha}\wedge T_1'\leq t \leq \tau'$,
	\begin{equation*}
		\lambda_0(t)\geq \int_{T_{1,\alpha}\wedge T_1'}^{t}\left(1-\frac{X_0(v)}{N}\right)dv\geq \frac{\alpha}{2}(t-(T_{1,\alpha}\wedge T_1')).
	\end{equation*}
	Therefore, if $\lambda_0^{-1}(t_1)<\tau'$, then
	\begin{equation*}
		t_1=\lambda_0(\lambda_0^{-1}(t_1))\geq \frac{\alpha}{2}(\lambda_0^{-1}(t_1)-(T_{1,\alpha}\wedge T_1')),
	\end{equation*}
	which implies that $\lambda_0^{-1}(t_1)\leq (T_{1,\alpha}\wedge T_1') + 2t_1/\alpha$.
\end{proof} 

\begin{proof}[Proof of Lemma \ref{mainlem}]
	Part 1 of Lemma \ref{mainlem} is part 1 of Proposition \ref{NusT1}.  Part~2 of Lemma \ref{mainlem} follows from \eqref{alphalogN} and Lemmas \ref{T1'<T1''} and \ref{T1''<T1'+}.  To prove parts~3 and 4 of Lemma~\ref{mainlem}, it suffices to show that $\lim_{N \rightarrow \infty} P(T_1' < T^{(1)}) = 1$.  This result holds because $\lim_{N \rightarrow \infty} P(\tau_2 \leq T^{(1)}) = 1$ by Lemma \ref{PT1<...} and $\lim_{N \rightarrow \infty} P(T_1' < \tau_2) = 1$ by Lemma \ref{Ptau2m>T1+clog/s} and part 2 of Lemma \ref{mainlem}.  Finally, to prove part 5 of Lemma \ref{mainlem}, it is enough to show that $\lim_{N \rightarrow \infty} P(T_1' < T_k^*) = 1$ for $2 \leq k \leq \Delta$.  However, we have already seen that $\lim_{N \rightarrow \infty} P(T_1' < \tau_2) = 1$, and Lemmas \ref{PTk>taukm} and \ref{PtaukmINC} imply that $\lim_{N \rightarrow \infty} P(\tau_2 \leq \tau_k \leq T_k^*) = 1$.
\end{proof}

\noindent {\bf Acknowledgments}

\bigskip
\noindent The authors thank Anton Wakolbinger for feedback which improved the exposition of the paper.  They also thank two referees for helpful comments.


\begin{thebibliography}{99}
\bibitem{bgpw19} E. Baake, A. Gonz\'alez Casanova, S. Probst, and A. Wakolbinger (2019).  Modelling and simulating Lenski’s long-term evolution experiment.  {\it Theor. Pop. Biol.} {\bf 127}, 58-74.

\bibitem{brw08} \'E. Brunet, I. M. Rouzine, and C. O. Wilke (2008).  The stochastic edge in adaptive evolution.  {\it Genetics} {\bf 179}, 603-620.

\bibitem{df07} M. M. Desai and D. S. Fisher (2007).  Beneficial mutation-selection balance and the effect of linkage on positive selection.  {\it Genetics} {\bf 176}, 1759-1798.

\bibitem{dwf13} M. M. Desai, A. M. Walczak, and D. S. Fisher (2013).  Genetic diversity and the structure of genealogies in rapidly adapting populations.  {\it Genetics} {\bf 193}, 565-585.

\bibitem{durr08} R. Durrett (2008).  {\it Probability Models for DNA Sequence Evolution}.  2nd ed.  Springer, New York.

\bibitem{dm11} R. Durrett and J. Mayberry (2011).  Traveling waves of selective sweeps.  {\it Ann. Appl. Probab.} {\bf 21}, 699-744.

\bibitem{f13} D. S. Fisher (2013).  Asexual evolution waves: fluctuations and universality.  {\it Journal of Statistical Mechanics: Theory and Experiment}, P01011.

\bibitem{gr92} V. G. Gadag and M. B. Rajarshi (1992).  On processes associated with a super-critical Markov branching process.  {\it Serdica.} {\bf 18}, 173-178.

\bibitem{gl98} P. J. Gerrish and R. E. Lenski (1998).  The fate of competing beneficial mutations in an asexual population.  {\it Genetica} {\bf 102/103}, 127-144.

\bibitem{ghstw} F. Hermann, A. Gonz\'alez Casanova, R. Soares dos Santos, A. Tobi\'as, and A. Wakolbinger.  From clonal interference to Poissonian interacting trajectories.  ArXiv preprint 2407.00793.

\bibitem{gkwy16} A. Gonz\'alez Casanova, N. Kurt, A. Wakolbinger, and L. Yuan (2016).  An individual-based model for the Lenski experiment, and the deceleration of the relative fitness.  {\it Stochastic Process. Appl.} {\bf 126}, 2211-2252.

\bibitem{grbhd} B. H. Good, I. M. Rouzine, D. J. Balick, O. Hallatschek, and M. M. Desai (2012).  Distribution of fixed beneficial mutations and the rate of adaptation in asexual populations.  {\it Proc. Natl. Acad. Sci. USA} {\bf 109}, 4950-4955.

\bibitem{gwnd} B. H. Good, A. M. Walczak, R. A. Neher, and M. M. Desai (2014).  Genetic diversity in the interference selection limit.  {\it PLOS Genetics} {\bf 10}, e1004222.

\bibitem{harris63} T. E. Harris (1963).  {\it The Theory of Branching Processes}.  Springer-Verlag, Berlin.

\bibitem{k13} M. Kelly (2013).  Upper bound on the rate of adaptation in an asexual population.  {\it Ann. Appl. Probab.} {\bf 23}, 1377-1408.

\bibitem{ko69} M. Kimura and T. Ohta (1969).  The average number of generations until the fixation of a mutant gene in a finite population.  {\it Genetics} {\bf 61}, 763-771.

\bibitem{ls21} J. Liu and J. Schweinsberg (2021).  Particle configurations for branching Brownian motion with an inhomogeneous branching rate.  {\it ALEA Lat. Am. J. Probab. Math. Stat.} {\bf 20}, 731-803.

\bibitem{mgfd21} M. J. Melissa, B. H. Good, D. S. Fisher, and M. M. Desai (2022).  Population genetics of polymorphism and divergence in rapidly evolving populations.  {\it Genetics} {\bf 221}, iyac053.

\bibitem{nh13} R. A. Neher and O. Hallatschek (2013).  Genealogies in rapidly adapting populations.  {\it Proc. Natl. Acad. Sci. USA} {\bf 110}, 437-442.

\bibitem{rs20} M. I. Roberts and J. Schweinsberg (2020).  A Gaussian particle distribution for branching Brownian motion with an inhomogeneous branching rate.  {\it Electron. J. Probab.} {\bf 26}, no. 103, 1-76.

\bibitem{rbw08} I. M. Rouzine, \'E. Brunet, and C. O. Wilke (2008).  The traveling-wave approach to asexual evolution: Muller's ratchet and speed of adaptation.  {\it Theor. Pop. Biol} {\bf 73}, 24-46.

\bibitem{sch17a} J. Schweinsberg (2017).  Rigorous results for a population model with selection I: evolution of the fitness distribution.  {\it Electron. J. Probab.} {\bf 22}, no. 37, 1-94.

\bibitem{sch17b} J. Schweinsberg (2017).  Rigorous results for a population model with selection II: genealogy of the population.  {\it Electron. J. Probab.} {\bf 22}, no. 38, 1-54.

\bibitem{yec10} F. Yu, A. Etheridge, and C. Cuthbertson (2010).  Asymptotic behavior of the rate of adaptation.  {\it Ann. Appl. Probab.} {\bf 20}, 978-1004.
\end{thebibliography}
\end{document}